\pgfplotsset{compat=newest}
\theoremstyle:=definition,remark,plain\do{
        \expandafter\g@addto@macro\csname th@\theoremstyle\endcsname{
            \addtolength\thm@preskip\parskip
            }
        }
\newcommand{\ZZ}{\mathbb{Z}}
\newcommand{\NN}{\mathbb{N}}
\newcommand{\cG}{\mathcal{G}}
\newcommand{\cA}{\mathcal{A}}
\newcommand{\cB}{\mathcal{B}}
\newcommand{\cS}{\mathcal{S}}
\newcommand{\cC}{\mathcal{C}}
\newcommand{\cL}{\mathcal{L}}
\newcommand{\cV}{\mathcal{V}}
\newcommand{\cE}{\mathcal{E}}
\newcommand{\cH}{\mathcal{H}}
\DeclareMathOperator{\St}{St}
\DeclareMathOperator{\Sch}{Sch}
\DeclareMathOperator{\Cay}{Cay}
\DeclareMathOperator{\lcm}{lcm}
\DeclareMathOperator{\aut}{Aut}
\DeclareMathOperator{\im}{im}
\DeclareMathOperator{\adj}{Adj}
\DeclareMathOperator{\ev}{ev}
\newcommand{\Con}{\mathfrak{B}}
\newcommand{\remark}[1]{{\color{red}}}
\DeclareRobustCommand{\Arrow}[1][]{
\check@mathfonts
\if\relax\detokenize{#1}\relax
\settowidth{\dimen@}{$\m@th\rightarrow$}
\else
\setlength{\dimen@}{#1}
\fi
\sbox\z@{\usefont{U}{lasy}{m}{n}\symbol{41}}
\begin{picture}(\dimen@,\ht\z@)
\roundcap
\put(\dimexpr\dimen@-.7\wd\z@,0){\usebox\z@}
\put(0,\fontdimen22\textfont2){\line(1,0){\dimen@}}
\end{picture}
}
\newcommand{\veryshortrightarrow}{\hspace{.05mm}\scalebox{1.5}{\Arrow[.3cm]}\hspace{.05mm}}
\newcommand{\rto}{{\color{red}\veryshortrightarrow}}
\newcommand{\cto}{{\color{cyan}{\veryshortrightarrow}}}
\newcommand{\tikzproductgraph}[3]{
	\begin{tikzpicture}[scale=1.5]
        
            \tikzstyle{every node}=[font=\small];
        	\pgfmathsetmacro{\p}{#1};            
			\pgfmathsetmacro{\q}{#2} ;           
			\pgfmathsetmacro{\pm}{\p-1};            
			\pgfmathsetmacro{\pmm}{\p-2};            
			\pgfmathsetmacro{\pmmm}{\p-3};            
			\pgfmathsetmacro{\qm}{\q-1} ;           
			\pgfmathsetmacro{\qmm}{\q-2}  ;  
			\pgfmathsetmacro{\dnne}{0}  ;  
			
			\pgfmathsetmacro{\numarms}{#3}        
 
            \foreach \j in {0,...,\pm}
            {
                \foreach \i in {0,...,\qm}
                {
                    \ifnum \j=0
                    \ifnum \i=0
                    \ifnum \numarms=1
                        \draw[fill] (\i,\j) circle [radius=0pt];
                        \node[above] at (\i,\j) {${v_-}$};
			            \pgfmathsetmacro{\dnne}{1}  ;  
                    \fi
                    \ifnum \numarms=2
                        \draw[fill] (\i,\j) circle [radius=0pt];
                        \node[above] at (\i,\j) {${v_+}$};
			            \pgfmathsetmacro{\dnne}{1}  ;  
                    \fi
                    \fi
                    \fi
                    
                    \ifnum \j=\pm
                    \ifnum \i=0
                    \ifnum \numarms>1
                        \draw[fill] (\i,\j) circle [radius=0pt];
                        \node[above] at (\i,\j) {${u_+}$};
			            \pgfmathsetmacro{\dnne}{1}  ;  
                    \fi
                    \fi
                    \fi
                   
                    \ifnum \dnne=0
                    \pgfmathsetmacro{\ip}{\i+1}
                        \draw[fill] (\i,\j) circle [radius=0pt];
                        \node[above] at (\i,\j) {${(\i,\j)}$};
                    \fi
                }
            }
            
            \ifnum \numarms=1
                \draw[fill] (0,-1) circle [radius=0pt];
                \node[below] at (0,-1) {$v_+$};            
           	\fi
            \ifnum \numarms=2
                \draw[fill] (0,-1) circle [radius=0pt];
                \node[below] at (0,-1) {$v_-$};   
                \draw[fill] (0,\p) circle [radius=0pt];
                \node[above] at (0,\p) {$u_-$};         
            \fi

            \foreach \j in {0,...,\pm}
            {
                \foreach \i in {0,...,\qmm}
                {
                    \pgfmathsetmacro{\ip}{\i+1}
				    \draw[->, red] (\i,\j) -- (\ip,\j);
                }
                \draw[fill] (\qm,\j) circle [radius=0pt];
                \draw[->, red, out=150, in=30] (\qm,\j) to (0,\j);
            }

            \ifnum \numarms=0
	            \ifnum \p=2
	                \draw[->, cyan, out=160,in=20] (1,\pm) to (0,1);	        
	            \else
	            	\foreach \j in {1,...,\pmm}
	                {
	                    \pgfmathsetmacro{\jp}{\j+1};
	                    \draw[->, cyan] (0,\j) to (0,\jp);
	                }
	                \draw[->, cyan] (1,\pm) to (0,1);	          
	            \fi
	            \draw[->, cyan, out=-20, in=-160] (0,\pm) to (1,\pm);
	            
	            \ifnum \p=2
	                \draw[->, cyan, out=160,in=20] (1,\pmm) to (0,0);	        
	            \else
	            	\foreach \j in {0,...,\pmmm}
	                {
	                    \pgfmathsetmacro{\jp}{\j+1};
	                    \draw[->, cyan] (1,\j) to (1,\jp);
	                }
	                \draw[->, cyan] (1,\pmm) to (0,0);
	            \fi
	            \draw[->, cyan, out=-20, in=-160] (0,0) to (1,0);
	        \fi   
	        
            \ifnum \numarms=1
	            \ifnum \p=2
	                \draw[->, cyan, out=160,in=20] (1,\pm) to (0,1);	        
	            \else
	            	\foreach \j in {1,...,\pmm}
	                {
	                    \pgfmathsetmacro{\jp}{\j+1};
	                    \draw[->, cyan] (0,\j) to (0,\jp);
	                }
	                \draw[->, cyan] (1,\pm) to (0,1);	          
	            \fi
	            \draw[->, cyan, out=-20, in=-160] (0,\pm) to (1,\pm);
	            
	            \ifnum \p=2
	                \draw[->, cyan, out=65,in=-155] (0,-1) to (1,0);
	            \else
	            	\foreach \j in {0,...,\pmmm}
	                {
	                    \pgfmathsetmacro{\jp}{\j+1};
	                    \draw[->, cyan] (1,\j) to (1,\jp);
	                }
	                \draw[->, cyan] (0,-1) to (1,0);
	            \fi
	            \draw[->, cyan] (1,\pmm) to (0,-1);
	        \fi        
            
            \ifnum \numarms=2
	            \ifnum \p=2
	                \draw[->, cyan, out=155,in=-65] (1,\pm) to (0,\p);	          
	            \else
                    \ifnum \p=3
                    \else
	            	    \foreach \j in {1,...,\pmmm}
                        {
                            \pgfmathsetmacro{\jp}{\j+1};
                            \draw[->, cyan] (0,\j) to (0,\jp);
                        }
	                    \draw[->, cyan] (1,\pm) to (0,1);
                    \fi
                    \draw[->, cyan, out=110, in=-110] (0,\pmm) to (0,\p);
                
	            \fi
	            \draw[->, cyan] (0,\p) to (1,\pm);
	            
	            \ifnum \p=2
	                \draw[->, cyan, out=65,in=-155] (0,-1) to (1,0);
	            \else
	            	\foreach \j in {0,...,\pmmm}
	                {
	                    \pgfmathsetmacro{\jp}{\j+1};
	                    \draw[->, cyan] (1,\j) to (1,\jp);
	                }
	                \draw[->, cyan] (0,-1) to (1,0);
	            \fi
	            \draw[->, cyan] (1,\pmm) to (0,-1);
	        \fi 

            \foreach \i in {2,...,\qm}
            {
                \foreach \j in {0,...,\pmm}
                {
                    \pgfmathsetmacro{\jp}{\j+1};
                    \draw[->, cyan] (\i,\j) to (\i,\jp);
                }
                \draw[->, cyan, out=-70, in=70] (\i,\pm) to (\i,0);
            }

	\end{tikzpicture}
}
\newcommand{\tikztttzarms}{
	\begin{tikzpicture}
                      
            \node (1) {$1$};
            \node[right= of 1] (0) {$0$};
        \node[below= of 1] (3) {$3$};
        \node[below= of 0] (2) {$2$};
        \node[below= of 3] (10) {$10$};
        \node[below= of 2] (5) {$5$};
        \node[below= of 10] (9) {$9$};
        \node[left= of 10] (11) {$11$};
        \node[below= of 11] (8) {$8$};
        \node[below= of 5] (7) {$7$};
        \node[right= of 5] (4) {$4$};
        \node[right= of 7] (6) {$6$};

            \draw[red] (1) -- (3);    
            \draw[red] (0) -- (2);    
            \draw[red] (10) -- (9);    
            \draw[red] (11) -- (8);    
            \draw[red] (7) -- (5);    
            \draw[red] (6) -- (4);
            \draw[blue] (10) -- (11); 
            \draw[blue] (8) -- (9);
             \draw[blue]   (6) -- (7);
             \draw[blue]   (4) -- (5);
             \draw[blue]   (0) -- (1);
             \draw[blue]   (2) -- (3);

             \draw[green, out=30, in=-210] (1) to (0);
             \draw[green] (9) -- (11);
             \draw[green] (2) -- (5);
             \draw[green] (3) -- (10);
             \draw[green, out=-30, in=210] (8) to (7);
             \draw[green, out=60, in=-60] (6) to (4);

	\end{tikzpicture}
}
\newcommand{\tikzttttarms}{
	\begin{tikzpicture}
                      
            \node[circle, draw, thick] (1) {$1$};
            \node[right = of 1] (0) {$0$};
        \node[below= of 1] (3) {$3$};
        \node[below= of 0] (2) {$2$};
        \node[below= of 3] (10) {$10$};
        \node[below= of 2] (5) {$5$};
        \node[below= of 10] (9) {$9$};
        \node[left= of 10] (11) {$11$};
        \node[below= of 11] (8) {$8$};
        \node[below= of 5] (7) {$v_-$};
        \node[below= of 8] (7b) {$v_+$};
        \node[right= of 5] (4) {$4$};
        \node[right= of 7] (6) {$6$};

            \draw[red] (1) -- (3);    
            \draw[red] (0) -- (2);    
            \draw[red] (10) -- (9);    
            \draw[red] (11) -- (8);    
            \draw[red] (7) -- (5);    
            \draw[red] (6) -- (4);
            \draw[blue] (10) -- (11); 
            \draw[blue] (8) -- (9);
             \draw[blue]   (6) -- (7);
             \draw[blue]   (4) -- (5);
             \draw[blue]   (0) -- (1);
             \draw[blue]   (2) -- (3);

             \draw[green, out=30, in=-210] (1) to (0);
             \draw[green] (9) -- (11);
             \draw[green] (2) -- (5);
             \draw[green] (3) -- (10);
             \draw[green] (7b) to (8);
             \draw[green, out=60, in=-60] (6) to (4);
	
	\end{tikzpicture}

}
\newcommand{\tikztttfarms}{
	\begin{tikzpicture}
                      
            \node (1) {$1$};
            \node[right= of 1] (0) {$u_+$};
            \node[above=of 1] (0b) {$u_-$};
        \node[below= of 1] (3) {$3$};
        \node[below= of 0] (2) {$2$};
        \node[below= of 3] (10) {$10$};
        \node[below= of 2] (5) {$5$};
        \node[below= of 10] (9) {$9$};
        \node[left= of 10] (11) {$11$};
        \node[below= of 11] (8) {$8$};
        \node[below= of 5] (7) {$v_+$};
        \node[below= of 8] (7b) {$v_-$};
        \node[right= of 5] (4) {$4$};
        \node[right= of 7] (6) {$6$};

            \draw[red] (1) -- (3);    
            \draw[red] (0) -- (2);    
            \draw[red] (10) -- (9);    
            \draw[red] (11) -- (8);    
            \draw[red] (7) -- (5);    
            \draw[red] (6) -- (4);
            \draw[blue] (10) -- (11); 
            \draw[blue] (8) -- (9);;
             \draw[blue]   (6) -- (7);
             \draw[blue]   (4) -- (5);
             \draw[blue]   (0) -- (1);
             \draw[blue]   (2) -- (3);
 
             \draw[green] (1) to (0b);
             \draw[green] (9) -- (11);
             \draw[green] (2) -- (5);
             \draw[green] (3) -- (10);
             \draw[green] (7b) to (8);
             \draw[green, out=60, in=-60] (6) to (4);

	\end{tikzpicture}
}
\title[Telescopic groups and symmetries of combinatorial maps]{Telescopic groups and symmetries of combinatorial maps}
\author[\initial{R.} Bottinelli]{\firstname{R\'emi} \lastname{Bottinelli}}
\address{Institut de Math\'ematiques, Universit\'e de Neuch\^atel, Rue Emile-Argand 11, CH-2000 Neuch\^atel, Suisse / Switzerland}
\email{remi.bottinelli@unine.ch}
\author[\initial{L.} Grave de Peralta]{\firstname{Laura} \lastname{Grave de Peralta}}
\address{Institut de Math\'ematiques, Universit\'e de Neuch\^atel, Rue Emile-Argand 11, CH-2000 Neuch\^atel, Suisse / Switzerland}
\email{laura.grave@unine.ch}
\author[\initial{A.} Kolpakov]{\firstname{\\Alexander} \lastname{Kolpakov}}
\address{Institut de Math\'ematiques, Universit\'e de Neuch\^atel, Rue Emile-Argand 11, CH-2000 Neuch\^atel, Suisse / Switzerland}
\email{kolpakov.alexander@gmail.com}
\thanks{The authors were supported by the Swiss National Science Foundation project no.~PP00P2-144681/1}
\keywords{Automorphism; Free product; Free group; Hypermap; Ribbon graph; Symmetry}
\subjclass{20E07, 52C20, 52C22, 05C30}
\begin{document}

\begin{abstract}
    In the present paper, we show that many combinatorial and topological objects, such as maps, hypermaps, three-dimensional pavings, constellations and branched coverings of the two--sphere admit any given finite automorphism group. This enhances the already known results by Frucht, Cori -- Mach\`i,  \v{S}ir\'{a}\v{n}  -- \v{S}koviera, and other authors. We also provide a more universal technique for showing that ``any finite automorphism group is possible'', that is applicable to wider classes or, in contrast, to more particular sub-classes of said combinatorial and geometric objects. Finally, we show that any given finite automorphism group can be realised by sufficiently many non-isomorphic such entities (super-exponentially many with respect to a certain combinatorial complexity measure).
\end{abstract}

\maketitle

\section{Introduction}\label{intro}

A combinatorial (oriented, labelled) map is a triple $M = (D; R, L)$ where $D$ is a non-empty finite set (called the set of darts) and $R$ and $L$ are two permutations of $D$ with $L^2 = \mathrm{id}$. The orbits of $L$ are conventionally called the edges of $M$, the orbits of $R$ are its vertices, and the orbits of $R^{-1}L$ are its faces. The map $M$ is called connected if the group $\langle R, L \rangle$ acts transitively on $D$. Unless otherwise stated, we shall assume all maps to be connected.

A topological (oriented) map $M = (\Sigma_g; \Gamma)$ is an oriented (connected) genus $g\geq 0$ surface with an embedded graph $\Gamma$ such that the complement $\Sigma_g \setminus \Gamma$ is a collection of disjoint topological discs. By providing a labelling on the half-edges of $\Gamma$ (thus defining its labelled darts), and thus obtaining a labelled topological map, one can recover the permutations $L$ and $R$, so that $L$ encodes the identification of half-edges into edges, and $R$ encodes the positive cyclic order of half-edges around each vertex. Vice versa, provided a combinatorial map, one can recover its corresponding topological labelled counterpart by creating the faces (which are discs) by following the cycles of $R^{-1}L$, and then identifying their boundaries by using $L$. For more details, cf. \cite{CM-Survey, JoSi, Tutte}.

One can define a more elaborate class of combinatorial objects (and the corresponding topological objects) such as hypermaps \cite{Cori}. A triple $H = (D; R, L)$, where $D$ is a non-empty  finite set of darts and $R$, $L$ are permutation of $D$, is called an (oriented, labelled) hypermap. The orbits of $L$ are called the hyper-edges of $H$, the orbits of $R$ are its hyper-faces. A hypermap  $H$ is connected whenever the group $\langle R, L \rangle$ acts transitively on $D$ (which will be our standing assumption).

A hypermap also naturally appears in the setting of an orientable genus $g$ surface $\Sigma_g$ and a graph $\Gamma$ embedded in $\Sigma_g$ that satisfies the following properties:
\begin{itemize}
\item[1)] the complement $\Sigma_g \setminus \Gamma$ is a union of topological discs called faces,
\item[2)] the faces are properly two-colourable (e.g. into black and white), i.e. faces of the same colour intersect only at vertices of $\Gamma$, and
\item[3)] the corners of the white faces are labelled with the numbers $1, 2, 3, \dots$ in some fashion, and a black face corner label is equal to the adjacent white face corner label, when moving clockwise around their common vertex.
\end{itemize}
Then  $H = (\Sigma_g; \Gamma)$ is an oriented labelled topological hypermap. 

The correspondence between the topological and combinatorial definitions is as follows: 
\begin{itemize}
\item[1)] each disjoint cycle of $R$ is obtained from recording the corner labels of a white face in a counter-clockwise direction,
\item[2)] each disjoint cycle of $L$ is obtained from recording the corner labels of a black face in a counter-clockwise direction,
\item[3)] each disjoint cycle of $R^{-1}L$ is obtained from recording the labels around a vertex in a counter-clockwise direction,
\end{itemize}
We remark that condition (3) above is a consequence of (1) and (2).

The set of face labels becomes the set of darts of $H$, the white faces become hyper-faces of $H$ and the black faces become hyper-edges of $H$. Thus, the combinatorial and topological definitions of an oriented labelled hypermap agree. 

If $L^2 = \mathrm{id}$, then each bigon in the hypermap 
$H = ( D; R, L )$ can be interpreted as a pair of darts pointing in opposite directions, and thus $H$ becomes a map, as defined above. 

We say that two oriented labelled (hyper-)maps $M_1 = (D; R_1, L_1)$ and $M_2 = (D; R_2, L_2)$ are isomorphic if, in the combinatorial setting, there exists a permutation $T$ of $D$ such that  $T R_1 = R_2 T$ and $T L_1 = L_2 T$. In the topological setting, two oriented labelled (hyper-)maps $M_1 = (\Sigma_g; \Gamma_1)$ and $M_2 = (\Sigma_g; \Gamma_2)$ are isomorphic if there exists an orientation-preserving homeomorphism $\tau: \Sigma_g \rightarrow \Sigma_g$ such that $\tau(\Gamma_1) = \Gamma_2$ and the labelling of the corresponding half-edges is respected. 

A rooted isomorphism will require only the root (a dedicated labelled dart) of one (hyper-)map to be carried to the root of another. 

Finally, an isomorphism is not required to respect the dart labelling, nor the roots. 

The above definition allows us to generalise the setting of maps to higher-dimensional objects, the so-called pavings. Namely, as defined in \cite{AK}, a three-dimensional oriented combinatorial map or, simply, a (combinatorial) paving, is a quadruple $P = ( D; R, L, V )$, where $D$ is a non-empty set of darts and $R, L, V$ are permutations of $D$ such that $H_P = ( D; R, L )$ is a map (not necessarily connected), and

\begin{itemize}
\item[1)] the product $L V$ is an involution,
\item[2)] the product $V R^{-1}$ is an involution,
\item[3)] none of the above involutions have fixed points.
\end{itemize}

A paving $P$ is connected if the group $\langle L, R, V \rangle$ acts transitively on $D$. The notion of (labelled, rooted) isomorphism for oriented combinatorial pavings is analogous to the one for combinatorial maps. 

We may also think of $P$ as a quadruple $P = ( D; L, S, T )$,  where $D$ is the set of darts and $L, S, T$ are its involutions without fixed points. In this case it is easy to see that letting $V = L S$ and $R = T L S $ produces the initial definition.  As in the case of two-dimensional maps, a combinatorial paving $P$ has a topological realisation which, however, is not always a three-dimensional manifold (however, it's always a pseudo-manifold).

In order to assemble an oriented cellular complex $M_P$, as described in \cite{Spehner}, we first produce its underlying map $H_P = ( D; R, L )$, and realise each connected component of $H$ as a topological map, i.e. as a surface $\Sigma^i$ with an embedded graph $\Gamma^i$, $i=1, 2, \dots, m$, having labelled half-edges. Each surface $\Sigma^i$ represents the boundary of a handle-body $B^i$, and then the handle-bodies $B^i$ become identified along their boundaries in order to produce a labelled oriented cellular complex representing $P$ topologically. Indeed, the faces of $\Sigma^i$'s defined by the permutation $R^{-1} L$ are identified in accordance with the permutation $V$, and the conditions (1), (2), and (3) above ensure that one face cannot be identified to multiple disjoint counterparts  (implied by (1) and (2)), and edges or faces cannot bend onto themselves (implied by (3)). Also, conditions (1) and (2) ensure that $M_P$ is an orientable topological space.

There are other generalisations of maps, hypermaps and pavings, such as constellations, cf. the monograph \cite{LZ} for more information and references. 

One of the basic questions is understanding possible symmetries, or automorphisms (i.e. unrooted self-isomorphisms), of any of the above defined objects. Those can be understood by means of building a one-to-one correspondence between a class of rooted (hyper-)maps $\mathcal{M}$ (or isomorphism classes of maps) on a set of dart $D$ and (usually, torsion-free) subgroups (or their conjugacy classes) of a given single group $\Delta^+$. This correspondence will associate to each map $M \in \mathcal{M}$ a subgroup $H_M \subset \Delta^+$ of index $|D|$. The origins of this technique draw back to the paper by Jones and Singerman \cite{JoSi}, and have been developed more in the recent works by Breda, Mednykh and Nedela \cite{BrMeNe}, Mednykh and Nedela \cite{MN1, MN2} for the purpose of solving Tutte's problem of (hyper-)map classification, cf. also \cite{CK1, CK2}.

Let us consider the case of maps, as described in \cite{BrMeNe, JoSi}. Namely, the rooted maps on $n$ darts (where the root is always supposed to be marked $1$) are in a one-to-one correspondence with index $n$ free subgroups of $\Delta^+ = \mathbb{Z}*\mathbb{Z}_2$. Indeed, each free subgroup $H < \Delta^+$ of index $n$ produces a set of cosets $D = \Delta^+ / H$ of cardinality $n$, which can be considered as a set of darts. The root dart here is the identity coset. A subgroup of $\Delta^+$ is torsion-free if and only if it is free, as a consequence of Kurosh's theorem. Thus $\Delta^+ = \mathbb{Z}*\mathbb{Z}_2 \cong \langle \sigma \rangle * \langle \alpha \rangle$ acts on $D$ transitively, and its generators $\sigma$ and $\alpha$ give rise to permutations $R$ and $L$ acting transitively on $D$. Thus, we obtain a map $M_H$ corresponding to a free subgroup $H < \Delta^+$. Vice versa, given a map $( D; R, L )$, we have a homomorphism $S: \Delta^+ \rightarrow \langle R, L \rangle$ by setting $S(\sigma) = R$, $S(\alpha) = L$. The homomorphism $S$ defines an action of $\Delta^+$ on $D$, and the subgroup corresponding to $M$ is $H_M = Stab(1) < \Delta^+$. 

The above correspondence between the free subgroups of $\Delta^+$ and rooted maps can be extended to the case of hypermaps (with $\Delta = \mathbb{Z}*\mathbb{Z} \cong F_2$), or the so-called $(p,q)$-hypermaps (with $\Delta^+ = \mathbb{Z}_p*\mathbb{Z}_q$ \cite{CK1}), or $3$-dimensional maps (also called pavings \cite{AK, Spehner}, with $\Delta^+ = \mathbb{Z}_2*\mathbb{Z}_2*\mathbb{Z}_2$ \cite{CK2}). 

The isomorphisms classes of all aforementioned objects correspond to  the conjugacy classes of free subgroups of $\Delta^+$ \cite[Theorem~3.7]{JoSi}. The symmetries (i.e. unrooted self-isomorphisms) of a (hyper-)map $M$ corresponding to a subgroup $H < \Delta^+$ form a group isomorphic to $N(H)/H$, where $N(H) = \{ g \in \Delta^+ \, | \, g H g^{-1} = H \}$ is the normaliser of $H$ in $\Delta^+$ \cite[Theorem~3.8]{JoSi}.

In the sequel we shall study a more abstract question, namely the property of free products of cyclic groups being ``telescopic'', cf. Definition~\ref{defn:telescopic}. Such a free product $T$ being telescopic allows us to realise any finite group $\Gamma$ as the ``symmetry group'' $N(H)/H$ of a suitable finite-index subgroup $H \leq T$. Thus, one of our main results is the following statement, cf. Theorem~\ref{thm:free-products-are-telescopic}.

\begin{thm*}
Any free product of at least two non-trivial cyclic groups is freely telescopic, except for the infinite dihedral group $D_\infty \cong \ZZ_2*\ZZ_2$.
\end{thm*}

If we allow the index of $H$ to be sufficiently large, depending on the cardinality of $\Gamma$, then a great deal of same index subgroups $H$ with $N(H)/H \cong \Gamma$ can be obtained, cf. Theorem~\ref{thm:asympt}. More precisely, the following holds. 

\begin{thm*}
Let $T$ be a finite free product of cyclic groups, different from $\ZZ_2*\ZZ_2$. Then for any finite group ${\Gamma}$, there exist constants $A > 1$, $B > 0$ and $M\in\NN$ such that for all $d{\geq}M$ the set $F(T, {\Gamma}, d) = \{\text{free subgroups } H \leq T \text{ of index } \leq d \text{ with } N_T(H)/H{\cong}{\Gamma}, \text{ up to conjugacy}\}$ has cardinality ${\geq}A^{B d\log d}$.
\end{thm*}

Finally, we translate our group-theoretic statements into the combinatorial language of (hyper-)maps and pavings, cf. Theorems~\ref{thm:p-q-hypermaps-symmetries}--\ref{thm:constellations-symmetries}. Such a transition from combinatorics to groups, to combinatorics again is an integral part of our approach. First, we want to obtain some information about symmetries of a sufficiently complicated combinatorial object. Next, we translate our questions about symmetries into a question about the existence of (torsion-free) subgroups of a free product of cyclic groups with some condition on their normalisers. This condition is formulated in terms of combinatorial automorphisms of the subgroup's Schreier graph, by analogy to the approach introduced in \cite{KM, We}. The symmetries of the corresponding Schreier graphs appear more amenable to combinatorial analysis, which finally provides us with the desired results both in group-theoretic and combinatorial terms. 

\vspace*{-0.25in}

\begin{rema*}
Soon after a draft of this paper appeared on the arXiv, the authors were notified by Gareth A. Jones that his paper \cite{Jones-arXiv} contains similar results for a wider class of group. In particular, by \cite[Theorem 3]{Jones-arXiv}, all hyperbolic (extended) triangle groups are shown to be ``finitely abundant'', which is equivalent to being telescopic for the non-compact ones among them. Also, all subgroups produced in \cite[Theorem 3]{Jones-arXiv} are, in fact, torsion-free. The methods used in \cite{Jones-arXiv} and in our paper differ substantially, as well as the emphasis in our work is on the quantitative aspects, such as counting of combinatorial objects with given symmetries. 
\end{rema*}

\section{Preliminaries}

We first establish the necessary notation and provide some basic definitions. Let $G$ be a group, and $H$ be a subgroup of $G$. Let $N_G(H) = \{ g \in G\ :\ gHg^{-1} = H \}$ denote the normaliser of $H$ in $G$.  

\begin{defi}\label{defn:telescopic}
We say that a group $T$ is \textit{telescopic} if for every finite group $\Gamma$ there exists a finite-index subgroup $H \leq T$ such that $N_T(H)/H \cong \Gamma$. 
\end{defi}

\begin{defi}
If in the above definition we can always choose $H$ to be a free subgroup of $T$, we say that $T$ is \textit{freely telescopic}. 
\end{defi}

\begin{defi}
A (di-) graph is a tuple $(V,E,{\iota}:E{\to}V,{\tau}:E{\to}V)$, where $V$ is the set of \emph{vertices}, $E$ is the set of directed \emph{edges} and ${\iota}$, resp. ${\tau}$, assigns to each edge $e$ its initial vertex (or origin) ${\iota}(e)$, resp. its terminal vertex (or terminus) ${\tau}(e)$. We shall write $\St_+v := \{e{\in}E\ :\ {\iota}(e) = v\}$ and $\St_-v := \{e{\in}E\ :\ {\tau}(e) = v\}$. A morphism of graphs ${\phi}:(V_{1},E_{1},{\iota}_{1},{\tau}_{1}) {\to} (V_{2},E_{2},{\iota}_{2},{\tau}_{2})$ consists of a pair of maps ${\phi}_V:V_{1}{\to}V_{2}$ and ${\phi}_E:E_{1}{\to}E_{2}$ such that ${\iota}_{2}{\phi}_E = {\phi}_V{\iota}_{1}$ and ${\tau}_{2}{\phi}_E = {\phi}_V{\tau}_{2}$. If $\cG$ is a (di-)graph, then $V\cG$ will denote its set of vertices, and $E\cG$ will be its set of edges.

A graph is \textit{labelled} by the elements of a set $S$ if a map ${\mu}:E{\to}S$ is given. Let us write $S$-digraph for an $S$-labelled digraph. A labelled $S$-digraph $\cG$ is called \emph{folded} (cf.~\cite{St}) if, for any $v \in V\cG$, the restrictions of ${\mu}$ to $\St_+v$ and $\St_-v$ are injective, and $\cG$ is called \emph{regular} if they are bijective. A morphism of $S$-digraphs is a morphism of digraphs satisfying ${\mu}_{2}{\phi}_E = {\mu}_{1}$.
\end{defi}

Recall that if $G$ is a group generated by a set $S$, and $H{\leq}G$ is a subgroup of $G$, then the Schreier graph $\Sch_{G,S}(H)$ of $H$ is an $S$-labelled regular graph having vertex set the right cosets of $H$, and an edge $H\cdot g \xrightarrow{s} H\cdot gs$ from $H\cdot g$ to $H\cdot gs$, labelled $s$, for each element $s{\in}S$ and each coset $H\cdot g$. The Cayley graph $\Cay(G, S)$ of $G$ is defined as the Schreier graph of the trivial subgroup $\{ \mathrm{id} \}$ of $G$. Both Cayley and Schreier graphs are generally considered with a basepoint: the coset $H\cdot e = H$. When graphs with basepoints are considered, their morphisms are assumed to send basepoints to basepoints.

If a graph $\mathcal{G}$ has label set $S \subset G$, for some group $G$, and a vertex $v \in V\cG$ is specified, then the set
\[
    L(\mathcal{G},v) := \{\text{labels of loops at $v$, evaluated in $G$}\}
\]
forms a group, called the language of $\mathcal{G}$ at $v$ (an empty loop gives the identity of $G$, paths concatenation corresponds to taking products, and reversing paths corresponds to taking inverses). Let $\mathrm{ev}: \{\text{words in }S \sqcup S^{-1}\} {\to} G$ denote the evaluation map, so that $L(\mathcal{G},v) = \mathrm{ev}{\circ}{\mu}\left(\text{``loops at $v$''}\right)$. 

We also present a few key results that we make use of, most of which can be found in \cite{KM, St, We}. First, recall Kurosh's theorem.

\begin{thm}[Kurosh's Subgroup Theorem]
    Let $G_{1}, \dots, G_n$ be groups, and $G := *_{i=1}^n G_i$ be their free product.
    Then, any subgroup $H {\leq} G$ of $G$ has the form:
    \[
        (*_{i=1}^n*_{j=1}^{m_i} w_{ij}H_{ij}w_{ij}^{-1} )* F(X)
    \]
    where each $H_{ij}$ is a subgroup of $G_i$, $F(X)$ is a free subgroup generated by a subset $X$ of $G$, and $w_{ij}$ is an element of $G$, for $1{\leq}i{\leq}n$ and $1{\leq}j{\leq}m_i$.
\end{thm}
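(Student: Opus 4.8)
The plan is to deduce Kurosh's theorem from the action of $G$ on its Bass--Serre tree, which turns the algebraic statement into a transparent statement about a group acting on a tree with trivial edge stabilizers. First I would realise $G = *_{i=1}^n G_i$ as the fundamental group of a graph of groups whose underlying graph is a star: a central vertex $v_0$ carrying the trivial group, peripheral vertices $v_1, \dots, v_n$ carrying $G_1, \dots, G_n$, and one edge $e_i$ joining $v_0$ to $v_i$ with trivial edge group. Standard Bass--Serre theory identifies the fundamental group of this graph of groups with $*_{i=1}^n G_i = G$ and produces a tree $\mathcal{T}$ on which $G$ acts without inversions, with all edge stabilizers trivial and vertex stabilizers exactly the conjugates $g G_i g^{-1}$ (over the $v_i$) and the trivial group (over $v_0$).

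Next I would restrict this action to the subgroup $H \leq G$. Since $H$ acts on the same tree $\mathcal{T}$ with trivial edge stabilizers, the Fundamental Theorem of Bass--Serre theory expresses $H$ as the fundamental group of the quotient graph of groups $H \backslash \mathcal{T}$. As every edge group here is trivial, the fundamental group of a graph of groups with trivial edge groups splits as the free product of its vertex groups together with a free group $F(X)$, where $F(X)$ is the fundamental group of the underlying quotient graph and $X$ indexes the edges lying outside a chosen spanning tree.

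It remains to identify the pieces. The vertex groups that occur are the $H$-stabilizers of representatives of the $H$-orbits of vertices of $\mathcal{T}$; over $v_0$ these are trivial and contribute nothing, while over $v_i$ they are the intersections $H \cap w_{ij} G_i w_{ij}^{-1}$, which we may write as $w_{ij} H_{ij} w_{ij}^{-1}$ with $H_{ij} = w_{ij}^{-1}(H \cap w_{ij} G_i w_{ij}^{-1}) w_{ij} \leq G_i$. Assembling these contributions yields $H \cong \big(*_{i=1}^n *_{j=1}^{m_i} w_{ij} H_{ij} w_{ij}^{-1}\big) * F(X)$, which is the asserted form.

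The main obstacle I expect is the bookkeeping needed to pin down the indexing rather than any single deep step: one must check that the $H$-orbits of peripheral vertices lying over $v_i$ correspond bijectively to the double cosets in $H \backslash G / G_i$ (so that the representatives $w_{ij}$ and the count $m_i$ are well defined), that conjugating by $w_{ij}$ really lands $H_{ij}$ inside $G_i$, and that the rank of $F(X)$ equals the first Betti number of the quotient graph $H \backslash \mathcal{T}$. A fully self-contained alternative would replace the Bass--Serre machinery by a covering-space argument: take a space $X$ that is a wedge of spaces $X_i$ with $\pi_1(X_i) = G_i$, so that $\pi_1(X) = G$, pass to the cover $\widetilde{X} \to X$ corresponding to $H$, and read the decomposition off the components of the preimages of the $X_i$ after collapsing a spanning tree of the preimage of the wedge point; there the same combinatorial bookkeeping reappears in the choice of spanning tree.
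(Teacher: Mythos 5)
Your proposal is correct and follows essentially the same route as the paper, which does not prove the theorem itself but cites Serre's monograph \emph{Trees}, whose proof is precisely the Bass--Serre argument you outline (the action of $G$ on the tree associated to the star-shaped graph of groups, restricted to $H$). The bookkeeping points you flag (double cosets $H\backslash G/G_i$ indexing the conjugates, and the free part coming from edges outside a spanning tree of the quotient graph) are exactly the details handled in that reference.
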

\begin{proof}
    See, for instance, the monograph~\cite[pp. 56 -- 57]{Se}.
\end{proof}

\begin{lemm}[{\cite[Lemma 7.5]{KM}}]\label{lemma:basepoint-conjugate}
If $\mathcal{G}$ is an $S$-digraph with $S$ a subset of a group $G$, then for any vertices $v_{1},v_{2}$ connected by a path $p:v_{1} \rightsquigarrow v_{2}$ with $g:=\ev \circ {\mu}(p)$:
    \[
        L(\mathcal{G},v_{1}) = gL(\mathcal{G},v_{2})g^{-1}.
    \]
\end{lemm}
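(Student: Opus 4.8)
The plan is to establish the two inclusions $g\,L(\mathcal{G},v_2)\,g^{-1} \subseteq L(\mathcal{G},v_1)$ and $L(\mathcal{G},v_1) \subseteq g\,L(\mathcal{G},v_2)\,g^{-1}$ separately, relying only on the fact that the composite $\ev\circ{\mu}$ carries concatenation of paths to multiplication in $G$ and carries reversal of a path to inversion in $G$. Concretely, if $q\colon w_1\rightsquigarrow w_2$ and $q'\colon w_2\rightsquigarrow w_3$ are paths, then $\ev\circ{\mu}(q\,q') = (\ev\circ{\mu}(q))\,(\ev\circ{\mu}(q'))$, while the reverse path $p^{-1}\colon v_2\rightsquigarrow v_1$ satisfies $\ev\circ{\mu}(p^{-1}) = g^{-1}$. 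These identities are precisely the observations already recorded when we noted that $L(\mathcal{G},v)$ forms a group.

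For the first inclusion, I would take an arbitrary element $h \in L(\mathcal{G},v_2)$, realised as $h = \ev\circ{\mu}(\ell)$ for some loop $\ell$ based at $v_2$. The concatenation $p\,\ell\,p^{-1}$ is then a well-defined loop based at $v_1$, and evaluating it gives $\ev\circ{\mu}(p\,\ell\,p^{-1}) = g\,h\,g^{-1}$. Hence $g\,h\,g^{-1} \in L(\mathcal{G},v_1)$, which proves $g\,L(\mathcal{G},v_2)\,g^{-1} \subseteq L(\mathcal{G},v_1)$.

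The reverse inclusion follows by symmetry: applying the inclusion just established with the roles of $v_1$ and $v_2$ interchanged and with the path $p^{-1}\colon v_2\rightsquigarrow v_1$ (whose evaluation is $g^{-1}$) in place of $p$, we obtain $g^{-1}\,L(\mathcal{G},v_1)\,g \subseteq L(\mathcal{G},v_2)$, which rearranges to $L(\mathcal{G},v_1) \subseteq g\,L(\mathcal{G},v_2)\,g^{-1}$. Combining the two inclusions yields the claimed equality.

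There is no genuinely hard step here; the statement is essentially a restatement of the naturality of the evaluation map under a change of basepoint. The only points requiring care are the bookkeeping of conventions---specifically that traversing an edge backward contributes the inverse of its label, so that $\ev\circ{\mu}(p^{-1}) = (\ev\circ{\mu}(p))^{-1}$---and the verification that the three segments $p$, $\ell$, $p^{-1}$ genuinely compose into a legitimate loop at $v_1$. Once those conventions are fixed, both inclusions are immediate.
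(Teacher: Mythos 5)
Your proof is correct and follows exactly the paper's argument: conjugating a loop at $v_2$ by the path $p$ gives a loop at $v_1$ whose evaluation is $g h g^{-1}$, yielding one inclusion, and the other follows by symmetry using $p^{-1}$. No substantive difference from the paper's own proof.
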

\begin{proof}
If $l$ is a loop at $v_{2}$, then $plp^{-1}$ is a loop at $v_{1}$, and $\ev{\mu}(plp^{-1}) = g\ev{\mu}(l)g^{-1}$, so that the conjugation by $g$ maps elements of $L(\cG,v_{2})$ to elements of $L(\cG,v_{1})$, and thus $gL(\cG,v_{2})g^{-1} {\subseteq} L(\cG,v_{1})$. Symmetrically, $g^{-1}L(\cG,v_{1})g {\subseteq} L(\cG,v_{2})$, and the result follows.
\end{proof}

\begin{lemm}[{\cite[Lemma 4.2]{KM}}]
If $\cA,\cB$ are $S$-digraphs and $\cB$ is folded, then for any vertices $v{\in} V\cA$ and $u{\in} V\cB$, there exists at most one morphism of $S$-digraphs ${\phi}:\cA{\to}\cB$ satisfying ${\phi}(v)=u$. 
\end{lemm}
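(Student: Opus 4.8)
The plan is to argue by propagating the constraint $\phi(v)=u$ outward along edges, using the folding hypothesis on $\cB$ to force uniqueness one edge at a time. Throughout, the value $\phi(v)=u$ can only determine $\phi$ on the connected component of $v$ in $\cA$; I would therefore state the conclusion for that component, which gives uniqueness on all of $\cA$ whenever $\cA$ is connected (the case relevant to Schreier graphs in our applications). So suppose $\phi,\psi\colon\cA\to\cB$ are two $S$-digraph morphisms with $\phi(v)=\psi(v)=u$, and let $W:=\{w\in V\cA : \phi_V(w)=\psi_V(w)\}$; the goal is to show $W$ exhausts the component of $v$ and that $\phi_E=\psi_E$ there.

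The core is a local uniqueness claim: if $w\in W$ and $e$ is any edge of $\cA$ incident to $w$, then $\phi_E(e)=\psi_E(e)$ and the opposite endpoint of $e$ again lies in $W$. To see this, first take $e\in\St_+ w$, i.e.\ $\iota(e)=w$. The morphism relations give $\iota\,\phi_E(e)=\phi_V(\iota(e))=\phi_V(w)$ and likewise $\iota\,\psi_E(e)=\psi_V(w)=\phi_V(w)$, so both $\phi_E(e)$ and $\psi_E(e)$ lie in $\St_+\phi_V(w)$. Moreover, being $S$-digraph morphisms, they preserve labels: $\mu_{\cB}\,\phi_E(e)=\mu_{\cA}(e)=\mu_{\cB}\,\psi_E(e)$. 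Since $\cB$ is folded, the restriction of $\mu_{\cB}$ to $\St_+\phi_V(w)$ is injective, forcing $\phi_E(e)=\psi_E(e)$; applying $\tau$ then yields $\phi_V(\tau(e))=\tau\,\phi_E(e)=\tau\,\psi_E(e)=\psi_V(\tau(e))$, so $\tau(e)\in W$. The case $e\in\St_- w$ is entirely symmetric, this time invoking injectivity of $\mu_{\cB}$ on $\St_-\phi_V(w)$.

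With the claim in hand, the conclusion follows by connectivity: $v\in W$, and the claim shows that $W$ is closed under traversing any incident edge in either direction, and that $\phi_E$ and $\psi_E$ agree on every such edge. An induction on the length of an edge-path (ignoring orientation) from $v$ to an arbitrary vertex $w'$ then places $w'\in W$, and every edge of the component of $v$ gets caught in the agreement set, whence $\phi=\psi$ on that component. I expect the only real obstacle to be bookkeeping rather than conceptual difficulty: one must handle the two edge orientations separately and make genuine use of \emph{both} injectivity conditions in the definition of \emph{folded}, and one must be explicit that without connectivity of $\cA$ the morphism is pinned down only on the component of $v$, so either $\cA$ is assumed connected or the statement is read componentwise.
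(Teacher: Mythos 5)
Your argument is correct and is precisely the ``follow paths'' argument that the paper invokes in its one-line proof (propagating agreement along edges and using both injectivity conditions of foldedness at each step), just written out in full detail. Your observation that uniqueness is only pinned down on the connected component of $v$ is a valid refinement of the statement as written, and is harmless in the paper's applications since Schreier graphs are connected.
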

\begin{proof}
Follow paths; cf.~\cite[Lemma 4.2]{KM} or \cite[5.1 (c)]{St}.
\end{proof}

Let $\ZZ_{p_i} = \langle s_i\, |\, s_i^{p_i} \rangle$ denote the cyclic group of order $p_i \in \mathbb{N} \cup \{\infty\}$, for $p_i\geq 2$, while setting $p_i = \infty$ yields $\ZZ$ (which will be our standard notation for the rest of the paper).

\begin{lemm}[{\cite[Theorem 1.2]{Sto}}]\label{lemma:bijection-subgroup-graph}
Let $G := \ZZ_{p_1}* \dots *\ZZ_{p_n}$ be a free product of cyclic groups with generators $S = \{s_1, \dots, s_n\}$, assuming $\ZZ_{p_i} = \langle s_i \rangle$.
    There is a bijection between the sets
    \begin{enumerate}
        \item[$A:=$] ``subgroups of $G$'' and
        \item[$B:=$] ``connected, regular $S$-digraphs with a basepoint, such that for any $s_i$, the edges labelled by $s_i$ form cycles of length dividing $p_i < \infty$, up to isomorphism''.
    \end{enumerate}
    Moreover, free (equivalently, torsion-free) subgroups correspond to graphs with $s_i$-labelled cycles of length exactly $p_i < \infty$, and under this equivalence, the index of a subgroup equals the number of vertices in the corresponding graph.
\end{lemm}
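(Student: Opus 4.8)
The plan is to exhibit two mutually inverse maps $\Phi\colon A\to B$ and $\Psi\colon B\to A$. Given a subgroup $H\leq G$, set $\Phi(H):=\Sch_{G,S}(H)$ with basepoint the trivial coset $H\cdot e$. One checks directly that $\Phi(H)\in B$: it is regular because right multiplication by each $s_i$ permutes the cosets $H\backslash G$; it is connected because $S$ generates $G$, so $G$ acts transitively on $H\backslash G$; and for $p_i<\infty$ the $s_i$-orbit of a coset is $Hg,Hgs_i,Hgs_i^2,\dots$, which closes up after the least $d$ with $Hgs_i^d=Hg$, a divisor of $p_i$ since $s_i^{p_i}=e$. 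Conversely, for $\cG\in B$ with basepoint $v_0$ set $\Psi(\cG):=L(\cG,v_0)$, which is a subgroup of $G$ by the discussion preceding Kurosh's theorem. Both assignments descend to the equivalences defining $A$ and $B$: a based isomorphism of $S$-digraphs preserves loops at the basepoint together with their labels, hence preserves $L(\cdot,v_0)$.

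The easy composite is $\Psi\circ\Phi=\Id_A$: a word $w$ in $S\sqcup S^{-1}$ labels a loop at $H\cdot e$ in $\Sch_{G,S}(H)$ exactly when $H\cdot\ev(w)=H\cdot e$, i.e. when $\ev(w)\in H$, so $L(\Sch_{G,S}(H),H\cdot e)=H$. The reverse composite $\Phi\circ\Psi=\Id_B$ is the step I expect to be the main obstacle, and the key idea is to promote $\cG$ to a $G$-set. By regularity each $s_i$ determines a permutation $\rho_i$ of $V\cG$, sending a vertex to the terminus of its outgoing $s_i$-edge; the cycle condition says precisely that every cycle of $\rho_i$ has length dividing $p_i$, so $\rho_i^{p_i}=\Id$ whenever $p_i<\infty$ (and no relation is needed when $p_i=\infty$). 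Hence $s_i\mapsto\rho_i$ extends, by the universal property of the free product, to a right action of $G$ on $V\cG$, written $(v,g)\mapsto v\cdot g$ with $v_0\cdot s_i=\rho_i(v_0)$, and connectedness makes it transitive. Setting $H:=\mathrm{Stab}_G(v_0)$, the orbit map $Hg\mapsto v_0\cdot g$ is a well-defined bijection $H\backslash G\to V\cG$ intertwining the edge $Hg\xrightarrow{s_i}Hgs_i$ with $v\xrightarrow{s_i}\rho_i(v)$; since both graphs are regular (hence folded), the earlier uniqueness lemma (\cite[Lemma 4.2]{KM}) identifies it as the unique based morphism, and it is an isomorphism $\Sch_{G,S}(H)\xrightarrow{\sim}\cG$ of based $S$-digraphs. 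Finally $g\in\mathrm{Stab}_G(v_0)$ iff some (hence any) word for $g$ traces a loop at $v_0$, so $H=L(\cG,v_0)=\Psi(\cG)$, whence $\Phi(\Psi(\cG))\cong\cG$.

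For the remaining claims, the vertex bijection $H\backslash G\cong V\cG$ immediately yields $[G:H]=|V\cG|$. For the torsion-free characterisation I would invoke two facts: by Kurosh's Subgroup Theorem a subgroup of $G$ is torsion-free iff it is free, and every torsion element of $\ZZ_{p_1}*\dots*\ZZ_{p_n}$ is conjugate to a nontrivial power of some $s_i$. The $s_i$-cycle through $Hg$ has length $<p_i$ precisely when $Hgs_i^{d}=Hg$ for some $0<d<p_i$, i.e. when $gs_i^{d}g^{-1}$ is a nontrivial torsion element lying in $H$. Thus $H$ contains nontrivial torsion iff some $s_i$-cycle is shorter than $p_i$; equivalently, $H$ is free iff all $s_i$-cycles have length exactly $p_i$, as claimed.
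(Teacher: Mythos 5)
Your proposal is correct and follows essentially the same route as the paper, which only sketches this proof: the Schreier graph $\Sch_{G,S}(\cdot)$ in one direction, the language at the basepoint $L(\cdot,v_0)$ in the other, and the fact that torsion elements of a free product are conjugate into the factors (combined with Kurosh) for the freeness claim. You have simply filled in the details of that sketch, in particular the verification that $\Phi\circ\Psi=\Id_B$ via the induced $G$-action on vertices.
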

If, in a labelled digraph as in the lemma above, a cycle labelled by $s_i$ has length a proper divisor of $p_i < \infty$, such a cycle will be called degenerate, following~\cite{Sto}. An element of $B$ is a $(G,S)$-Schreier graph, which is called \textit{non-degenerate}, if it contains no degenerate cycle.

\begin{proof}[Sketch of proof.]
The Schreier graph of the quotient $\Sch_{G,S}({\cdot})$ provides one direction of the equivalence, while the language at the root $L({\cdot})$ proves the other. In order to show the equivalence ``free subgroup'' ${\Leftrightarrow}$ ``no degenerate cycle'', one uses the fact that a torsion element in $G$ must be conjugate to an element of one of the factors, and vice versa.
\end{proof}

The equivalence of Lemma~\ref{lemma:bijection-subgroup-graph} can actually be generalised to arbitrary finitely generated groups if one does not care about freeness: this is done (using a slightly different language) in~\cite[Theorem 3.5]{We}.

\begin{prop}\label{lemma:N-over-H-as-auto}
Let $\cS(H) := \Sch_{G,S}(H)$. Then $\aut(\cS(H)) \cong N_G(H)\diagup H$.
\end{prop}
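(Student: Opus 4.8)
The plan is to build the isomorphism by left translation on cosets and then to prove surjectivity using the language machinery. Recall that $V\cS(H)$ is the set of right cosets $H\cdot g$, with basepoint $H=H\cdot e$ and an edge $H\cdot g\xrightarrow{s}H\cdot gs$ for each $s\in S$; for the purpose of this statement we regard $\cS(H)$ as an $S$-digraph whose automorphisms need \emph{not} fix the basepoint, which is the reading that yields a possibly non-trivial group. For $n\in N_G(H)$ I would set $\lambda_n\colon H\cdot g\mapsto H\cdot ng$. The normaliser hypothesis is exactly what makes this well defined: if $H\cdot g=H\cdot g'$, say $g'=hg$ with $h\in H$, then $H\cdot ng'=Hnhg=H(nhn^{-1})ng=H\cdot ng$ since $nhn^{-1}\in H$. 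Because $\lambda_n(H\cdot gs)=H\cdot ngs=\lambda_n(H\cdot g)\cdot s$, the map $\lambda_n$ commutes with the right $G$-action on cosets and hence lifts to a label-preserving automorphism of $\cS(H)$.

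Next I would verify that $n\mapsto\lambda_n$ is a homomorphism $N_G(H)\to\aut(\cS(H))$, since $\lambda_n\lambda_m(H\cdot g)=H\cdot nmg=\lambda_{nm}(H\cdot g)$. Its kernel consists of those $n$ with $H\cdot ng=H\cdot g$ for all $g$; taking $g=e$ forces $Hn=H$, i.e. $n\in H$, and conversely $H$ lies in the kernel. This produces an injection $N_G(H)/H\hookrightarrow\aut(\cS(H))$, so only surjectivity remains.

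For surjectivity I would use the language at a vertex. One has $L(\cS(H),H)=H$, since a loop at the basepoint is a word $w$ with $H\cdot\ev(w)=H$, that is $\ev(w)\in H$; and by Lemma~\ref{lemma:basepoint-conjugate}, choosing a path from $H$ to $H\cdot g$ that evaluates to $g$, we obtain $L(\cS(H),H\cdot g)=g^{-1}Hg$. Any automorphism $\phi$ carries loops at $v$ bijectively onto loops at $\phi(v)$ while preserving labels, hence preserving their evaluation in $G$, so $L(\cS(H),v)=L(\cS(H),\phi(v))$ for every $v$. Writing $\phi(H)=H\cdot g_0$, this gives $H=L(\cS(H),H)=L(\cS(H),H\cdot g_0)=g_0^{-1}Hg_0$, whence $g_0\in N_G(H)$. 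Now both $\phi$ and $\lambda_{g_0}$ are morphisms of $S$-digraphs sending the vertex $H$ to $H\cdot g_0$, and $\cS(H)$ is regular, hence folded; by the uniqueness of morphisms into a folded digraph (\cite[Lemma 4.2]{KM}) we conclude $\phi=\lambda_{g_0}$. Thus every automorphism equals some $\lambda_{g_0}$ with $g_0\in N_G(H)$, giving surjectivity and the isomorphism $\aut(\cS(H))\cong N_G(H)/H$.

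I expect the surjectivity step to be the main obstacle, and within it the crucial point is the identification of $\phi(H)=H\cdot g_0$ together with the proof that $g_0$ normalises $H$. This is precisely where the computation $L(\cS(H),H\cdot g)=g^{-1}Hg$ and its invariance under automorphisms do the essential work; once the normaliser condition on $g_0$ is secured, the folding/uniqueness lemma pins down $\phi$ uniquely and the remaining verifications (well-definedness, homomorphism, kernel) are routine.
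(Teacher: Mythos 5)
Your proposal is correct and follows essentially the same route as the paper's proof: the map $n\mapsto(Hg\mapsto Hng)$, kernel computation via $Hn=H$, and surjectivity via the invariance of the language under automorphisms combined with Lemma~\ref{lemma:basepoint-conjugate} and the uniqueness of morphisms into a folded (regular) graph. The only difference is that you spell out the well-definedness and homomorphism checks that the paper declares routine.
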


In the above statement, $\aut$ denotes the group of automorphisms of a labelled digraph without basepoint.

\begin{proof}
Let $N := N_G(H)$, and let us consider the following map:
\begin{alignat*}{10}
\Phi : N &\longrightarrow \aut(\cS(H))\\
n &\longmapsto \left( \phi_n : Hg \mapsto Hng \right)
\end{alignat*}
Then, $\Phi$ is a well-defined surjective group homomorphism with kernel exactly $H$. 

It is routine to check that $\Phi$ is well-defined and is a homomorphism: this fact depends on $N$ being the normaliser of $H$. Let us verify its surjectivity. 

Let $\phi$ be an element of $\aut(\cS(H))$ and let $Hg$ be the image of $H$ under $\phi$.
Since ${\phi}$ is an automorphism, we have that
\[
    H = L(\cS(H),He) = L({\phi}(\cS(H)),{\phi}(He)) = L(\cS(H),Hg),
\]
Also, we know that $L(\cS(H),Hg) = g^{-1}L(\cS(H),He)g$,  since changing the basepoint changes the language by conjugation, as in Lemma~\ref{lemma:basepoint-conjugate}. This implies $H = g^{-1}Hg$, and thus $g{\in}N$. Since $\cS(H)$ is a regular graph, there is a unique morphism sending $H$ to $Hg$, so that $\phi=\phi_g$. Therefore, $\Phi$ is surjective.

Finally, let us verify that $\ker {\Phi} = H$. Let $n$ be an element of $\ker \Phi$.  Then $\phi_n = \mathrm{id}_{\cS(H)}$, which implies that $Hn = H$, and $n{\in}H$.
Conversely, for any $h {\in} H$, ${\phi}_h$ is readily seen to be the identity map, since $Hh = H$. Therefore, $\ker {\Phi} = H$ and the claim follows by the first isomorphism theorem.
\end{proof}

\section{Free products of cyclic groups}

In this section we show that any free product of (non-trivial) cyclic groups with at least two factors is freely telescopic, with the obvious exception of $D_\infty \cong \ZZ_2*\ZZ_2$, the infinite dihedral group.

From now on, let ${\Gamma}$ denote a finite group with generating set $S$. Let $T$ be a finite free product of cyclic groups $\ZZ_{p_1}*\dots*\ZZ_{p_n}$, and $X$ be the natural choice of its generators (one per factor). We always assume that $p_i \geq 2$ and, if $T=\ZZ_p*\ZZ_q$, also that $p \geq q \geq 2$, while $p \geq 3$. 

We will proceed as follows, in order to prove that any $T$ as above is freely telescopic, or equivalently, that any finite group ${\Gamma}$ is isomorphic to a quotient $N_T(H)/H$ for $H$ a free finite-index subgroup of $T$.

\subsection{Plan of proof}\label{proof-plan}
\begin{enumerate}
\item Some algebraic arguments (Lemmas \ref{lemma:telescopic-lcm} -- \ref{lemma:telescopic-Z}) using Kurosh's Subgroup Theorem reduce the problem to free products of the form $T=\ZZ_p*\ZZ_q$ ($p \geq 3, q \geq 2$) and $T=\ZZ_2*\ZZ_2*\ZZ_2$.
\item Then, Lemma~\ref{lemma:bijection-subgroup-graph} translates the question of finding a free subgroup $H$ of $T$ into finding a non-degenerate $(T,X)$-graph $\cG$.
\item By Lemma~\ref{lemma:N-over-H-as-auto}, the condition that $N_T(H)\diagup H$ be isomorphic to ${\Gamma}$ is equivalent to the condition that the automorphism group of $\cG$ be isomorphic to ${\Gamma}$. Hence, the initial problem effectively reduces to finding a non-degenerate $(T,X)$-graph with a given automorphism group.
\item Starting with the Cayley graph $\Cay({\Gamma},S)$ of the finite group $\Gamma$, we replace its edges and vertices by certain pieces of non-degenerate $(T,X)$-graphs (defined in Section~\ref{sssection:substitution}), so that the automorphism group is preserved, while obtaining a valid Schreier graph for a finite-index free subgroup $H$ of $T$. 
\end{enumerate}

\subsection{Basic cases} \label{section:basiccases}

We start first by proving that $\ZZ_p*\ZZ_q$, with $p  \geq  3$, $q  \geq  2$, and $\ZZ_2*\ZZ_2*\ZZ_2$ are freely telescopic. These are the ``base cases'' for the general statement that follows in Theorem~\ref{thm:free-products-are-telescopic}.

\begin{prop}\label{prop:ZpZq-telescopic}
The free product $\ZZ_p \ast \ZZ_q$ is freely telescopic for any $p  \geq  3$ and $q  \geq  2$.
\end{prop}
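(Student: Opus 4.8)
The plan is to translate the proposition, via Lemma~\ref{lemma:bijection-subgroup-graph} and Proposition~\ref{lemma:N-over-H-as-auto}, into a purely combinatorial construction problem. Writing $T = \ZZ_p * \ZZ_q$ with $X = \{a,b\}$, where $a$ has order $p$ and $b$ has order $q$, it suffices to show that for every finite group $\Gamma$ there is a connected \emph{non-degenerate} $(T,X)$-Schreier graph $\cG$ whose group of label-preserving automorphisms (forgetting the basepoint) is isomorphic to $\Gamma$. Indeed, such a $\cG$ corresponds by Lemma~\ref{lemma:bijection-subgroup-graph} to a free finite-index subgroup $H \leq T$, and then Proposition~\ref{lemma:N-over-H-as-auto} gives $N_T(H)/H \cong \aut(\cG) \cong \Gamma$, which is precisely what freely telescopic demands.

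To build $\cG$ I would start from the Cayley graph $\Cay(\Gamma, S)$ for some finite generating set $S = \{t_1, \dots, t_k\}$ of $\Gamma$ with $\mathrm{id} \notin S$. The key classical input is that the group of label-preserving automorphisms of $\Cay(\Gamma, S)$ is \emph{exactly} $\Gamma$, acting by left translation $g \mapsto hg$: any colour-preserving automorphism is determined by the image of the identity vertex, since following $s$-edges forces $\phi(g) = \phi(\mathrm{id})\cdot g$ for all $g$, and $S$ generates $\Gamma$. This is the source of the target symmetry. The problem is then to realise this $k$-coloured graph inside a graph using only the two colours $a,b$, whose edges are constrained to close up into $p$-cycles and $q$-cycles, without creating or destroying symmetry. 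The mechanism is substitution: replace each vertex of $\Cay(\Gamma, S)$ by a fixed \emph{vertex gadget}, and each $t_j$-labelled directed edge by an \emph{edge gadget} $E_j$, where all gadgets are small non-degenerate $(T,X)$-graphs-with-boundary assembled from $a$- and $b$-cycles and glued along designated boundary vertices (the construction to be given in Section~\ref{sssection:substitution}).

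Three properties must be engineered into the gadgets. First, \textbf{local validity}: after all substitutions every vertex must carry exactly one outgoing and one incoming $a$-edge, and likewise for $b$, with these edges closing into cycles of length exactly $p$ and exactly $q$, so that $\cG$ is a genuine non-degenerate Schreier graph. Second, \textbf{directionality and tagging}: each edge gadget must be orientation-asymmetric so that it records the source/target of the original arrow, and the $k$ gadgets $E_1, \dots, E_k$ must be pairwise non-isomorphic, each carrying a combinatorial ``tag'' (for instance attached cycles of distinct, increasing complexity built from the available $p$- and $q$-cycles) that no automorphism can alter, so that the generator $t_j$ it encodes is remembered. Third, \textbf{rigidity}: each gadget must admit no nontrivial label-preserving automorphism fixing its boundary, so that any automorphism of $\cG$ is forced to permute the gadgets as indivisible blocks.

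Granting these properties, the endgame is a two-way check. Any automorphism of $\cG$ must send vertex gadgets to vertex gadgets and edge gadgets to edge gadgets (rigidity plus tags), respecting both the direction and the generator-label of each edge gadget, and hence descends to a label-preserving automorphism of $\Cay(\Gamma, S)$; conversely every left translation of $\Gamma$ lifts to $\cG$ because the substitution is applied uniformly. This gives $\aut(\cG) \cong \aut(\Cay(\Gamma, S)) \cong \Gamma$, completing the argument. The main obstacle I anticipate is the \emph{simultaneous} satisfaction of local validity and rigidity: the $p$- and $q$-cycle conditions are rigid global constraints forcing every vertex onto one cycle of each colour, so the gadgets cannot be designed freely, and verifying that distinguishing tags for the $k$ generators can be built within these constraints while introducing no spurious symmetry is the delicate combinatorial heart of the proof. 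This is also where the excluded case $\ZZ_2 * \ZZ_2$ fails: there both $a$- and $b$-cycles have length $2$, leaving too little room to assemble asymmetric, taggable, rigid gadgets.
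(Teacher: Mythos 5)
Your overall strategy is exactly the paper's: reduce via Lemma~\ref{lemma:bijection-subgroup-graph} and Proposition~\ref{lemma:N-over-H-as-auto} to constructing a connected non-degenerate $(T,X)$-graph with automorphism group $\Gamma$, and obtain it by substituting vertex- and edge-gadgets into $\Cay(\Gamma,S)$. The three properties you isolate (local validity, tagging, rigidity) correspond precisely to Proposition~\ref{prop:splitting-gluing} and to conditions S.\ref{S1} and S.\ref{S2} of Proposition~\ref{prop:graph-substitution}, so the plan is sound and matches the paper.

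There is, however, a genuine gap: you never construct the gadgets, and you yourself flag their construction as the delicate heart of the argument --- which it is, since it constitutes essentially the entire content of the paper's proof. The paper builds a specific non-degenerate graph $\cG_{p,q}$ out of $q$ red $p$-cycles joined by cyan $q$-cycles, splits one vertex to obtain an edge-link $\cL^e_{p,q}$ and two to obtain a vertex-link $\cL^v_{p,q}$, and crucially arranges that $\cL^e_{p,q}$ contains a \emph{unique} vertex $v$ with $v\cdot r=v\cdot c$ (the ``root''); this uniqueness is exactly where the hypothesis $p\geq 3$ enters, a point your proposal does not address (you only explain why $\ZZ_2*\ZZ_2$ fails globally, not what role $p\geq 3$ plays inside the gadget). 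The tag for the generator $s_i$ is then the number $i$ of edge-links chained into its edge-graph, i.e.\ the number of roots it contains. Moreover, your tagging condition (``the edge gadgets are pairwise non-isomorphic'') is weaker than what is actually required: condition S.\ref{S2} demands that no \emph{accidental} embedded copy of $\cE_{s_i}$ arise in the partially assembled graph $\cC^*_{(i)}$, possibly straddling several gadgets, and the paper's verification --- tracking how the roots $r_j(e)$ sit relative to the boundary vertices $h^{\pm}(e)$ along the words $w_1,w_2$ --- is a non-trivial argument rather than a formality. Without exhibiting concrete gadgets and carrying out these verifications, what you have is a correct outline of the paper's proof, not a proof.
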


\begin{prop}\label{prop:Z2Z2Z2-telescopic}
The free product $\ZZ_2 \ast \ZZ_2 \ast \ZZ_2$ is freely telescopic.
\end{prop}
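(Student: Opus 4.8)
The plan is to follow the four-step strategy of Section~\ref{proof-plan}, specialised to $T = \ZZ_2 \ast \ZZ_2 \ast \ZZ_2 = \langle x_1\rangle \ast \langle x_2\rangle \ast \langle x_3\rangle$ with $X = \{x_1,x_2,x_3\}$, in exact parallel with the construction behind Proposition~\ref{prop:ZpZq-telescopic} but using three involutive colours instead of two factors of orders $p,q$. By Lemma~\ref{lemma:bijection-subgroup-graph}, a free finite-index subgroup $H\le T$ is the same datum as a finite connected non-degenerate $(T,X)$-graph $\cG$; here non-degeneracy means that every $x_i$-labelled cycle has length exactly $2$, i.e.\ each $x_i$ acts on $V\cG$ as a fixed-point-free involution. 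Thus $\cG$ is precisely a connected cubic graph equipped with a proper $3$-edge-colouring into the three perfect matchings determined by $x_1,x_2,x_3$, with no monochromatic loops. By Proposition~\ref{lemma:N-over-H-as-auto} the requirement $N_T(H)/H\cong\Gamma$ translates into $\aut(\cG)\cong\Gamma$, where $\aut$ is taken among colour-preserving automorphisms. Hence it suffices, for an arbitrary finite group $\Gamma$ with generating set $S$, to build such a $\cG$ whose colour-preserving automorphism group is isomorphic to $\Gamma$.

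Next I would start from the Cayley graph $\Cay(\Gamma,S)$, whose colour-preserving automorphism group is exactly $\Gamma$ acting by left translation, and convert it into a cubic $3$-edge-coloured graph by the local substitution of Section~\ref{sssection:substitution}. Each vertex of $\Cay(\Gamma,S)$ is replaced by a vertex gadget with the appropriate number of attaching ports, and each oriented $s$-edge by an edge gadget, all assembled from the small $(T,X)$-pieces (the ``arm'' gadgets) defined there, with the three colours realising $x_1,x_2,x_3$. Since each factor is $\ZZ_2$, a colour carries essentially no local data beyond an unordered pairing, so the generator label $s\in S$ of an edge and its orientation must be encoded \emph{combinatorially}, through the length and internal pattern of a gadget's arms, rather than through colours. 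I would design the gadgets so that: distinct generators and distinct orientations yield pairwise non-isomorphic edge gadgets; every gadget is rigid, admitting no colour-preserving automorphism other than the identity once its attaching vertices are fixed; and the assembled graph is connected, cubic, properly $3$-edge-coloured, and free of degenerate (length-$1$) monochromatic cycles.

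The verification then proceeds in two directions. Because gadget types are mutually distinguishable and internally rigid, any colour-preserving automorphism $\phi$ of $\cG$ must send each gadget to a gadget of the same type, so $\phi$ descends to a colour- and orientation-preserving automorphism of $\Cay(\Gamma,S)$; conversely the substitution is $\Gamma$-equivariant, so every left translation lifts uniquely to a colour-preserving automorphism of $\cG$. This yields $\aut(\cG)\cong\Gamma$. Combined with the first paragraph, $\cG = \Sch_{T,X}(H)$ for a free finite-index subgroup $H\le T$ with $N_T(H)/H\cong\Gamma$, which proves that $T$ is freely telescopic; the degenerate case $\Gamma=\{1\}$ is covered by taking a single rigid base gadget with trivial automorphism group.

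The main obstacle is the gadget design together with its rigidity and distinguishability. With only three involutive colours there is very little local information available to encode simultaneously the generator and the orientation of each edge, and one must rule out any accidental colour-preserving symmetry that could swap two distinct edge gadgets or flip a gadget internally. The heart of the argument is therefore the combinatorial check that the specific pieces of Section~\ref{sssection:substitution} are pairwise non-isomorphic and internally rigid, and that their assembly neither introduces a degenerate $x_i$-cycle nor forces two edges of the same colour to meet at a vertex, so that the result is genuinely a non-degenerate, properly $3$-edge-coloured cubic $(T,X)$-graph.
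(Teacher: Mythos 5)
Your proposal reproduces the paper's strategy exactly — steps (1)--(4) of Section~\ref{proof-plan}: identify free finite-index subgroups of $T=\ZZ_2*\ZZ_2*\ZZ_2$ with connected non-degenerate $(T,X)$-graphs (properly $3$-edge-coloured cubic graphs with no monochromatic loops) via Lemma~\ref{lemma:bijection-subgroup-graph}, convert the normaliser condition into $\aut\cG\cong\Gamma$ via Proposition~\ref{lemma:N-over-H-as-auto}, and realise $\Gamma$ by substituting gadgets into $\Cay(\Gamma,S)$ via Proposition~\ref{prop:graph-substitution}. All of that framing is correct. But the proof has a genuine gap: you never construct the gadgets. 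You write that you ``would design'' edge and vertex gadgets that are rigid, pairwise distinguishable, and assemble into a non-degenerate graph, and you yourself identify this design as ``the heart of the argument'' — yet it is exactly the part that is missing. For $\ZZ_2*\ZZ_2*\ZZ_2$ this is not a routine verification: the paper explicitly calls this case ``mostly ad-hoc'', because with three involutive colours there is so little local structure that one needs a concrete certified example. The paper supplies it: the explicit $12$-vertex graph $\cG_{2,2,2}$ of Figure~\ref{fig:Link-Z2Z2Z2}, split at $v_7$ to form the edge-link and at $v_0$ to form the vertex-link, together with the root-detecting predicate $P_{2,2,2}(v)$, namely ``$v\cdot rgbgrgr=v$ and $v\cdot b=v\cdot g$'', which holds for exactly one vertex of the edge-link and for no vertex of the vertex-link (properties G.\ref{ppty:G1}--G.\ref{ppty:G3}). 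It is this unique-root property that powers the verification of condition \textbf{S.\ref{S2}} (no spurious embedded copies of an edge-graph), via the argument that consecutive roots in an edge-graph are joined by the fixed word $w_2w_1$ and that a putative extra root would force $h^-(e)$ to coincide with an interior vertex or with some $h^+(e')$.

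A secondary imprecision: you ask for edge gadgets that are ``pairwise non-isomorphic'' across generators and orientations, but abstract non-isomorphism of the gadgets is neither quite what is needed nor obviously sufficient; the relevant condition is \textbf{S.\ref{S2}}, which forbids \emph{any} embedded copy of $\cE_{s_i}$ in $\cC^*_{(i)}$ other than the intended ones — including copies that straddle several gadgets. The paper meets this by chaining $i$ copies of the \emph{same} rigid edge-link to encode the label $s_i$ and then counting roots along $(w_2w_1)$-paths, rather than by using structurally distinct gadgets per generator. Without an explicit construction and a proof of the analogue of G.\ref{ppty:G1}--G.\ref{ppty:G3} and \textbf{S.\ref{S1}}--\textbf{S.\ref{S2}}, the argument does not yet establish the proposition.
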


The proofs of these two results rely on a LEGO-like construction using pieces of non-degenerate $(T,X)$-graphs, that we produce below.
Once this is done, and the necessary properties of the construction hold, the proofs will follow easily, cf. Section~\ref{sssection:proof-of-telescopicity}.

\subsubsection{Vertex splitting and gluing}
Let us consider a Schreier graph $\cS = \Sch_{T,X}(H)$ of a subgroup $H \leq T$, where $T$ has generating set $X$. If $Y \subsetneq X$, a vertex $v$ of $\cS$ is \emph{split} along $Y$ if $v$ is replaced by two vertices $v_Y,v_{X-Y}$, where $v_Y$ keeps the $Y$-coloured edges of $v$, and $v_{X-Y}$ keeps its $(X-Y)$-coloured edges, as shown in Figure~\ref{fig:vertex-splitting}. We shall call $v_Y$ a \textit{dangling} $Y$-coloured vertex. Observe that splitting vertices breaks the $X$-regularity of the graph. If $u_Y$ and $v_{X-Y}$ are, respectively, $Y$- and $(X-Y)$-coloured dangling vertices we say that $u_Y$ and $v_{X-Y}$ are \textit{complementary}, and were we to identify them, we would gain regularity back at the newly created vertex. With this idea in mind, dangling vertices are seen as ``connection points'' for our graphs: an $Y$-coloured dangling vertex can only be connected to an $(X-Y)$-dangling vertex, and once all dangling vertices of a graph are connected, the resulting graph is $X$-regular. Let us call the identification of complementary vertices \emph{gluing}.

Finally, if $\cS$ is a Schreier graph of a free subgroup of $T$, it has no degenerate cycles. Since the operations of splitting vertices and gluing complementary ones do not change the lengths of cycles of any given colour, as soon as regularity is gained back by gluing all dangling vertices of some split graph, one gets the Schreier graph of a \emph{free} subgroup once again.

The following is now essentially obvious from the above considerations. 
\begin{prop}\label{prop:splitting-gluing}
Let us choose $Y \varsubsetneq X$ and a finite number of Schreier graphs $\cS_i = \Sch_{T,X}(H_i)$ of free, finite-index subgroups $H_i$ of $T$. Consider their disjoint union ${\sqcup}\cS_i$, in which we split a certain number of vertices along $Y$, and glue them, in complementary pairs, so that the resulting graph is connected. Then we obtain a Schreier graph for a free, finite-index subgroup of $T$.
\end{prop}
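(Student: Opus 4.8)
The plan is to verify that the graph $\cG$ produced by the splitting-and-gluing construction satisfies the four properties characterising Schreier graphs of free finite-index subgroups in Lemma~\ref{lemma:bijection-subgroup-graph}, namely: connectedness, $X$-regularity, non-degeneracy, and finiteness of the vertex set. The conclusion then follows immediately by invoking that bijection after fixing an arbitrary basepoint (the resulting subgroup being determined up to conjugacy, with index equal to $|V\cG|$). Two of these are essentially free: connectedness is assumed in the hypothesis, and finiteness holds because we begin with a finite disjoint union of finite graphs, splitting replaces finitely many vertices by pairs, and gluing only identifies vertices, so $|V\cG|$ remains finite.

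First I would establish $X$-regularity, checking only the vertices touched by the operations, since any untouched vertex retains its regularity from the corresponding $\cS_i$. After splitting $v$ along $Y$, the offspring $v_Y$ and $v_{X-Y}$ carry respectively exactly the $Y$- and $(X-Y)$-coloured edges of $v$; each is thus regular in its own colours but lacking the complementary ones — these are precisely the dangling vertices. Gluing a $Y$-dangling vertex to a complementary $(X-Y)$-dangling one then yields a vertex with one incoming and one outgoing edge for every generator in $Y$ and for every generator in $X\setminus Y$, hence for every generator in $X$: regularity is restored. Since by hypothesis all dangling vertices are glued off in complementary pairs, every vertex of $\cG$ is $X$-regular.

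Next I would check non-degeneracy, i.e. that for each $s_i$ every $s_i$-coloured cycle of $\cG$ has length exactly $p_i$ when $p_i < \infty$. The key point is that neither operation creates, destroys, or merges monochromatic cycles; it merely relocates the endpoints of already-existing edges. Splitting $v$ along $Y$ sends each monochromatic edge at $v$ to whichever offspring carries its colour, so the $s_i$-cycle through $v$ is transported unchanged. For gluing, the decisive observation is that a $Y$-dangling vertex and its complementary $(X\setminus Y)$-dangling partner carry \emph{disjoint} colour sets; hence for any fixed $s_i$ at most one of the two identified vertices carries an $s_i$-edge. Consequently a single gluing can never fuse two distinct $s_i$-cycles — the merged vertex simply sits on the unique $s_i$-cycle it already belonged to. Therefore every monochromatic cycle length is inherited verbatim from the graphs $\cS_i$, which are non-degenerate because the $H_i$ are free.

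With all four properties in hand, Lemma~\ref{lemma:bijection-subgroup-graph} identifies $\cG$ (with any chosen basepoint) as $\Sch_{T,X}(H)$ for a free subgroup $H \leq T$ of finite index $|V\cG|$. The only genuinely delicate point I expect is the bookkeeping in the non-degeneracy argument: one must make the ``disjoint colour sets'' observation precise enough to rule out the fusion of two $s_i$-cycles under a gluing, which is exactly what the restriction to \emph{complementary} pairs (a $Y$-dangling vertex glued to an $(X\setminus Y)$-dangling vertex) guarantees. Everything else reduces to the fact that splitting and gluing rearrange the endpoints of edges without ever adding or deleting an edge of any given colour.
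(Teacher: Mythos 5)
Your proof is correct and follows essentially the same route as the paper, which declares the proposition ``essentially obvious'' from the preceding discussion of how gluing complementary dangling vertices restores $X$-regularity and how splitting/gluing preserve the lengths of monochromatic cycles, before invoking Lemma~\ref{lemma:bijection-subgroup-graph}. You have merely made explicit the bookkeeping (finiteness, and the fact that complementary vertices carry disjoint colour sets so no two $s_i$-cycles can fuse) that the paper leaves implicit.
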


    \begin{figure}
        \centering
        \begin{tikzpicture}
            \node (v) at ( 0, 0) {$v$}; 
            \node (ro) at (-1, -1) {};
            \node (rt) at (-1,1) {};
            \node (bo) at (-1,-2) {};
            \node (bt) at (-1,0) {};
            \node (go) at (1,1) {};
            \node (gt) at (1,-1) {};

            \begin{scope}[every path/.style={->}]
                \draw[red] (ro) -- (v);
                \draw[blue] (bo) -- (v); 
                \draw[green] (go) -- (v);
                \draw[blue] (v) -- (bt);
                \draw[red] (v) -- (rt);
                \draw[green] (v) -- (gt);
            \end{scope}  
        \end{tikzpicture}
        \qquad
        \begin{tikzpicture}
            \node (v) at ( 0, 0) {$v_{rb}$}; 
            \node (u) at ( 1, 0) {$v_g$}; 
            \node (ro) at (-1, -1) {};
            \node (rt) at (-1,1) {};
            \node (bo) at (-1,-2) {};
            \node (bt) at (-1,0) {};
            \node (go) at (2,1) {};
            \node (gt) at (2,-1) {};

            \begin{scope}[every path/.style={->}]
                \draw[red] (ro) -- (v);
                \draw[blue] (bo) -- (v); 
                \draw[green] (go) -- (u);
                \draw[blue] (v) -- (bt);
                \draw[red] (v) -- (rt);
                \draw[green] (u) -- (gt);
            \end{scope}  
        \end{tikzpicture}
        \caption{Before and after splitting an $X$-regular graph, with $X = \{\text{\textbf{r}ed}, \text{\textbf{g}reen}, \text{\textbf{b}lue}\}$, at a vertex $v$ along $Y = \{r,b\}$}.
        \label{fig:vertex-splitting}
    \end{figure}
    
\subsubsection{Sketch of the construction}
Below we explain the main idea of the construction.  Let $T$ be either $\ZZ_p*\ZZ_q$ or $\ZZ_2*\ZZ_2*\ZZ_2$, with $X$ its natural set of generators, and ${\Gamma}$ be a finite group generated by a set $S$. Let us choose two $X$-coloured graphs, say $\cL_e$ and $\cL_v$ obtained by splitting, respectively, one and two vertices in the Schreier graph of a finite-index free subgroup of $T$, i.e. a non-degenerate $(T,X)$-graph. Call $\cL_e$ an \emph{edge-link} and $\cL_v$ a \emph{vertex-link}. We shall also choose an easily identifiable and \textit{unique}, as we shall see in the sequel, vertex of $\cL_e$ to be its \textit{root}, denoted $r(\cL_e)$.

We shall connect edge-links by gluing complementary vertices, so as to connect them into chains, and call the result \emph{edge-graphs}. Similarly, we shall connect vertex-links but in a way to produce cycles of them, and call the result \emph{vertex-graphs}. Finally, in the Cayley graph $\cC:= \Cay({\Gamma},S)$ of ${\Gamma}$, we shall replace the edges by edge-graphs and the vertices by vertex-graphs, following the procedure of Section~\ref{sssection:substitution}. Our construction will ensure that
\begin{itemize}
\item[(i)] the automorphism group $\aut \cC^*$ of the resulting graph $\cC^*$ is the same as the automorphism group of $\cC$, which is exactly ${\Gamma}$ (by Proposition~\ref{prop:graph-substitution}); and 
\item[(ii)] the graph $\cC^*$ is actually the Schreier graph of a finite-index free subgroup of $T$ (by Proposition~\ref{prop:splitting-gluing}).
\end{itemize}
    
From now on, if $v$ is a vertex of a \emph{folded} $X$-coloured graph, we write $v\cdot x_{1} \dots x_n$ for the terminus of the unique path labelled $x_{1}, \dots ,x_n$ and starting at $v$, if it exists. Then, an equality of the form $v\cdot x_{1} \dots x_n = u\cdot y_{1} \dots y_n$ holds if and only if  both paths in question exist and their termini are equal.  

\subsubsection{Constructing the links}\label{section:constructionofthelinks}
        
Our construction differs slightly for $T = \ZZ_p*\ZZ_q$ and $\ZZ_2*\ZZ_2*\ZZ_2$. In the former case, one uses a relatively generic construction, while the latter is mostly ad-hoc.

\medskip

\paragraph{\textbf{Case} $T = \ZZ_p*\ZZ_q$} Let us fix any  $p \geq 3$ and $q \geq 2$, let the corresponding generators of each free factor of $T$ be \textbf{r}ed and \textbf{c}yan and consider $q$ copies of a $p$-cycle with edge labels $r$ and vertices $v_{0,i}, \dots, v_{p-1,i}$ for each $i$-th copy, where $0{\leq}i{\leq}q-1$. Next, add two ``special'' $q$-cycles labelled $c$:
\[
v_{0,0} {\cto} v_{1,0} {\cto} v_{1,1} {\cto} v_{1,2} {\cto} \dots {\cto} v_{1,q-2} {\cto} v_{0,0},
\]
and
\[
v_{0,1} {\cto}  v_{0,2} {\cto}  \dots {\cto} v_{0,q-1} {\cto} v_{1,q-1} {\cto} v_{0,1}.
\]
Then, for each $j \geq 2$, draw an extra $q$-cycle labelled $c$:
\[
v_{j,0} {\cto} v_{j,1} {\cto} \dots {\cto} v_{j,q-1} {\cto} v_{j,0}.
\]
In the case $q=2$, the extra $q$-cycles have the form $v_{0,0}{\cto} v_{1,0} {\cto} v_{0,0}$ and $v_{0,q-1}{\cto} v_{1,q-1} {\cto} \ldots {\cto} v_{0,q-1}$. Let $\cG_{p,q}$ denote the resulting non-degenerate $(\ZZ_p*\ZZ_q, \{r,c\})$-graph.

Now, split the vertex $v_{0,0}$ in order to produce an \textit{edge-link} denoted $\cL_{p,q}^e$ and, subsequently, split the vertex $v_{0,q-1}$ to get a \textit{vertex-link} $\cL_{p,q}^v$. The vertices obtained by splitting $v_{0,0}$ will be denoted $v_{+}, v_{-}$, and those obtained by splitting $v_{0,q-1}$ will be called $u_+, u_-$. In the sequel, we swap the assignment of $v_+$ and $v_-$ vertices for edge- and vertex-links, as shown in Figures~\ref{fig:Link-Z3Z2}-\ref{fig:Link-Z2Z2Z2}. This allows us to keep a consistent and clear notation for all associated objects. 

In the edge-link $\cL_{p,q}^e$, the vertex $v_{0,q-1}$ is unique in the following sense: this is the only vertex $v \in V\cL_{p,q}^e$ that satisfies $v\cdot r=v\cdot c$. Observe that the assumption $p \geq 3$ is important here: if $p=q=2$, then once $v\cdot r=v\cdot c$, the vertex $w=v\cdot r$ also satisfies $w\cdot r = w\cdot c$.  Let then $r(\cL_{p,q}^e) := v_{0,q-1}$ be called the \textit{root}\footnote{In the case of an edge- or vertex-link the notion of a root is practically opposed to the notion of a root in a graph (map, hypermap, etc.). Indeed, the former is intrinsic to the respective combinatorial structure, while the latter is a matter of choice and can be assigned arbitrarily} of $\cL_{p,q}^e$.

\begin{figure}[h]
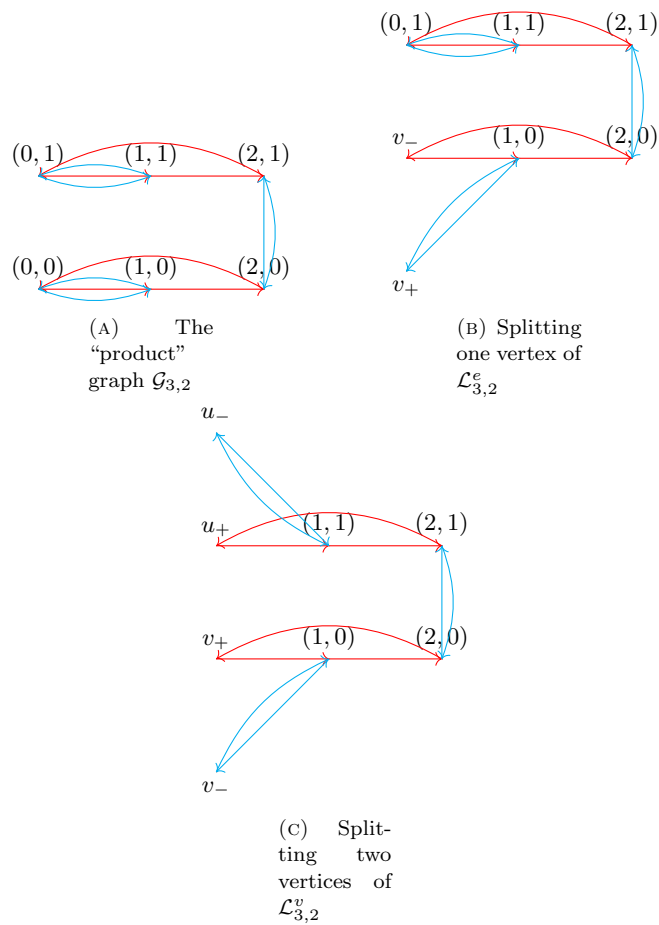

\centering
\subfloat[The ``product'' graph $\cG_{3,2}$]{\tikzproductgraph{2}{3}{0}}
\qquad
\subfloat[Splitting one vertex of $\cL_{3,2}^e$]{\tikzproductgraph{2}{3}{1}}
\qquad
\subfloat[Splitting two vertices of $\cL_{3,2}^v$]{\tikzproductgraph{2}{3}{2}}
\caption{The ``link'' graphs for $T = \ZZ_3*\ZZ_2$: (a) the original graph; (b) the result of splitting $(0,0)$; (c) the result of splitting $(0,0)$ and $(0,1)$. The root of the edge link is $(0,1)$.}
\label{fig:Link-Z3Z2}
\end{figure}

Some examples of ``product graphs'' and their splitting at one and two vertices that generate edge- and vertex-link graphs are depicted in Figures~\ref{fig:Link-Z3Z2}--\ref{fig:Link-Z4Z4}.

\begin{figure}[h]
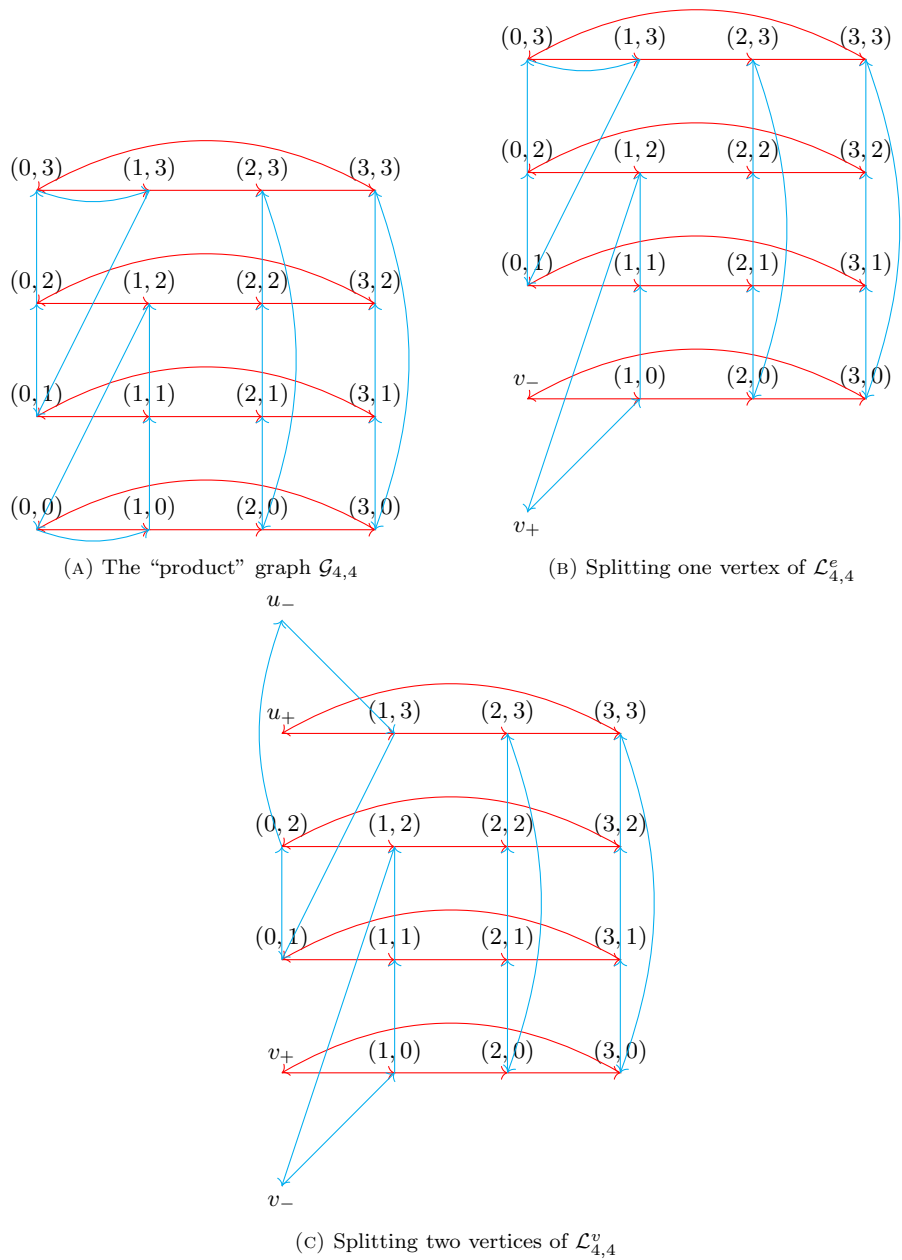

\centering
\subfloat[The ``product'' graph $\cG_{4,4}$]{\tikzproductgraph{4}{4}{0}}
\qquad
\subfloat[Splitting one vertex of $\cL_{4,4}^e$]{\tikzproductgraph{4}{4}{1}}
\qquad
\subfloat[Splitting two vertices of $\cL_{4,4}^v$]{\tikzproductgraph{4}{4}{2}}
\caption{The ``link'' graphs for $T = \ZZ_4*\ZZ_4$: (a) the original graph; (b) the result of splitting $(0,0)$; (c) the result of splitting $(0,0)$ and $(0,3)$. The root of the edge link is $(0,3)$.}
\label{fig:Link-Z4Z4}
\end{figure}

\medskip

\paragraph{\textbf{Case} $T = \ZZ_2*\ZZ_2*\ZZ_2$} Let the generators of the group $T$ be \textbf{r}ed, \textbf{g}reen and \textbf{b}lue. Let $\cG_{2,2,2}$ denote the graph (a) in Figure~\ref{fig:Link-Z2Z2Z2}. In $\cG_{2,2,2}$, first split the vertex $v_7$ to get an edge-link that we call $\cL_{2,2,2}^e$, and then split $v_0$ to get a vertex-link called $\cL_{2,2,2}^v$. The vertices obtained by splitting the vertex $v_7$ will be denoted $v_+, v_-$, and those obtained by splitting the vertex $v_0$ will be called $u_+, u_-$.
        
Observe that the vertices $v_0$ and $v_1$ are unique in $\cL_{2,2,2}^e$ in the following sense: they are the only vertices $v \in V\cL_{2,2,2}^e$ satisfying $v\cdot b=v\cdot g$. Furthermore, one can distinguish $v_0$ from $v_1$ as follows: while $v_{1}\cdot rgbgrgr = v_{1}$, it is not the case for $v_0$. In other words, the path labelled $rgbgrgr$ and starting at $v_{1}$ is a loop, but the one identically labelled and starting at $v_{0}$ is not. Let then $r(\cL_{2,2,2}^e) := v_{1}$ be the \textit{root} of $\cL_{2,2,2}^e$.

        \begin{figure}
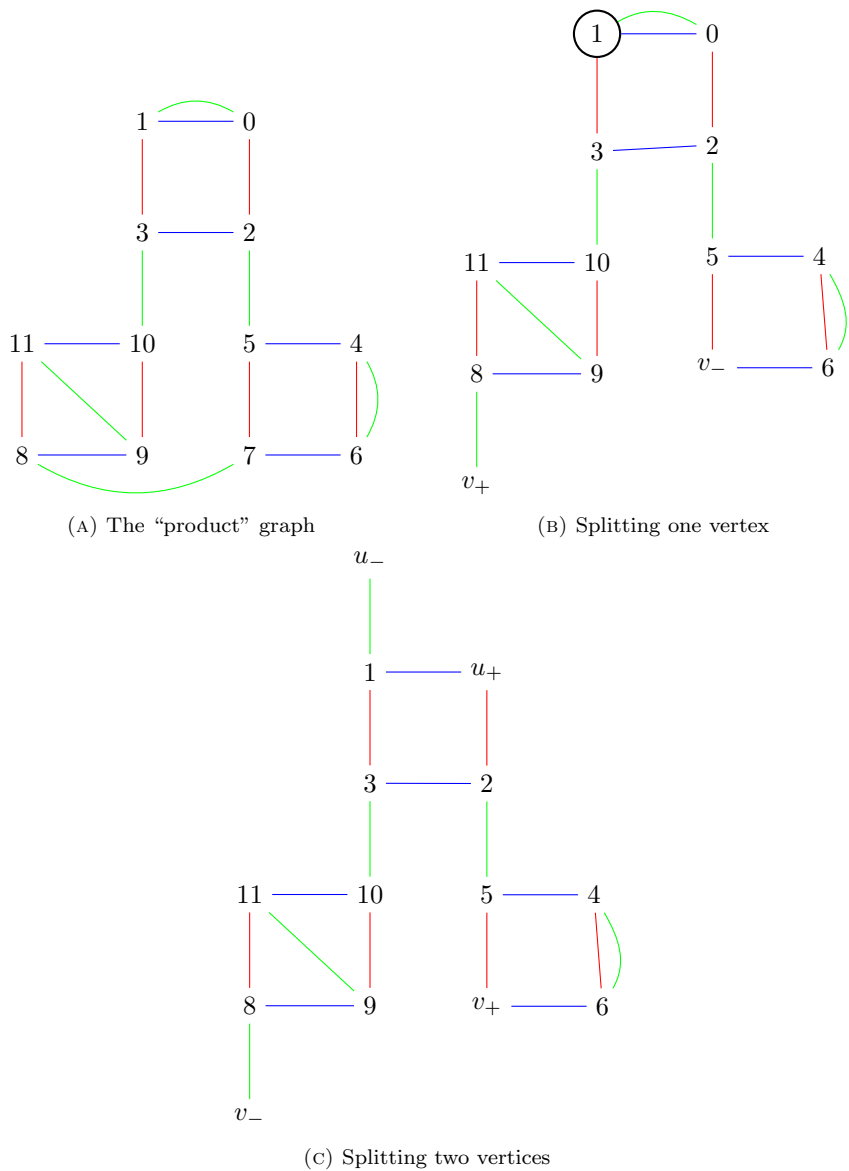

            \centering
            \subfloat[The ``product'' graph]{\tikztttzarms}
            \qquad
            \subfloat[Splitting one vertex]{\tikzttttarms}
            \qquad
            \subfloat[Splitting two vertices]{\tikztttfarms}
            \caption{The ``link'' graphs for $T = \ZZ_2*\ZZ_2*\ZZ_2$: (a) the original graph; (b) the result of splitting $7$; (c) the result of splitting $0$ and $7$. The root of the edge link is encircled.}
            \label{fig:Link-Z2Z2Z2}
        \end{figure}

Let us write $\cL^v$ for any of $\cL^v_{p,q}$ and $\cL^v_{2,2,2}$, and $\cL^e$ for any of $\cL^e_{p,q}$ and $\cL^e_{2,2,2}$. Since the following constructions do not depend on the exact nature of those graphs, but rather on their abstract properties, this ambiguity is harmless.

\medskip

\paragraph{\textbf{On the behaviour of roots}} Let us define the following conditions
\begin{alignat*}{10}
P_{p,q}(v) &:= \text{$``v\in V\cL_{p,q}^e$ satisfies $v\cdot r = v\cdot c$''},\\
P_{2,2,2}(v) &:= \text{``$v\in V\cL_{2,2,2}^e$ satisfies $v\cdot rgbgrgr$ = $v$ and $v\cdot b = v\cdot g$''},
\end{alignat*}
and observe that the following statements hold for $P = P_{p,q}$ or $P_{2,2,2}$, whichever is appropriate.

\begin{enumerate} 
\renewcommand{\labelenumi}{\textbf{G.\theenumi}}
\item \label{ppty:G1} No vertex of a vertex-link $\cL^v$ satisfies $P$, and exactly one vertex $r(\cL^e)$ of an edge-link $\cL^e$ does.
\item \label{ppty:G2} Gluing edge- and vertex-links together by identifying complementary dangling vertices does not create new vertices satisfying $P$, as long as the gluing is done on vertices with disjoint neighbourhoods (where the neighbourhood of a vertex $v$ is the set of adjacent vertices.)
\item \label{ppty:G3} If ${\iota}:\cA {\hookrightarrow} \cB$ is an embedding of folded graphs (in our case $\cA$ and $\cB$ will be obtained by gluing vertex- and edge-links), then the image of a vertex satisfying $P$ also satisfies $P$. 
\end{enumerate}

Of the above, G.\ref{ppty:G1} holds by construction of the vertex- and edge-links and G.\ref{ppty:G3} is evident from the fact that ``following the labels'' and ``passing to the image under an embedding'' are commuting operations. Only G.\ref{ppty:G2} is not as direct. Let $u_Y$ and $v_{X-Y}$ be complementary dangling vertices with disjoint neighbourhoods. Then, letting $x$ be a label in $X-Y$, $y$ be a label in $Y$,  and $w$ be the result of gluing $u_Y$ to $v_{X-Y}$, we get $w\cdot x = v_{X-Y} \cdot x$ and $w\cdot y = u_Y\cdot y$, which are distinct by the hypothesis. Since both conditions $P_{p,q}$ and $P_{2,2,2}$ involve equalities of the form $v\cdot x = v\cdot y$, it follows that no glued complementary vertices can satisfy them.

\subsubsection{Constructing vertex- and edge-graphs}\label{sssection:edge-and-vertex-graphs}

First of all, let us introduce some necessary notation, which will also be used in Section~\ref{sssection:substitution}, later on.
        
A \emph{vertex-graph} is a graph $\cV$, along with an injection ${\chi}:S{\times}\{+,-\}{\hookrightarrow} V\cV$, an example of which is depicted in Figure~\ref{fig:vertex-graph-5gen}. Let the \textit{boundary} of a vertex-graph be $\partial \cV = \im \chi$. 

An \emph{edge-graph} for a label $s$ is a graph $\cE_s$, along with two distinguished vertices $h^{+}(\cE_s)$ and $h^{-}(\cE_s)$, as shown in Figure~\ref{fig:edge-graph}. We shall provide the general definitions of the graphs $\cV$ and $\cE_s$ below, which will be case-specific for different choices of $T$. 

\begin{figure}[ht]
\centering
        \begin{tikzpicture}
            \pgfmathtruncatemacro{\len}{5};
            \pgfmathtruncatemacro{\width}{2};
            \pgfmathsetmacro{\shift}{0.7};
            \pgfmathtruncatemacro{\baseangle}{360/(\len+1)};
            \foreach \i in {0,...,\len}
            {
                \pgfmathsetmacro{\left}{\shift};
                \pgfmathtruncatemacro{\ip}{\i+1};
                \pgfmathsetmacro{\cent}{\shift+(\width/2)};
                \pgfmathsetmacro{\right}{\shift+\width};
                \pgfmathsetmacro{\rot}{\i*\baseangle};
                \begin{scope}[rotate=\rot]
                    \draw[rounded corners=0.5cm] (\left,-1) rectangle  (\right,1);
                    \draw[fill, red] (\right,0.5) circle [radius=2pt];
                    \node at (1.2*\right, \left) {$u_+^{(\ip)}$};
                    \draw[fill, cyan] (\right,-0.5) circle [radius=2pt];
                    \node at (1.2*\right,-0.5) {$v_-^{(\ip)}$};
                    \draw[fill, gray] (\cent,-1) circle [radius=2pt];
                    \draw[fill, gray] (\cent,1) circle [radius=2pt];
                \end{scope}
            }
        \end{tikzpicture}

\caption{The vertex-graph $\cV$ for $|S|=6$. The squares represent the connected vertex-links $\cL^v$, the gray dots represent the vertices glued along the way, while the cyan and red dots represent the remaining dangling vertices, which are exactly the images of $\chi$.}\label{fig:vertex-graph-5gen}
\end{figure}
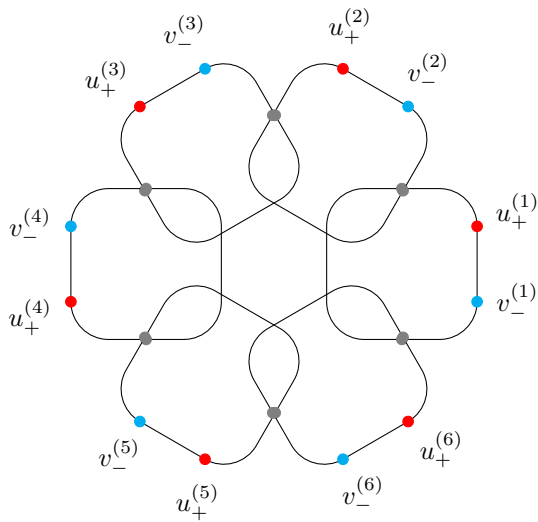

In the Cayley graph $\cC := \Cay({\Gamma},S)$ of ${\Gamma}$, each vertex $v$ will be replaced by an isomorphic copy $\cV_v$ of $\cV$, and each edge $e$ labelled $s$ will be replaced by an isomorphic copy $\cE_e$ of $\cE_s$. We shall also make the following identifications:
        \[
            \cE_e \ni h^{+}(\cE_s) \sim {\chi}(s,+)  \in  \cV_u, \qquad \cE_e \ni h^{-}(\cE_s) \sim {\chi}(s,-)  \in  \cV_v,
        \]
whenever $e$ has label $s$, origin $u$ and terminus $v$, where $h^{\pm}(\cE_s) {\in} \cE_e$ means a copy of the vertex $h^{\pm}(\cE_s)$ inside $\cE_e$, and similarly for other instances of vertex- and edge-graphs. 

Let $\cC^*$ denote the resulting graph, and let ${\iota}_e:\cE_s {\hookrightarrow} \cC^*$ and ${\iota}_v:\cV {\hookrightarrow} \cC^*$ be the embeddings corresponding to an edge $e \in V\cC$ (with $\mu(e) = s$) and a vertex $v\in V\cC$, respectively. The image ${\iota}_e(\cE_s)$ will be denoted by $\cE_e$ and the image ${\iota}_v(\cV)$ will be called $\cV_v$. Let $h^{\pm}(e)$ be the image of $h^{\pm}(\cE_s)$ under ${\iota}_e$.

Finally, if an ordering on the labels $s_{1} < \dots < s_n$ is given, then let 
\[
    \cC^*_{(i)} := \left(\bigcup_{v{\in}V\cC} \cV_v \right) \cup \left(\bigcup_{j{\leq}i} \bigcup_{\substack{e{\in} E\cC\ \\ \text{s.t.} {\mu}(e)=s_j}} \cE_e \right),
\]
which means that $\cC^*_{(i)}$ is the subgraph of $\cC^*$ consisting of the vertex-graphs and \textit{only} the edge-graphs corresponding to the edges with labels $s_j$ for $j{\leq}i$.

From now on, assume that the generating set $S = \{s_{1}, \dots, s_n\}$ is ordered: $s_1 < s_2 < \dots < s_n$. First, we consider the vertex- and edge-graphs that we use in the case $T = \mathbb{Z}_p*\mathbb{Z}_q$, with $p \geq q$, $p\geq 3$, $q\geq 2$. 

\medskip

\paragraph{\textbf{Vertex-graphs}} 
Let us take $|S|$ disjoint copies of the vertex-link $\cL^{v}$ from Section~\ref{section:constructionofthelinks}, and call them $\cL_v ^{(0)}, \dots, \cL_v ^{(|S|-1)}$, respectively. Observe that each graph $\cL_v ^{(i)}$ has four dangling vertices $v^{(i)}_+,v^{(i)}_-$, $u^{(i)}_+,u^{(i)}_-$. In order to create the vertex graph $\cV$, identify each $v^{(i)}_+$ to $u^{((i+1) \mod |S|)}_-$, for $0{\leq}i{\leq}|S|-1$. Let us also define $\chi : S{\times}\{+,-\} \hookrightarrow V\cV$ as $\chi((s_i,+))=u_+^{(i)}$ and $\chi((s_i,-))=v_-^{(i)}$. For a sketch of the resulting graph, see Figure~\ref{fig:vertex-graph-5gen}. 
        
Note that in the case $|S|=1$, only one vertex-link is used, and the fact that $v^{(0)}_{+}$ is glued to $u^{(0)}_{-}$, and not to $v^{(0)}_{-}$, ensures that the hypothesis G.\ref{ppty:G2} holds, so that no extra root appears.

\begin{figure}[ht]
\centering
        \begin{tikzpicture}
            \foreach \i in {1,...,4}
            {
                \pgfmathtruncatemacro{\left}{2*\i};
                \pgfmathtruncatemacro{\cent}{(2*\i)+1};
                \pgfmathtruncatemacro{\right}{(2*\i)+2};
                \draw[rounded corners=0.5cm] (\left,0) rectangle (\right,2);
                \draw[fill, gray] (\left,1) circle [radius=2pt];
                \node[below right] at (\left,1) {$v_+^{(\i)}$};
                \draw[fill, gray] (\right,1) circle [radius=2pt];
                \node[below left] at (\right,1) {$v_-^{(\i)}$};
            }
        \draw[fill, cyan] (2,1) circle [radius=2pt];
        \draw[fill, red] (10,1) circle [radius=2pt];
		\node[below left] at (2,1) {$h^+(e)$};        
        \node[below right] at (10,1) {$h^-(e)$};  
        \end{tikzpicture}
\caption{An edge-graph of length $4$. The squares represent the connected edge-links $\cL^e$, and the cyan and red dots represent the remaining dangling vertices, which are exactly $h^{+}(e)$ and $h^{-}(e)$.}\label{fig:edge-graph}
\end{figure}
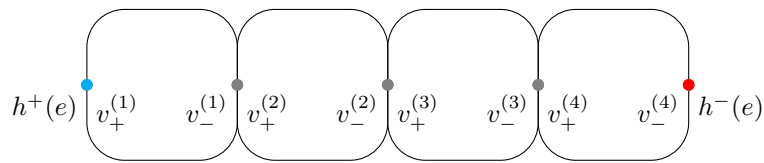

\medskip

\paragraph{\textbf{Edge-graphs}}
For each $s_i{\in}S$, take $i$ copies of the edge-link $\cL^{e}$ from Section~\ref{section:constructionofthelinks}, and call them $\cL_e ^{(1)}, \dots ,\cL_e ^{(i)}$, respectively. Observe that each graph $\cL_e ^{(j)}$ has two dangling vertices $v^{(j)}_+,v^{(j)}_-$. Now, glue each $v^{(j)}_{-}$ to $v^{(j+1)}_{+}$, for $1 \leq j \leq i-1$. Let $\cE_i$ denote the resulting edge-graph, a sketch of which is depicted in Figure~\ref{fig:edge-graph}. The vertex $v^{(1)}_+$ will be referred to as $h^{+}(e)$ and $v^{(i)}_-$ as $h^{-}(e)$.

Note that the exact same construction works for $T=\ZZ_2*\ZZ_2*\ZZ_2$, when using the corresponding objects, i.e. when $\cL^v_{p,q}$ is replaced by $\cL^v_{2,2,2}$ and $\cL^e_{p,q}$ is replaced by $\cL^e_{2,2,2}$. 

In both cases, letting $P$ be the appropriate condition, i.e. either $P_{p,q}$ or $P_{2,2,2}$, each edge graph $\cE_i$ contains exactly $i$ vertices satisfying $P$: they are exactly the copies of the root vertex $r(\cL^e)$ satisfying $P$ in the edge-link $\cL^e$.  Let $r_1(s_i), \dots ,r_i(s_i)$, where $r_{j}(s_i) {\in} \cL_{e}^{(j)}$, denote those roots. Observe that the vertex-graphs do not contain any vertex satisfying $P$ and that $\aut(\cE_i) = \{\mathrm{id}\}$, where the former follows by construction and the latter follows from foldedness. 

\subsubsection{Substitution}\label{sssection:substitution-works}

Let $\cC=\Cay({\Gamma},S)$ be the Cayley graph of a finite group ${\Gamma}$ with respect to a generating set $S= \{s_1, s_2, \dots, s_n\}$. Our goal is to prove that $\aut(\cC^{*}) \cong \aut(\cC)$ which we can do by using the argument of Proposition~\ref{prop:graph-substitution} (cf. Section~\ref{graph-substitution} for its proof). Thus, we only have to show that the following conditions hold:

\begin{description}
\item[S.\ref{S1}] $\{ \phi {\in} \aut \cV :  {\phi}|_{ {\partial}\cV } = \mathrm{id}_{{\partial} \cV} \} = \{ \mathrm{id}_{\cV} \}$;
\item[S.\ref{S2}] one can order the labels $s_{1}, \dots, s_n$  in such a way that for any $i\in\{0, \dots ,n\}$, $\cC^*_{(i)}$ contains no subgraph isomorphic to $\cE_{s_{i}}$ except for the subgraphs $\cE_{e}$, for $e\in E\cC,{\mu}(e)=s_{i}$.
\end{description}

In the edge-link $\cL^e$, choose a shortest path from $v_{+}$ to $r(\cL^e)$ and from $r(\cL^e)$ to $v_{-}$, and let $w_{1}$ and $w_{2}$ be their respective labels.

\begin{description}
\item[S.\ref{S1}] Recall that $\cV$ is a folded graph. Let ${\phi} \in \aut \cV$ be an automorphism with ${\phi}|_{{\partial}\cV} = \mathrm{id}_{{\partial}\cV}$, and let $v\in{\partial}\cV$ be a vertex in the boundary of $\cV$. Then, since ${\phi}$ and $\mathrm{id}_{\cV}$ agree at $v$, and $\cV$ is folded, we have ${\phi}= \mathrm{id}_{\cV}$. Thus \textbf{S.\ref{S1}} holds.
\item[S.\ref{S2}]  Fix $i \in \{ 0, \dots, n\}$ and consider $\cC^*_{(i)}$.  Our goal is to show that $\cC^*_{(i)}$ contains no other copies of $\cE_i$ than those of the form $\cE_e$, for $e$ an edge with label $\mu(e) = s_i$. Let $r:=r_1(s_i)$ be the root of the first edge-link in $\cE_i$. Suppose that there exists an embedding $\iota : \cE_i \hookrightarrow \cC^{*}_{(i)}$, and let us show that $\iota(\cE_i)$ is equal to ${\iota}_e(\cE_i)$ for some $e$ with label $\mu(e) = s_i$.

We know that $\iota(r)$ lies in an edge-graph because $\iota(r)$ satisfies property $P$ by~G.\ref{ppty:G3} and no vertex of a vertex-graph satisfies property $P$. Let $e\in \cE_e$ be the edge such that $\iota(r)$ lies in $\cE_{e}$, and $s_k := {\mu}(e)$. It is essential to notice that in $\cE_i$, for any $1{\leq}j{\leq}i-1$, we have $r_j(s_i)w_{2}w_{1} = r_{j+1}(s_i)$. Similarly, in $\cE_e$, for any $1{\leq}j{\leq}k-1$, it holds that $r_j(e)w_{2}w_{1} = r_{j+1}(e)$. A sketch of how the roots and boundary vertices are arranged within an edge graph is given in Figure~\ref{fig:edge-graph-embedding}. 

We shall show that $e$ has label $s_i$ and ${\iota}(r) = r_1(e)$. This is enough to conclude that ${\iota}(\cE_i)$ equals ${\iota}_e(\cE_i)$, since, by assumption, the embeddings ${\iota}$ and ${\iota}_{e}$ agree at a vertex $r \in \cE_i$, and thus must coincide by foldedness.

To this end, assume that $e$ has label $s_k$ with $k{\leq}i$, and ${\iota}(r) = r_j(e)$, necessarily with $1{\leq}j{\leq}k$. Then, we prove that $k-j \geq i-1$, which is equivalent, by using the constraints $k{\leq}i$ and $1{\leq}j{\leq}k$, to $k=i$ and $j=1$. 

Let us assume, contrary to the above, that $k-j<i-1$. Following the path labelled $(w_{2}w_{1})^{k-j+1}$ and starting at $r = r_{1}(s_i)$ in $\cE_i$, we reach the vertex $r_{k-j+2}(s_i)$ in $\cE_i$, which is a root. We claim that $r_j(e)(w_{2}w_{1})^{k-j+1}$ is \emph{not} a root, which thus contradicts ${\iota}$ being an embedding, since $r_j(e)(w_{2}w_{1})^{k-j+1} = {\iota}(r_{k-j+2}(s_i))$ is the image of a root.
                
First, let us consider the vertex $r_j(e)(w_{2}w_{1})^{k-j} = r_k(e)$. This is the last root of $\cE_e$, so that $r_k(e)w_{2}$ is in a vertex-graph: in fact, $r_k(e)w_{2}=h^-(e)$.  Now, if $\widehat{r} := r_j(e)(w_{2}w_{1})^{k-j+1} = r_k(e)(w_{2}w_{1})$ were a root somewhere in $\cC^*_{(i)}$, then $r_k(e)w_{2} = \widehat{r}w_{1}^{-1}$ would either be in the interior of an edge-graph, or of the form $h^+(e')$ for some edge $e'$. Indeed, if $\widehat{r}$ were the first root $r_1(e')$ of some $\cE_{e'}$, then $r_1(e')w_{1}^{-1}  = h^+(e')$ would hold. If it were a subsequent root, then $\widehat{r}w_{1}^{-1}$ would lie in the interior of $\cE_{e'}$. In neither case can it be equal to $r_k(e)w_{2} = h^-(e)$, and the desired contradiction is reached.
\end{description}

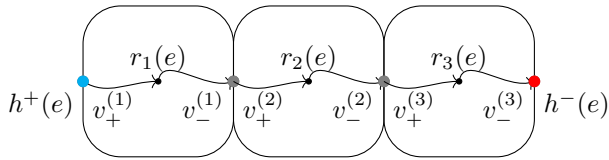
\begin{figure}[ht]
\centering
        \begin{tikzpicture}
            \foreach \i in {1,...,3}
            {
                \pgfmathtruncatemacro{\left}{2*\i};
                \pgfmathtruncatemacro{\cent}{(2*\i)+1};
                \pgfmathtruncatemacro{\right}{(2*\i)+2};
                \draw[rounded corners=0.5cm] (\left,0) rectangle (\right,2);
                \draw[fill] (\cent,1) circle [radius=1pt];
                \node[above] at (\cent,1) {$r_\i(e)$};
                \draw[fill, gray] (\left,1) circle [radius=2pt];
                \node[below right] at (\left,1) {$v_+^{(\i)}$};
                \draw[fill, gray] (\right,1) circle [radius=2pt];
                \node[below left] at (\right,1) {$v_-^{(\i)}$};
                \draw[->] (\left,1) to [out=-32,in=180] (\cent,1);
                \draw[->] (\cent,1) to [out=82,in=200] (\right,1);
            }
            \draw[fill, cyan] (2,1) circle [radius=2pt];
            \draw[fill, red] (8,1) circle [radius=2pt];
		    \node[below left] at (2,1) {$h^+(e)$};        
            \node[below right] at (8,1) {$h^-(e)$};  
        \end{tikzpicture}
\caption{Edge-links assembled into an edge-graph, their respective roots and boundary vertices.}\label{fig:edge-graph-embedding}
\end{figure}

\subsubsection{Free telescopicity of $\ZZ_p*\ZZ_q$ and $\ZZ_{2}*\ZZ_{2}*\ZZ_{2}$}\label{sssection:proof-of-telescopicity}

\begin{proof}[Proof of Propositions~\ref{prop:ZpZq-telescopic} and~\ref{prop:Z2Z2Z2-telescopic}] Given $T$ either $\ZZ_p*\ZZ_q$ ($p \geq q$, $p\geq 3$, $q\geq 2$) or $\ZZ_2*\ZZ_2*\ZZ_2$ with its natural generating set $X$ (one generator for each cyclic factor), and ${\Gamma}$ a finite group with generators $S$, let us substitute the edges and vertices of $\cC := \Cay({\Gamma},S)$ by the vertex- and edge-graphs as described in Sections \ref{sssection:edge-and-vertex-graphs} -- \ref{sssection:substitution-works}. 

By Proposition~\ref{prop:splitting-gluing}, the resulting graph $\cC^*$ is a non-degenerate $(T,X)$-graph with automorphism group
\[
\aut \cC^* {\cong} \aut \cC {\cong} {\Gamma},
\]
as verified in Section~\ref{sssection:substitution-works}. 
        
Let $H$ be a finite-index free subgroup of $T$ with $\Sch_{T,X}(H) {\cong} \cC^*$. Then, by Lemma~\ref{lemma:N-over-H-as-auto},
\[
N_T(H)/H \cong \Gamma.
\]
Since the procedure above can be performed for arbitrary ${\Gamma}$, both $\ZZ_p*\ZZ_q$ and $\ZZ_2*\ZZ_2*\ZZ_2$ are freely telescopic.
\end{proof}

\subsection{Adding factors}\label{section:addingfactors}

The following lemma allows us to pull back telescopicity by surjections. The only property not guaranteed in this case is freeness.

\begin{lemm}\label{lemma:surjection-telescopic}
Let $ f : A \longrightarrow B$ be an epimorphism. Then, if $B$ is telescopic, $A$ is also telescopic, although not necessarily freely.
\end{lemm}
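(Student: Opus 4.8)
The plan is to prove telescopicity of $A$ directly from Definition~\ref{defn:telescopic} by pulling back, along $f$, the subgroup of $B$ that witnesses a given finite group. So fix an arbitrary finite group $\Gamma$. Since $B$ is telescopic, there is a finite-index subgroup $K \leq B$ with $N_B(K)/K \cong \Gamma$. The natural candidate to witness $\Gamma$ inside $A$ is the full preimage $H := f^{-1}(K)$, which is a subgroup of $A$ containing $\ker f$ (as $K$ contains the identity of $B$). The containment $\ker f \subseteq H$ is the crucial feature that makes everything below work.

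First I would check that $H$ has finite index in $A$. Because $f$ is surjective, the assignment $aH \mapsto f(a)K$ is a well-defined bijection between the cosets of $H$ in $A$ and those of $K$ in $B$: indeed $aH = a'H$ iff $f(a)^{-1}f(a') = f(a^{-1}a') \in K$, using $H = f^{-1}(K)$. Hence $[A:H] = [B:K] < \infty$. Next, and this is the heart of the argument, I would establish $N_A(H) = f^{-1}(N_B(K))$. For the inclusion $\subseteq$, if $aHa^{-1} = H$, then applying $f$ and using surjectivity to get $f(H) = f(f^{-1}(K)) = K$ yields $f(a)Kf(a)^{-1} = K$, so $f(a) \in N_B(K)$. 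For $\supseteq$, if $f(a) \in N_B(K)$, then for any $h \in H$ we have $f(aha^{-1}) = f(a)f(h)f(a)^{-1} \in K$, so $aha^{-1} \in H$; applying the same reasoning to $a^{-1}$ (whose image also normalises $K$) gives the reverse inclusion, whence $aHa^{-1} = H$.

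With this in hand, the conclusion follows from the first isomorphism theorem applied to the composite homomorphism $N_A(H) \xrightarrow{f} N_B(K) \twoheadrightarrow N_B(K)/K$. This composite is surjective (as $f$ restricts to a surjection $f^{-1}(N_B(K)) \to N_B(K)$), and its kernel is $\{a \in N_A(H) : f(a) \in K\} = N_A(H) \cap f^{-1}(K) = N_A(H) \cap H = H$, using $H \subseteq N_A(H)$. Therefore $N_A(H)/H \cong N_B(K)/K \cong \Gamma$, and since $\Gamma$ was arbitrary, $A$ is telescopic. I do not expect any serious obstacle beyond the normaliser identification, which is routine once one exploits $\ker f \subseteq H$; the statement deliberately drops the freeness conclusion precisely because $H = f^{-1}(K)$ may contain torsion (for instance arising from $\ker f$), and nothing in the construction controls this.
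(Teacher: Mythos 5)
Your proof is correct and takes essentially the same route as the paper: pull back the witnessing subgroup to $H = f^{-1}(K)$, identify $N_A(H) = f^{-1}(N_B(K))$, and conclude via the first isomorphism theorem applied to the composite $N_A(H)\to N_B(K)/K$. If anything, your normaliser verification is slightly more careful than the paper's (which loosely ``applies $f^{-1}$ to both sides''), but the argument is the same.
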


\begin{proof}
Let $\Gamma$ be a finite group. Since $B$ is telescopic, there exists a finite index subgroup $H \leq B$ such that $\sfrac{N_B (H)}{H} \cong \Gamma$. Consider $ \widetilde{H}:= f^{-1}(H) \leq A$, the preimage of $H$ in $A$. Clearly, $\widetilde{H}$ is a finite index subgroup of $A$ since it is a preimage of a finite index subgroup under an epimorphism.

We have that $ N_A(\widetilde{H})=f^{-1}(N_B(H))$. Indeed, let $x$ be an element of $f^{-1}(N_B(H))$. By definition, this means that $f(x) \in N_B (H)$, and thus $f(x) H = H f(x)$. By applying $f^{-1}$ to both sides, we obtain $x \widetilde{H} = \widetilde{H} x$. The latter yields that $x$ is an element of $N_A(\widetilde{H})$. The reverse inclusion is analogous. 

Thus, $\sfrac{N_A (\widetilde{H})}{\widetilde{H}}$ equals $ \sfrac{f^{-1}(N_B(H))}{f^{-1}(H)}$. It remains to check that the quotient group $ \sfrac{f^{-1}(N_B(H))}{f^{-1}(H)}$ is isomorphic to $\sfrac{N_B(H)}{H} \cong \Gamma$. Let us consider the map $\widetilde{f} : f^{-1}(N_{B}(H)) \longrightarrow \sfrac{N_B(H)}{H}$ defined by $\widetilde{f}(x) = f(x) \cdot H$. Since $\widetilde{f}$ is a surjective homomorphism with kernel $\ker(\widetilde{f})=f^{-1}(H)$, the desired result follows from the first isomorphism theorem.
\end{proof}

The following lemmas are key to making an inductive step and proving the main result stated as Theorem~\ref{thm:free-products-are-telescopic}.

\begin{lemm}\label{lemma:telescopic-lcm}
Assume that a product of the form $T*\ZZ_{m}$, $m \geq 2$, is freely telescopic. Then $T*\ZZ_a*\ZZ_b$ is also freely telescopic, for all $a$, $b\geq 2$ such that $lcm(a, b) = m$.
\end{lemm}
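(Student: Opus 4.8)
The plan is to realise $T*\ZZ_a*\ZZ_b$ as a free product of cyclic groups that surjects onto $T*\ZZ_m$, run the mechanism of Lemma~\ref{lemma:surjection-telescopic}, and then upgrade the conclusion from ``telescopic'' to ``freely telescopic'' by analysing the kernel of the surjection. Write $\ZZ_a=\langle\alpha\rangle$, $\ZZ_b=\langle\beta\rangle$ and $\ZZ_m=\langle\sigma\rangle$, and define a homomorphism $\phi\colon T*\ZZ_a*\ZZ_b\to T*\ZZ_m$ by $\phi|_T=\mathrm{id}_T$, $\phi(\alpha)=\sigma^{m/a}$ and $\phi(\beta)=\sigma^{m/b}$. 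This is well defined because $\sigma^{m/a}$ has order exactly $a$ and $\sigma^{m/b}$ has order exactly $b$. To see that $\phi$ is onto, set $g=\gcd(a,b)$; from $ab=\lcm(a,b)\cdot\gcd(a,b)=mg$ one gets $m/a=b/g$ and $m/b=a/g$, whence $\gcd(m/a,m/b)=\gcd(a,b)/g=1$. Therefore $\langle\sigma^{m/a},\sigma^{m/b}\rangle=\langle\sigma\rangle$, so the image of $\phi$ contains all of $T$ and all of $\ZZ_m$, and $\phi$ is an epimorphism.

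Next I would replay the argument of Lemma~\ref{lemma:surjection-telescopic}. Given a finite group $\Gamma$, free telescopicity of $T*\ZZ_m$ supplies a \emph{free} finite-index subgroup $H\leq T*\ZZ_m$ with $N_{T*\ZZ_m}(H)/H\cong\Gamma$. Put $A:=T*\ZZ_a*\ZZ_b$ and $\widetilde H:=\phi^{-1}(H)$. Since $\phi$ is surjective, $\widetilde H$ has finite index (equal to that of $H$), and exactly as in Lemma~\ref{lemma:surjection-telescopic} one checks $N_A(\widetilde H)=\phi^{-1}(N_{T*\ZZ_m}(H))$ and $N_A(\widetilde H)/\widetilde H\cong N_{T*\ZZ_m}(H)/H\cong\Gamma$.

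The crux, and the one thing not already handed to us by Lemma~\ref{lemma:surjection-telescopic}, is that this particular $\widetilde H$ is \emph{free}, equivalently torsion-free. I would first argue that $\ker\phi$ is torsion-free: as $A$ is a free product of cyclic groups, every finite-order element of $A$ is conjugate into one of the cyclic free factors, and $\phi$ is injective on each factor (it is the identity on the factors coming from $T$, while $\phi(\alpha^k)=\sigma^{km/a}=1$ forces $a\mid k$, i.e. $\alpha^k=1$, and similarly for $\beta$). Hence $\phi$ sends no nontrivial torsion element to the identity, so $\ker\phi$ contains no nontrivial torsion. Now take any finite-order $x\in\widetilde H$; then $\phi(x)\in H$ has finite order, and since $H$ is free (hence torsion-free) we get $\phi(x)=1$, so $x\in\ker\phi$, and by the previous step $x=1$. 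Thus $\widetilde H$ is torsion-free, and being a subgroup of a free product of cyclic groups it is free by the torsion-free $\Leftrightarrow$ free equivalence recorded in Lemma~\ref{lemma:bijection-subgroup-graph}. As $\Gamma$ was arbitrary, $T*\ZZ_a*\ZZ_b$ is freely telescopic. The main obstacle, and the reason Lemma~\ref{lemma:surjection-telescopic} cannot be quoted verbatim, is precisely this last freeness bookkeeping: one must choose the surjection $\phi$ so that its kernel is torsion-free and then exploit the freeness of $H$ to keep $\widetilde H$ torsion-free.
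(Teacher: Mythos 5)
Your proposal is correct and follows essentially the same route as the paper: the same surjection $\phi$ (identity on $T$, $\alpha\mapsto\sigma^{m/a}$, $\beta\mapsto\sigma^{m/b}$, with surjectivity coming from $\gcd(m/a,m/b)=1$), followed by the pullback mechanism of Lemma~\ref{lemma:surjection-telescopic}. The only divergence is the freeness check: the paper applies Kurosh's theorem to $\phi^{-1}(H)$ and pushes a putative non-free free factor forward into the free group $H$, whereas you show $\ker\phi$ is torsion-free and combine this with torsion-freeness of $H$ and the torsion-free/free equivalence of Lemma~\ref{lemma:bijection-subgroup-graph}; both are valid, yours relying (as you note) on the section's standing assumption that $T$ is itself a free product of cyclic groups.
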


\begin{proof}
Let $c := m/a$, $d := m/b$ and consider the following morphisms:
\begin{alignat*}{10}
\iota_a: \ZZ_a &\rightarrow \ZZ_m,\\
[k]_a &\mapsto \left[ ck \right]_m
\end{alignat*}
and        
\begin{alignat*}{10}
\iota_b: \ZZ_b &\rightarrow \ZZ_m\\
[k]_b &\mapsto \left[ dk \right]_m.
\end{alignat*}

Since $\iota_a([1]_a) = [d]_m$ and $\iota_b([1]_b) = [c]_m$, and $c$ and $d$ are coprime, any element of $\ZZ_m$ can be written as a sum of elements in the images of $\iota_a$ and $\iota_b$ by B\'ezout's theorem. Let us consider the surjective morphism $\phi: T*\ZZ_a*\ZZ_b {\to} T*\ZZ_m$ induced by the maps:
$$
\iota_a: \ZZ_a \rightarrow \ZZ_{m}, \quad \iota_b: \ZZ_b \rightarrow \ZZ_{m}, \quad \mathrm{id}: T \rightarrow T.
$$
Indeed, ${\iota}_a,{\iota}_b$ and $\mathrm{id}$ all extend to $T*\ZZ_m$, while the universal property of the free product yields ${\phi}$.

Since the generator $[1]_m$ of $\ZZ_m$ and all of $T$ are in the image of ${\phi}$, the latter is surjective. Also, the restrictions of ${\phi}$ to each of the subgroups $T$, $\ZZ_a$, and $\ZZ_b$ are injective since they correspond exactly to the post-compositions of $\mathrm{id}$, ${\iota}_a$, and ${\iota}_b$, respectively, with their inclusions in $T*\ZZ_m$.

Now, fix a finite group ${\Gamma}$ to be realised as a quotient ``normaliser/subgroup'' of $T*\ZZ_a*\ZZ_b$. We know, by the hypothesis, that there exists a free subgroup $H{\leq}T*\ZZ_m$, of finite index, such that $N(H)/H {\cong} {\Gamma}$. Since ${\phi}$ is surjective, ${\phi}^{-1}(H)$ is the desired subgroup by Lemma~\ref{lemma:surjection-telescopic}, once we verify that it is free. To this end, assume that $\phi^{-1}(H)$ is not. Then, by Kurosh's theorem, $\phi^{-1}(H)$ contains a conjugate of a non-free subgroup of $T$, $\ZZ_a$ or $\ZZ_b$. In the first case, there exists $R \leq T$ non-free such that $\phi^{-1}(H)$ contains $wRw^{-1}$, for some $w$, as a free factor. Then
$$
H \geq \phi(wRw^{-1}) = \phi(w)\phi(R)\phi(w^{-1}) = \phi(w)R\phi(w^{-1}),
$$
where the third equality stems from the fact that $\phi$ restricts to ``the identity + inclusion'' on $T$, by definition.  This implies that $H$ contains a non-free subgroup, which is a contradiction. 
    
Similarly, assume that ${\phi}^{-1}(H)$ contains a free factor of the form $wCw^{-1}$, with $C$ a non-free subgroup of $\ZZ_a$.
Then we have
$$
H \geq \phi(wCw^{-1}) = \phi(w)\phi(C)\phi(w^{-1}) {\cong} \phi(w){\iota}_a(C)\phi(w^{-1}),
$$
where, once again, the third equality stems from the fact that ${\phi}$, when restricted to $\ZZ_a$, is just ${\iota}_a$, plus the inclusion of $\ZZ_m$ in $T*\ZZ_m$. Since ${\iota}_a$ is injective, and $C$ is not free, then ${\iota}_a(C)$ is not free either, so that $H$ contains a non-free subgroup, which is again a contradiction. An analogous reasoning applies if we assume that $C$ is a non-free subgroup of $\ZZ_b$, and the lemma follows.
\end{proof}

\begin{lemm}\label{lemma:telescopic-Z}
Assume that a product of the form $T*\ZZ_m$, $m\geq 2$, is freely telescopic. Then $T*\ZZ$ is freely telescopic.
\end{lemm}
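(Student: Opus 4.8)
The plan is to mimic the proof of Lemma~\ref{lemma:telescopic-lcm}, exploiting the natural quotient $\ZZ \to \ZZ_m$. Let $q : \ZZ \to \ZZ_m$ be the canonical surjection sending a chosen generator of $\ZZ$ to the generator of $\ZZ_m$. Applying the universal property of the free product to $\mathrm{id} : T \to T*\ZZ_m$ and to the composite $\ZZ \xrightarrow{q} \ZZ_m \hookrightarrow T*\ZZ_m$ yields a homomorphism
\[
\phi : T*\ZZ \longrightarrow T*\ZZ_m,
\]
which is surjective, since its image contains all of $T$ together with the generator of $\ZZ_m$. Given a finite group $\Gamma$, the free telescopicity of $T*\ZZ_m$ provides a finite-index \emph{free} subgroup $H \leq T*\ZZ_m$ with $N(H)/H \cong \Gamma$. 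By Lemma~\ref{lemma:surjection-telescopic}, the preimage $\phi^{-1}(H)$ is then a finite-index subgroup of $T*\ZZ$ with $N(\phi^{-1}(H))/\phi^{-1}(H) \cong \Gamma$. It therefore remains only to check that $\phi^{-1}(H)$ is free.

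Next I would verify freeness using Kurosh's Subgroup Theorem. The free factors of $T*\ZZ$ are the cyclic factors $\ZZ_{p_1}, \dots, \ZZ_{p_n}$ of $T$ together with the new factor $\ZZ$. By Kurosh, $\phi^{-1}(H)$ decomposes as a free group freely multiplied with conjugates of subgroups of these factors. The key observation is that every subgroup of the $\ZZ$ factor is itself free (being trivial or infinite cyclic), so the adjoined factor never obstructs freeness; this is precisely what makes the present step lighter than in Lemma~\ref{lemma:telescopic-lcm}. Hence the only possible source of non-freeness is a conjugate $wRw^{-1}$ occurring as a free factor of $\phi^{-1}(H)$, where $R \leq \ZZ_{p_k}$ is a non-trivial (hence non-free) subgroup of one of the finite cyclic factors of $T$.

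To rule this out, suppose such a conjugate $wRw^{-1}$ appears. Since $\phi$ restricts to the identity on $T$, we obtain
\[
H \geq \phi(wRw^{-1}) = \phi(w)\,R\,\phi(w^{-1}),
\]
so that $H$ would contain a non-free subgroup, contradicting the freeness of $H$. Therefore no such $R$ exists, every Kurosh factor is free, and consequently $\phi^{-1}(H)$ is free. As $\Gamma$ was arbitrary, $T*\ZZ$ is freely telescopic.

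I expect the main (though modest) obstacle to be the freeness verification, just as in Lemma~\ref{lemma:telescopic-lcm}. Here, however, it is considerably lighter: the subgroups of the adjoined factor $\ZZ$ are automatically free, so the entire argument reduces to controlling the finite cyclic factors of $T$, which $\phi$ maps isomorphically by construction.
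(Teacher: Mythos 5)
Your proposal is correct and follows essentially the same route as the paper: the paper's proof is a one-line reference to the argument of Lemma~\ref{lemma:telescopic-lcm}, constructing the surjection $\phi\colon T*\ZZ \to T*\ZZ_m$ from $\mathrm{id}_T$ and the quotient $\ZZ\to\ZZ_m$, applying Lemma~\ref{lemma:surjection-telescopic}, and noting that injectivity on the $\ZZ$-factor is unnecessary since every subgroup of $\ZZ$ is free. You simply spell out the Kurosh freeness check that the paper leaves implicit.
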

\begin{proof}
As above, consider the morphisms $\mathrm{id}: T{\to}T$ and $q: \ZZ{\to}\ZZ_m$, together with the induced surjective ``composite'' morphism
\[
{\phi}: T*\ZZ {\to} T*\ZZ_m. 
\]
In this case, the injectivity of ${\phi}$ on the $\ZZ$-factor is not required: we already know that any subgroup of $\ZZ$ is free.
\end{proof}

\begin{thm}\label{thm:free-products-are-telescopic}
Any free product of at least two non-trivial cyclic groups is freely telescopic, except for $\ZZ_2*\ZZ_2$.
\end{thm}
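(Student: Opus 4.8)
The plan is to prove the theorem by reducing an arbitrary free product $T = \ZZ_{p_1}\ast\dots\ast\ZZ_{p_n}$ (with $n\geq 2$ nontrivial factors and $T\not\cong\ZZ_2\ast\ZZ_2$) to the base cases of Propositions~\ref{prop:ZpZq-telescopic} and~\ref{prop:Z2Z2Z2-telescopic}, using the two reduction Lemmas~\ref{lemma:telescopic-lcm} and~\ref{lemma:telescopic-Z}. Throughout I use that free telescopicity is an isomorphism invariant and that the factors of a free product may be permuted, so I can always move whichever factor I like into the last position.

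First I would dispose of the factors isomorphic to $\ZZ$. If $T$ has $k\geq 1$ infinite factors, let $\widehat{T}$ be obtained from $T$ by replacing each $\ZZ$ by $\ZZ_3$; then $\widehat{T}$ is an all-finite free product of $n\geq 2$ factors which, crucially, contains at least one factor of order $3$ and so is never $\ZZ_2\ast\ZZ_2$. I would then climb back from $\widehat{T}$ to $T$ one infinite factor at a time: writing any intermediate group as $H\ast\ZZ_3$ (moving a converted factor to the end), Lemma~\ref{lemma:telescopic-Z} with $m=3$ turns free telescopicity of $H\ast\ZZ_3$ into that of $H\ast\ZZ$. After $k$ applications this deduces free telescopicity of $T$ from that of $\widehat{T}$. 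Hence it suffices to treat all-finite products.

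For the all-finite case I would induct on the number of factors $n$. The base $n=2$ is exactly Proposition~\ref{prop:ZpZq-telescopic}: since $T\neq\ZZ_2\ast\ZZ_2$, after reordering one factor has order $\geq 3$, so $T=\ZZ_p\ast\ZZ_q$ with $p\geq 3$, $q\geq 2$. If $n=3$ and all three factors have order $2$, then $T=\ZZ_2\ast\ZZ_2\ast\ZZ_2$ is handled by Proposition~\ref{prop:Z2Z2Z2-telescopic}. In every remaining case I would merge two factors: writing $T=R\ast\ZZ_a\ast\ZZ_b$ (with $R$ the remaining factors) and $m:=\lcm(a,b)$, Lemma~\ref{lemma:telescopic-lcm} reduces free telescopicity of $T$ to that of $R\ast\ZZ_m$, a product with one fewer factor. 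It then remains only to verify that $R\ast\ZZ_m$ still lies within the inductive hypothesis: for $n\geq 4$ it has $\geq 3$ factors, while for $n=3$ with not all factors of order $2$ one checks that $R\ast\ZZ_m=\ZZ_{p_\ell}\ast\ZZ_{\lcm(p_i,p_j)}$ equals $\ZZ_2\ast\ZZ_2$ only if all three original factors were $2$, which is excluded. This closes the induction.

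The only genuine obstacle is bookkeeping the forbidden group $\ZZ_2\ast\ZZ_2$: the merging Lemma~\ref{lemma:telescopic-lcm} collapses $\ZZ_2\ast\ZZ_2\ast\ZZ_2$ to $\ZZ_2\ast\ZZ_2$, which is \emph{not} telescopic, so that case cannot be reduced and must be supplied directly by Proposition~\ref{prop:Z2Z2Z2-telescopic}. Likewise, in the infinite-factor reduction I deliberately substitute $\ZZ_3$ rather than $\ZZ_2$ precisely to avoid ever manufacturing $\ZZ_2\ast\ZZ_2$. Checking that these are the only ways the excluded group can arise, and that with the above choices every reduction stays within the hypotheses of the lemmas (at least two nontrivial factors, never $\ZZ_2\ast\ZZ_2$), is the heart of the argument.
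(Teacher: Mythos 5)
Your proof is correct and follows essentially the same route as the paper's: an induction on the number of factors with Propositions~\ref{prop:ZpZq-telescopic} and~\ref{prop:Z2Z2Z2-telescopic} as base cases, merging finite factors via Lemma~\ref{lemma:telescopic-lcm} and recovering infinite factors via Lemma~\ref{lemma:telescopic-Z}, with the same bookkeeping to avoid ever reducing to $\ZZ_2*\ZZ_2$. Your explicit substitution of $\ZZ_3$ for each infinite factor is in fact slightly more careful than the paper's one-line treatment of the $p_i=\infty$ case, which does not specify which finite $\ZZ_m$ to use.
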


\begin{proof}
If $T = \ZZ_2*\ZZ_2$, the infinite dihedral group, then the only Schreier graphs associated with finite-index free subgroups of $T$ are cycles, and $T$ cannot be freely telescopic (obviously, it cannot be telescopic at all).  If $T = \ast^{n}_{i=1} \ZZ_{p_i}$ is a finite free product of non-trivial cyclic groups with $n \geq 2$ and $p_1 \geq 3$ or $n \geq 3$, the proof proceeds by induction on $n$.  

If $n=2$, then Proposition~\ref{prop:ZpZq-telescopic} yields that $T = \ZZ_{p_{1}}*\ZZ_{p_{2}}$ is freely telescopic, assuming that $p_{1}  \geq 3$ and $p_{2} \geq 2$, without loss of generality. If $n=3$, either each of $p_{1},p_{2},p_{3}$ equals $2$, in which case Proposition~\ref{prop:Z2Z2Z2-telescopic} yields free telescopicity of  $\ZZ_{2}*\ZZ_{2}*\ZZ_{2}$, or, without loss of generality, we have $p_{3}\neq2$. In the latter case, $\ZZ_{p_{1}}*\ZZ_{\lcm(p_{2},p_{3})}$ is already freely telescopic, and Lemma~\ref{lemma:telescopic-lcm} shows that  $\ZZ_{p_{1}}*\ZZ_{p_{2}}*\ZZ_{p_{3}}$ is so, as well.
    
The inductive step towards $n \geq 4$ is made by using Lemma~\ref{lemma:telescopic-lcm}, with $p_i$'s being finite. Setting $p_i=\infty$, for any $i$, also yields freely telescopic groups, by Lemma~\ref{lemma:telescopic-Z}. This concludes the proof of the theorem.
\end{proof}

\section{Graph substitution}\label{graph-substitution}
In this section we shall always consider directed labelled graphs. Our goal is to define a reasonable condition  that allows replacing vertices and edges of a Cayley graph by other ``chunks of graphs'' in a way that preserves the automorphism group. All edges having a fixed given label will be replaced by the same ``chunk'' of a suitable graph, for each label, and a similar procedure takes place for all vertices. In what follows, let $\Gamma$ be a finite group generated by a finite set $S$, and let $\cC := \Cay(\Gamma, S)$ be its Cayley graph.

\subsection{Vertex graphs}\label{vertex-graphs-substitution}

If $v$ is a vertex in the Cayley graph $\cC = \Cay(\Gamma, S)$, let $\adj\, v$ be the set of tuples consisting of edges adjacent to $v$ and their orientations relative to $v$, i.e. $\adj\, v = (\St_+ v){\times}\{+\} {\sqcup} (\St_- v){\times}\{-\}$. Let ${\Sigma}:=S{\times}\{+,-\}$, then each set $\adj\, v$ is naturally in bijection with ${\Sigma}$: one can easily identify the corresponding pairs of edge labels and their orientations relative to $v$. Let ${\tau}_v: \adj v {\to} {\Sigma}$ be this bijection, to which we shall refer as \textit{the signature} of $v$.

If $\cV$ is a connected graph and ${\chi}:{\Sigma}{\hookrightarrow}V\cV$ is an injective map, then we call the pair $(\cV,{\chi})$ a \textit{signed graph}. Looking ahead, the map ${\chi}$ will tell us how one should connect the edges adjacent to a vertex $v \in V\cC$ to the vertices of $\cV$ when replacing $v$ in $\cC$ by $\cV$. The injectivity of ${\chi}$ also comes useful later.
         
Let ${\partial}\cV:= \im {\chi}$, which we shall call the boundary of $\cV$. Fix a signed graph $(\cV, \chi)$, and let
\[
\aut(\cV,{\chi}) := \{{\phi} \in \aut \cV\ :\ {\phi}|_{{\partial}\cV} = \mathrm{id}_{{\partial}\cV}\}
\]
be the group of automorphisms of $\cV$ which restrict to the identity map on the boundary ${\partial}\cV$. We shall call $\aut(\cV, {\chi})$ the group of \textit{signed automorphisms} of $(\cV, {\chi})$.
        
Now let us consider the following condition, that will play an important role in Section~\ref{section:main-statement-substitution}. 
\begin{enumerate}
\renewcommand{\labelenumi}{\textbf{S.\theenumi}}
\item \label{S1}
$\aut (\cV, {\chi}) = \{\mathrm{id}\}$. 
\end{enumerate}
In other words, S.\ref{S1} states that any non-trivial automorphism of $\cV$ has to move some vertex of its boundary ${\partial}\cV$.

For a given vertex $v {\in} V\cC$, a signed graph $\cV$ can be inserted in place of $v$ by connecting each edge $e\, {\in}\, \adj\, v$ to the vertex ${\chi}({\tau}_v(e))\,{\in}\,{\partial}\cV$. Broadly speaking, condition S.\ref{S1} forbids any automorphism local to $\cV$ to appear when $v$ is replaced by $\cV$.

A graph $\cV$ described above will be called the \textit{vertex-graph} associated with $v$.

\subsection{Edge graphs}\label{edge-graphs-substitution}

The case of edge substitution is simpler: if $s$ is a label, an \textit{edge-graph} for $s$ is a connected graph $\cE_s$ with distinct distinguished vertices $h^{+}_s$ and $h^{-}_s$. We substitute an edge $e$ labelled $s$ by first removing $e$, and then identifying the origin of $e$ with $h^{+}_s$ and its terminus with $h^{-}_s$.

\subsection{Vertex and edge substitution}\label{sssection:substitution}
        
Below we describe the complete substitution procedure.  Recall that we start by considering the Cayley graph $\cC$ of a group $\Gamma$ with respect to generators $S$, as well as 
\begin{itemize}
\item a connected signed vertex graph $(\cV,{\chi})$, 
\item a connected edge graph $\cE_s$, for each edge label $s$, with two distinct distinguished vertices $h_s^{+}$ and $h_s^{-}$ and trivial automorphism group $\aut \cE_s = \{\mathrm{id}\}$.
\end{itemize}
        
Let $\cC'$ be the result of replacing each vertex $v$ of $\cC$ with an instance of $\cV$ as explained in Section~\ref{vertex-graphs-substitution}. More precisely, if $v$ is a vertex with signature ${\tau}_v:\adj v {\to} {\Sigma}$, we remove $v$, insert a copy of $\cV$ and connect each edge $e{\in}\adj v$ to the vertex ${\chi}({\tau}_v(e))$ of $\cV$.

Now, let $\cC^*$ be the result of replacing each ``old'' edge (i.e. an edge that is not in any of the vertex graphs) labelled $s$ in $\cC'$ by a copy of $\cE_s$, with $h_s^{+}$ identified with the origin of $e$, and $h_s^{-}$ with its terminus, as described in Section~\ref{edge-graphs-substitution}.

Here and below, copies of $\cV$ will be always called ``vertex-graphs'', and copies of $\cE_s$ will be ``edge-graphs''. 

For $v{\in}V\cC$, write $\cV_v$ for the instance of the vertex-graph $\cV$ inserted in place of $v$, and let ${\iota}_v:\cV {\hookrightarrow} \cC^*$ be the corresponding graph embedding. Similarly, for $e{\in}E\cC$ with label $\mu(e)=s$, we shall write $\cE_e$ for the instance of the edge-graph labelled $s$ which is inserted in place of $e$, and ${\iota}_e:\cE_{s}{\hookrightarrow} \cC^*$ will be the corresponding graph embedding.

If $e$ is an edge with origin $u$, resp. terminus $v$, and label $s$, then we identify ${\iota}_e(h_s^{+})$ with $ {\iota}_u({\chi}(s, +))$, resp. ${\iota}_e(h_s^{-})$ with ${\iota}_v({\chi}(s, -))$. The vertex that we obtain after such identification is shared between $\cE_e$ and $\cV_u$, resp. $\cV_v$. We shall call these vertices $h^{+}(e)$ and $h^{-}(e)$, respectively, in order to distinguish them in $\cC^*$.

Observe that our construction implies the following:
\begin{itemize}
\item The vertex-graphs are disjoint from each other, since $h_s^+$ and $h_s^-$ are distinct in any $\cE_s$, and so are the edge-graphs, by injectivity of ${\chi}$.
\item The vertices of ${\partial}\cV_v$ are exactly those of the form $h^{\pm}(e)$ for an edge $e$ adjacent to $v$ in the initial graph $\cC$.
\item $h^{+}(e) {\in} \cV_u$ if and only if $u$ is the origin of $e$, and $h^{-}(e) {\in} \cV_v$ if and only if $v$ is the terminus of $e$.
\item ${\chi}^{-1}{\iota}_u^{-1}(h^+(e)) = ({\mu}(e), +)$, resp. ${\chi}^{-1}{\iota}_u^{-1}(h^-(e)) = ({\mu}(e), -)$, if $u$ is the origin, resp. the terminus, of $e$.\end{itemize}

Finally, consider an ordering $s_{1} < \dots < s_n$ on the edge labels. Let, for each $i=0, \dots, n$,  $\cC^*_{(i)}$ be the subgraph of $\cC^*$ that consists of the vertex-graphs and only the  edge-graphs corresponding to the labels $s_j$ with $j \leq i$. In other words, this is a subgraph obtained by removing the ``interiors'' (i.e. everything but the vertices $h^{\pm}(e)$) of all the edge-graphs corresponding to the labels $s_j$ for $j > i$.

\subsection{Main statement}\label{section:main-statement-substitution}

Below we formulate the main statement regarding our graph substitution procedure. 

\begin{prop} \label{prop:graph-substitution}
With the notation above, if \textbf{S.\ref{S1}} holds, as well as if
\begin{enumerate}[resume]
\renewcommand{\labelenumi}{\textbf{S.\theenumi}}
\item \label{S2}
one can order the labels $s_{1}, \dots, s_n$ in such a way that for any $i\in\{0, \dots, n-1\}$, the graph $\cC^*_{(i)}$ contains no subgraph isomorphic to $\cE_{s_{i}}$ except for the edge-subgraphs $\cE_{e}$, for $e\in E\cC$ with label ${\mu}(e)=s_{i}$,
\end{enumerate}
then
\[
\aut \cC^* {\cong} \aut \cC.
\]
\end{prop}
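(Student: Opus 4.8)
The plan is to exhibit an explicit isomorphism, with the nontrivial content concentrated in surjectivity. First I would construct a homomorphism $\Psi:\aut\cC\to\aut\cC^*$ by \emph{lifting}: given $\alpha\in\aut\cC$, since $\alpha$ preserves labels and incidence, it carries each $\cV_v$ to a vertex-graph of the same type and each $\cE_e$ to an edge-graph with ${\mu}(\alpha(e))={\mu}(e)$, so I can let $\Psi(\alpha)$ act on $\cV_v$ by the canonical identification ${\iota}_{\alpha(v)}{\iota}_v^{-1}$ and on $\cE_e$ by ${\iota}_{\alpha(e)}{\iota}_e^{-1}$. The only point to verify is that these two prescriptions agree on the shared boundary vertices $h^{\pm}(e)$; this follows from the bookkeeping identities listed at the end of Section~\ref{sssection:substitution}, together with the fact that $\alpha$ sends the origin (resp. terminus) of $e$ to that of $\alpha(e)$. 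That $\Psi$ is an injective homomorphism is then immediate, since $\Psi(\alpha)$ determines $\alpha$ through its action $\cV_v\mapsto\cV_{\alpha(v)}$ on the pairwise distinct vertex-graphs.

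The heart of the matter is surjectivity: fix $\psi\in\aut\cC^*$ and show $\psi=\Psi(\alpha)$ for some $\alpha\in\aut\cC$. The key claim, and the main obstacle, is that $\psi$ \emph{respects the tiling of $\cC^*$} into vertex- and edge-graphs, i.e. $\psi$ permutes the $\cV_v$ among themselves and sends each $\cE_e$ to some $\cE_{e'}$ with ${\mu}(e')={\mu}(e)$. I would prove this by downward induction on the label ordering $s_n>\dots>s_1$ supplied by S.\ref{S2}, keeping as inductive hypothesis that $\psi$ restricts to an automorphism of $\cC^*_{(i)}$ (the base case $i=n$ being $\cC^*_{(n)}=\cC^*$). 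At stage $i$, the official edge-graphs $\cE_e$ with ${\mu}(e)=s_i$ are full copies of $\cE_{s_i}$ inside $\cC^*_{(i)}$, and S.\ref{S2} asserts that these are the \emph{only} subgraphs of $\cC^*_{(i)}$ isomorphic to $\cE_{s_i}$; since $\psi|_{\cC^*_{(i)}}$ carries each such copy to another copy, it must permute them. Because $\aut\cE_{s_i}=\{\mathrm{id}\}$, the induced identification $\cE_e\to\cE_{e'}$ is the canonical one, sending interiors to interiors and $h^{\pm}(e)$ to $h^{\pm}(e')$; hence $\psi$ preserves the set of interior cells of the $s_i$-edge-graphs and therefore restricts to an automorphism of $\cC^*_{(i-1)}$, closing the induction. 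At $i=0$ one is left with $\cC^*_{(0)}=\bigsqcup_v \cV_v$, a disjoint union of connected vertex-graphs, whose connected components $\psi$ necessarily permutes; this establishes the claim.

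Granting the claim, I would define $\alpha$ on $V\cC\sqcup E\cC$ by $\psi(\cV_v)=\cV_{\alpha(v)}$ and $\psi(\cE_e)=\cE_{\alpha(e)}$. Label preservation is built in, and incidence is recovered from the shared vertices: $h^{+}(e)\in\cV_u\cap\cE_e$ forces $\psi(h^{+}(e))\in\cV_{\alpha(u)}\cap\cE_{\alpha(e)}$, and since this intersection is the single vertex $h^{+}(\alpha(e))$, the origin of $\alpha(e)$ must be $\alpha(u)$ (and likewise for termini), so $\alpha\in\aut\cC$. Finally I would show $\psi=\Psi(\alpha)$ by considering $\theta:=\Psi(\alpha)^{-1}\psi$, which fixes every $\cV_v$ and every $\cE_e$ setwise. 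On each $\cE_e$ the restriction is forced to be the identity by $\aut\cE_{{\mu}(e)}=\{\mathrm{id}\}$, so $\theta$ fixes every boundary vertex $h^{\pm}(e)$, that is $\theta|_{{\partial}\cV_v}=\mathrm{id}$ for all $v$; condition S.\ref{S1} then yields $\theta|_{\cV_v}=\mathrm{id}$, whence $\theta=\mathrm{id}$ and $\psi=\Psi(\alpha)$.

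The genuinely delicate step is the inductive tiling-rigidity argument of the second paragraph, where the interplay between S.\ref{S2} and the successive restrictions to the subgraphs $\cC^*_{(i)}$ is essential: S.\ref{S2} is exactly the hypothesis that prevents an automorphism from ``cutting across'' the edge-graphs or trading an edge-graph of one label for a differently labelled piece. Everything else is bookkeeping enabled by foldedness and by the triviality of $\aut\cE_s$, which makes all the piecewise identifications canonical.
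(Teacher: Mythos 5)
Your proof is correct and, apart from packaging the isomorphism in the opposite direction (you lift $\aut\cC\to\aut\cC^*$ and prove surjectivity via the tiling-rigidity claim, while the paper builds $\Con:\aut\cC^*\to\aut\cC$ directly and proves it bijective), it is essentially the paper's argument: the same downward induction on the label ordering using \textbf{S.2}, the same use of $\aut\cE_s=\{\mathrm{id}\}$ to canonicalise the edge-graph identifications, and the same appeal to \textbf{S.1} to rigidify the vertex-graphs. A minor point in your favour: in the final rigidity step you invoke \textbf{S.1}, whereas the paper's injectivity argument invokes foldedness of $\cC^*$, which is not actually a hypothesis of the general proposition.
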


Here, condition S.\ref{S2} ensures that under any automorphism of $\cC^*$ the instances of $\cV$, resp. the instances of any $\cE_s$, must be sent to each other. As can be understood from the proof, any other condition ensuring this fact can be used instead of S.\ref{S2}. Then, it is enough to define, given an automorphism of $\cC^*$, a corresponding automorphism of $\cC$ by looking at the correspondence 
$$\{\text{vertex / edge of } \cC\} {\leftrightarrow} \{\text{vertex-graph / edge-graph in } \cC^*\}.$$ 
Condition S.\ref{S1} ensures that not too much liberty is gained by making the aforementioned substitutions.

\begin{proof}[Proof of Proposition~\ref{prop:graph-substitution}] For $v{\in}V\cC$, recall that $\cV_v$ stands for the instance of $\cV$ inserted in place of $v$, and ${\iota}_v:\cV {\hookrightarrow} \cC^*$ is the corresponding graph embedding. For $e{\in}E\cC$,  $\cE_e$ stands for the instance of the edge-graph with label $\mu(e)$, inserted in place of $e$, while ${\iota}_e:\cE_{{\mu}(e)}{\hookrightarrow} \cC^*$ is the corresponding embedding.

\medskip
            
\paragraph{\textbf{Constructing a morphism} $\Con: \aut \cC^* {\to} \aut \cC$} Fix an automorphism $\phi \in \aut \cC^*$. We claim that, for any edge $e{\in}E\cC$, there exists a unique $e'{\in}E\cC$ such that ${\phi}$ restricts to an isomorphism ${\phi}|_{\cE_e}:\cE_e {\to} \cE_{e'}$ and ${\mu}(e) = {\mu}(e')$.  From this follows that ${\phi}:h^{+}(e) \mapsto h^{+}(e')$ and ${\phi}:h^{-}(e) \mapsto h^{-}(e')$, since $\cE_{{\mu}(e)}$ has trivial automorphism group.

Let, without loss of generality, $s_{1} < \dots < s_n$ be the ordering on the edge labels required by condition S.\ref{S2}. We shall verify by reverse induction on $1{\leq}i{\leq}n$ that 

\begin{enumerate}[resume]
\renewcommand{\labelenumi}{\textbf{($\ast$)}}
\item if $e$ has label $\mu(e) = s_i$, then there exists a unique $e'$ such that ${\phi}|_{\cE_e}$ is an isomorphism from $\cE_e$ to $\cE_{e'}$, and $\mu(e')=s_i$ (as noted above, this implies that we have ${\phi}|_{\cE_e}: h^{\pm}(e) \mapsto h^{\pm}(e')$).
\end{enumerate}           

If $i=n$, we know by the hypothesis (since condition S.\ref{S2} is satisfied) that the only subgraphs of $\cC^*=\cC^*_{(n)}$ isomorphic to $\cE_{s_{n}}$ are the graphs $\cE_e$ with $e$ an edge labelled $s_{n}$. Fix such an edge: then ${\phi}|_{\cE_e}$ defines an isomorphism onto its image, which must therefore be of the form $\cE_{e'}$, for some $e'$ of label $s_{n}$. This provides the induction base.
            
Now, fix $i \leq n-1$ and assume that property $(\ast)$ holds for any $i+1{\leq}j{\leq}n$. Since, for all $i+1{\leq}j{\leq}n$, ${\phi}$ sends instances of $\cE_{s_j}$ to instances of $\cE_{s_j}$, and their vertices $h^{\pm}(e)$ to themselves, it restricts to an automorphism of $\cC^*_{(i)}$. Then, by applying S.\ref{S2}, we know that ${\phi}$ must send an instance of $\cE_{s_{i}}$ to another one. Indeed, ${\phi}|_{\cE_{s_{i}}}$ has range in $\cC^*_{(i)}$, while S.\ref{S2} guarantees that its image must then be an instance of $\cE_{s_{i}}$. This proves the induction step and our claim is thus verified.

Now, let us show that for each vertex $v\in V\cC$, there exists a unique $v'\in V\cC$ such that ${\phi}$ restricts to an isomorphism ${\phi}|_{\cV_v}:\cV_v {\to} \cV_{v'}$. Indeed, it follows from the above that ${\phi}$ sends edge-graphs to edge-graphs, and boundary vertices (those of the form $h^{\pm}(e)$) to boundary vertices, hence it restricts to an automorphism of the subgraph of $\cC^*$ consisting only of the vertex-graphs employed in the construction. The latter is just a disjoint union of all vertex-graphs. Since ${\phi}(\cV_v)$ is connected, it must lie in some $\cV_{v'}$, and thus coincide with it.
            
Let $ {\psi} :=\Con({\phi})$ be defined as follows: ${\psi}(v)$ is the unique $v'$ such that ${\phi}$ sends $\cV_v$ to $\cV_{v'}$, and ${\psi}(e)$ is the unique $e'$ such that ${\phi}$ sends $\cE_e$ to $\cE_{e'}$.

Observe that ${\psi}$ is bijective since applying the above construction to ${\phi}^{-1}$ yields an inverse map to ${\psi}$, and it remains to verify that ${\psi}$ is actually a morphism of graphs, i.e. $\psi$ preserves adjacency.

Recall that $h^{+}(e){\in}\cV_u$ if and only if $u$ is the origin of $e$. Fix an edge $e{\in}E\cC$ with origin $u{\in}V\cC$, such that $h^{+}(e){\in}\cV_u$. Let $u':={\psi}(u)$ and $e':={\psi}(e)$. Then
\[
h^{+}(e') = {\phi}(h^{+}(e)) {\in} {\phi}(\cV_u) =  \cV_{u'},
\]
and since $h^{+}(e'){\in}\cV_{u'}$, $u'$ is the origin of $e'$. Replacing $+$ by $-$ and ``origin'' by ``terminus'', we conclude that ${\psi}$ is indeed a morphism of graphs.

\medskip

\paragraph{$\Con$ \textbf{is a homomorphism}} Let ${\phi}, {\phi}'{\in}\aut(\cC^*)$ be two automorphisms, and suppose that ${\phi}$ sends $\cV_v$ to $\cV_{v'}$, while ${\phi}'$ sends $\cV_{v'}$ to $\cV_{v''}$. Then ${\phi}'{\phi}$ sends $\cV_{v}$ to $\cV_{v''}$, and $\Con({\phi}'{\phi})(v) = v'' = \Con({\phi}')(\Con({\phi})(v))$.
An analogous statement holds for edges, and thus $\Con$ is a morphism of groups.

\medskip

\paragraph{\textbf{Injectivity of} $\Con$} Let $\phi$ be an element of $\aut \cC^{*}$. If $e, e' \in E \cC$ are two edges, then $\phi(\cE_e)=\cE_{e'}$ if and only if $\Con (\phi )(e)=e'$. If $\Con(\phi) = \mathrm{id}$, then we have that $\phi(\cE_e)=\cE_{e}$. However, edge graphs have trivial automorphism group, and thus $\phi|_{\cE_e}=\mathrm{id}$. Since the graph $\cC^*$ is folded, we obtain that $\phi$ is the identity on $\cC^*$.

\medskip

\paragraph{\textbf{Surjectivity of} $\Con$} Let ${\psi}{\in}\aut \cC$. Our goal is to find some ${\phi}{\in}\aut\cC^*$ such that $\Con({\phi}) = {\psi}$.
Define
\begin{alignat*}{10}
{\phi}|_{\cE_e} &:= {\iota}_{{\psi}(e)}{\circ}({\iota}_e|_{\cE_e})^{-1}, \qquad &\text{ for every edge } e {\in} E\cC,\\
{\phi}|_{\cV_v} &:= {\iota}_{{\psi}(v)}{\circ}({\iota}_v|_{\cV_v})^{-1}, \qquad &\text{ for every vertex } v {\in} V\cC.
\end{alignat*}
            
By construction, we see already that, assuming ${\phi}$ to be a well-defined automorphism, its image under $\Con$ is ${\psi}$, and it remains to verify that the piecewise definitions above actually agree. The only vertices lying in two different subgraphs (one vertex- and one edge-subgraph) are the boundary vertices. Fix an edge $e$ with terminus $v$ and label $s$, such that ${\iota}_e(h_s^{-}) = h^{-}(e) = {\iota}_v({\chi}(e, -))$. Then
\begin{alignat*}{10}
{\phi}|_{\cE_e}(h^{-}(e)) &= {\iota}_{{\psi}(e)}{\iota}_e^{-1}{\iota}_e(h_s^{-}) = {\iota}_{{\psi}(e)}(h_s^{-}) = h^{-}({\psi}(e)),\\
{\phi}|_{\cV_v}(h^{-}(e)) &= {\iota}_{{\psi}(v)}{\iota}_v^{-1}{\iota}_v({\chi}(e, -)) = {\iota}_{{\psi}(v)}({\chi}(e, -)) = h^{-}({\psi}(e))
\end{alignat*}
and ${\phi}|_{\cE_e}(h^{-}(e)) = {\phi}|_{\cV_v}(h^{-}(e))$, as desired. Replacing $-$ by $+$ and ``terminus'' by ``origin'', we conclude that ${\phi}|_{\cE_e}(h^{+}(e)) = {\phi}|_{\cV_v}(h^{+}(e))$, as well.
            
Finally, observe that ${\phi}$ is an automorphism since the construction above applied to ${\psi}^{-1}$, the inverse of $\psi$, yields an inverse of ${\phi}$.\end{proof}

\section{Adding more factors}

In this section we shall concentrate mostly on properties of telescopic groups, which do not have direct applications to the combinatorial results from Section~\ref{section:symmetries}, and rather stay on the purely group-theoretic side of our study.

\begin{prop}
Let $G_1$ be a freely telescopic group and let $G_2$ be a group having a finite index subgroup $H$ such that $N_{G_2}(H) = H$. Then $G_1*G_2$ is freely telescopic.
\end{prop}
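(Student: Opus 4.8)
The plan is to recast free telescopicity of $G_1*G_2$ in terms of finite permutation actions and then to build the required action as a product of a ``flexible'' $G_1$-action carrying the symmetry $\Gamma$ with a ``rigid'' $G_2$-action. Fix a finite group $\Gamma$. Since $G_1$ is freely telescopic, I would first choose a free finite-index subgroup $H_1\leq G_1$ with $N_{G_1}(H_1)/H_1\cong\Gamma$, and let $H\leq G_2$ be the given finite-index subgroup with $N_{G_2}(H)=H$. Set $A:=G_1/H_1$ and $B:=G_2/H$, and let $G_1$ act on the finite set $F:=A\times B$ through the first coordinate and $G_2$ through the second. These two actions assemble into an action of $G_1*G_2$ on $F$, which is transitive: from $(a_0,b_0)$ one reaches any $(a,b)$ by first applying a suitable element of $G_1$ and then one of $G_2$. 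Let $K:=\mathrm{Stab}_{G_1*G_2}(a_0,b_0)$; then $K$ has finite index $[G_1:H_1]\,[G_2:H]$, the coset set $(G_1*G_2)/K$ is isomorphic to $F$ as a $G_1*G_2$-set, and, exactly as in Proposition~\ref{lemma:N-over-H-as-auto}, $N_{G_1*G_2}(K)/K\cong\aut_{G_1*G_2}(F)$.

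The heart of the argument is to compute $\aut_{G_1*G_2}(F)\cong\Gamma$. The point is that the $G_2$-orbits on $F$ are exactly the fibres $\{a\}\times B$, so collapsing them recovers $A$, and any equivariant permutation $\sigma$ of $F$ descends to a $G_1$-equivariant permutation $\bar\sigma_1$ of $A$; symmetrically, collapsing the $G_1$-orbits $A\times\{b\}$ yields a $G_2$-equivariant permutation $\bar\sigma_2$ of $B$. This produces a homomorphism
\[
\aut_{G_1*G_2}(F)\longrightarrow \aut_{G_1}(A)\times\aut_{G_2}(B).
\]
I would check it is injective, since $\sigma(a,b)$ must lie in $(\{a\}\times B)\cap(A\times\{b\})=\{(a,b)\}$ once both $\bar\sigma_1$ and $\bar\sigma_2$ are trivial, and surjective, since any $\gamma\in\aut_{G_1}(A)$ extends to $\gamma\times\mathrm{id}_B$. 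Now $\aut_{G_1}(A)\cong N_{G_1}(H_1)/H_1\cong\Gamma$ by construction, while the self-normalizing hypothesis gives $\aut_{G_2}(B)\cong N_{G_2}(H)/H=1$. Hence $\aut_{G_1*G_2}(F)\cong\Gamma$, that is, $N_{G_1*G_2}(K)/K\cong\Gamma$. This is precisely where the condition $N_{G_2}(H)=H$ enters: it makes the $G_2$-factor rigid, so that all symmetries of $F$ originate on the $G_1$-side.

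It remains to ensure that $K$ is \emph{free}, which I expect to be the main obstacle. By Kurosh's Subgroup Theorem, $K$ is free if and only if each intersection $K\cap wG_iw^{-1}$ is free; and, as for any point stabilizer of a product action, $K\cap wG_1w^{-1}$ is conjugate to the $G_1$-stabilizer of a point of $F$, namely a conjugate of $H_1$, while $K\cap wG_2w^{-1}$ is conjugate to a $G_2$-stabilizer, namely a conjugate of $H$. The $G_1$-side is free because $H_1$ was chosen free; for the $G_2$-side one takes the self-normalizing subgroup $H$ to be free as well (which costs nothing when $G_2$ is, for instance, a free group, where every finite-index subgroup is automatically free). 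Granting this, all Kurosh factors of $K$ are free, so $K$ is a free finite-index subgroup of $G_1*G_2$ with $N_{G_1*G_2}(K)/K\cong\Gamma$. Since $\Gamma$ was arbitrary, $G_1*G_2$ is freely telescopic. The delicate point to get right is thus the interplay of the two hypotheses: self-normality of $H$ secures the \emph{rigidity} that pins $\aut_{G_1*G_2}(F)$ down to $\Gamma$, whereas freeness of $H$ secures the \emph{freeness} of $K$ through Kurosh.
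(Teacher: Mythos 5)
Correct, and essentially the paper's own argument in different packaging: the stabilizer $K$ of your product action on $(G_1/H_1)\times(G_2/H)$ is exactly the preimage $\phi^{-1}(H_1\times H)$ under the natural surjection $\phi:G_1*G_2\to G_1\times G_2$ that the paper uses, your decomposition $\aut_{G_1*G_2}(F)\cong\aut_{G_1}(A)\times\aut_{G_2}(B)$ is its computation $N_{G_1\times G_2}(H_1\times H)/(H_1\times H)\cong N_{G_1}(H_1)/H_1\times N_{G_2}(H)/H$, and the Kurosh freeness argument is the same. The extra requirement you flag --- that the self-normalizing subgroup $H$ be free --- is genuinely needed (a Kurosh factor of $K$ is a conjugate of $H$) and is also silently inserted in the paper's proof, which writes ``finite-index \emph{free} subgroup $H_2<G_2$'' even though the proposition's hypothesis omits freeness.
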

    
Note that, in particular, the free product of two freely telescopic groups is freely telescopic. The proof follows the same kind of argument as Lemmas~\ref{lemma:telescopic-lcm} -- \ref{lemma:telescopic-Z}.
    
\begin{proof} Let $\Gamma$ be a finite group. Since $G_1$ is freely telescopic, there exists a finite-index free subgroup $H_1$ satisfying $\sfrac{N_{G_1}(H_1)}{H_1} \cong \Gamma$. By assumption, there exists a finite-index free subgroup $H_2 < G_2$ such that $\sfrac{N_{G_2}(H_2)}{H_2}$ is trivial.

Consider the inclusions morphisms ${\phi}_i:G_i {\to} G_1{\times}G_2$, for $i = 1, 2$, and the morphism ${\phi}:G_{1}*G_{2}{\to} G_{1}{\times}G_{2}$ induced by the universal mapping property of the free product. The latter is easily seen to be surjective. 

Observe that $H_1 \times H_2$ is a finite index subgroup of $G_1 \times G_2$, and that the normaliser $N_{G_{1}{\times}G_{2}}(H_{1}{\times}H_{2})/H_{1}{\times}H_{2}$ is isomorphic to $N_{G_{1}}(H_{1})/H_{1} {\times} N_{G_{2}}(H_{2})/H_{2}$, while the latter is simply $N_{G_{1}}(H_{1})/H_{1}$. Then, let us consider the preimage $\widetilde{H}$ of $H_1 \times H_2$ under the morphism $\phi$ defined above. Since $\phi$ is an epimorphism, by Lemma~\ref{lemma:surjection-telescopic}, we have immediately that $G_1 \ast G_2$ is telescopic. It remains to show that $\widetilde{H}= \phi ^{-1} ( H_1 \times H_2)$ is a free subgroup of $G_1 \ast G_2$.

By Kurosh's Subgroup Theorem, $\widetilde{H}$ can be written as $ \widetilde{H} = F(X) * (*_i u_iK_{1,i}u_i^{-1}) * (*_j w_jK_{2,j}w_j^{-1})$ for a number of subgroups $K_{1,i}$ of $G_1$, $K_{2,j}$ of $G_2$, a subset $X$ of $G_1 \ast G_2$ and words $u_i , w_j$ in $G_1 \ast G_2$. If $\widetilde{H}$ were not free, then some of the $K_{1,i} , K_{2,j}$ would not be free either. Assume, without loss of generality, that $K=K_{1,i}$ is not free. Then, by construction, $u_1 K u_1^{-1}$ is a non-free subgroup of $\widetilde{H}$ and thus
\[
\phi(u_1 K u_1^{-1}) = {\phi}(u_{1}){\phi}(K){\phi}(u_{1})^{-1} = {\phi}(u_{1}){\phi}_{1}(K){\phi}(u_{1})^{-1}
\]
is a non-free subgroup of $H_1$, by using the injectivity of ${\phi}_{1}$. Therefore $H_{1}$ cannot be free, and the proposition follows.
\end{proof}

\section{An asymptotic estimate}\label{asymptotic-estimate}

\begin{thm}\label{thm:asympt}
Let $T$ be a finite free product of cyclic groups, different from $\ZZ_2*\ZZ_2$. Then for any finite group ${\Gamma}$, there exist constants $A > 1$, $B > 0$ and $M\in\NN$ such that for all $d{\geq}M$ the set $F(T, {\Gamma}, d) = \{\text{free subgroups } H < T \text{ of index } \leq d \text{ with } N_T(H)/H{\cong}{\Gamma}, \text{ up to conjugacy}\}$ has cardinality ${\geq}A^{B d\log d}$.
\end{thm}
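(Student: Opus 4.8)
The plan is to recycle the realizing construction behind Propositions~\ref{prop:ZpZq-telescopic}--\ref{prop:Z2Z2Z2-telescopic} and enrich it with a large, freely varying ``decoration'' that leaves the symmetry count untouched while changing the underlying Schreier graph. By Lemma~\ref{lemma:bijection-subgroup-graph} (forgetting basepoints), conjugacy classes of finite-index subgroups of $T$ correspond bijectively to isomorphism classes of \emph{unbased} non-degenerate $(T,X)$-Schreier graphs, and Lemma~\ref{lemma:N-over-H-as-auto} identifies $N_T(H)/H$ with $\aut$ of the associated graph. Hence it suffices to exhibit at least $A^{Bd\log d}$ pairwise non-isomorphic non-degenerate $(T,X)$-graphs on at most $d$ vertices whose automorphism group is isomorphic to $\Gamma$; each then yields a distinct free subgroup $H<T$ of index $\leq d$ with $N_T(H)/H\cong\Gamma$.

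First I would start from $\cC=\Cay(\Gamma,S)$ and the substituted graph $\cC^*$ of Section~\ref{sssection:substitution}, in which every vertex is replaced by the vertex-graph $\cV$ and every edge by an edge-graph. Fix an internal (non-boundary) vertex $v_0$ of $\cV$, and for a non-degenerate $(T,X)$-graph $\mathcal{D}$ (the \emph{gadget}) carrying a single pair of complementary dangling vertices, form $\cV':=\cV$ with $\mathcal{D}$ attached at $v_0$ by vertex splitting and gluing as in Proposition~\ref{prop:splitting-gluing}; this is done identically in every copy $\cV_v$, so the $\Gamma$-action on $\cC$ still lifts. Since $\cV'$ is connected, folded, and has non-empty boundary, condition S.\ref{S1} holds automatically (the same foldedness argument used in Section~\ref{sssection:substitution-works}), and provided $\mathcal{D}$ contains no vertex satisfying the root condition $P$, no new $P$-vertices arise from the gluing (cf.\ G.\ref{ppty:G2}), so the edge-graph recognition underlying S.\ref{S2} is unaffected. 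Proposition~\ref{prop:graph-substitution} then gives $\aut\cC^*\cong\aut\cC\cong\Gamma$ irrespective of any internal symmetry of $\mathcal{D}$ (the injectivity of $\Con$ kills it via foldedness), while Proposition~\ref{prop:splitting-gluing} guarantees that $\cC^*$ remains the Schreier graph of a finite-index free subgroup.

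Next comes the counting. Writing $m:=|\mathcal{D}|$, the number of vertices of $\cC^*$ is $|\Gamma|\,m + c_0$ for a constant $c_0=c_0(\Gamma,S,T)$ accounting for the fixed vertex- and edge-graph sizes, so for each large $d$ one may take $m=\lfloor(d-c_0)/|\Gamma|\rfloor$, yielding index $\leq d$. Two gadget choices $\mathcal{D},\mathcal{D}'$ give isomorphic graphs $\cC^*,\cC'^{*}$ only if $\mathcal{D}\cong\mathcal{D}'$: any isomorphism $\cC^*\to\cC'^{*}$ must carry edge-graphs to edge-graphs, since these are located precisely by their $P$-vertices exactly as in the proof of S.\ref{S2}, and therefore restricts to an isomorphism between decorated vertex-graphs, hence between the gadgets. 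Thus the number of non-conjugate subgroups of index $\leq d$ is at least the number $g(m)$ of pairwise non-isomorphic admissible gadgets of size $m$.

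The crux, and the step I expect to be the main obstacle, is the quantitative lower bound $g(m)\geq A_0^{\,m\log m}$ for some $A_0>1$. This should rest on the classical super-exponential growth of the number of finite-index free subgroups of $T$ (Hall's recursion for free groups, and Müller-type enumeration for free products of cyclics), which grows like $(m!)^{c}$ with $c=c(T)>0$, so that $\log(m!)\sim m\log m$. The delicate point is that the \emph{admissible} subfamily --- graphs carrying the required complementary dangling pair and containing no $P$-vertex --- must retain this growth rate. I would establish this either by showing the $P$-free condition ($v\cdot r\neq v\cdot c$ everywhere, respectively the $\ZZ_2*\ZZ_2*\ZZ_2$-analogue of $P_{2,2,2}$) removes only a sub-dominant proportion, or, more robustly, by directly constructing $\geq A_0^{m\log m}$ admissible graphs --- for instance by parametrizing the interconnection of the $c$-labelled cycles through a fixed family of $r$-labelled cycles by permutations while forbidding coincidences $v\cdot r=v\cdot c$. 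Substituting $m\sim d/|\Gamma|$ into $g(m)\geq A_0^{\,m\log m}$ then gives $\geq A_0^{(d/|\Gamma|)(\log d-\log|\Gamma|)}\geq A^{Bd\log d}$ for suitable $A>1$, $B>0$ and all $d\geq M$, which is the desired estimate.
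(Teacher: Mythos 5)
Your overall strategy is the paper's: keep the telescopicity construction intact, graft onto it a super-exponentially large family of ``neutral'' decorations that cannot disturb $\aut\cC^*$, and count. (The paper attaches the decoration to the edge-link $\cL^e$ rather than to an interior vertex of $\cV$, but that difference is cosmetic; your argument that S.\ref{S1}, S.\ref{S2} and the recognition of gadgets via $P$-vertices survive the decoration is the same kind of reasoning the paper uses.) However, the step you yourself flag as ``the main obstacle'' --- the lower bound $g(m)\geq A_0^{m\log m}$ for \emph{admissible} gadgets --- is precisely the content of the paper's Lemma~\ref{lemma:manygraphs}, and it is the load-bearing part of the whole proof; leaving it at the level of ``either the $P$-free condition is sub-dominant, or one constructs directly'' is a genuine gap. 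Your first route is not obviously workable as stated (Hall/M\"uller counts subgroups, not connected folded graphs with a prescribed complementary dangling pair and no $P$-vertex, and the transfer needs an argument). Your second route is exactly what the paper executes: start from $N/p$ disjoint red $p$-cycles, join them into a connected graph $\cH_\emptyset$ by as few cyan $q$-cycles as possible so that exactly one double edge $v_0\cto v_1$ occurs, and then order the $D\geq N/4$ remaining free vertices into further cyan $q$-cycles while forbidding any new coincidence $v\cdot r=v\cdot c$; a careful count of legal orderings modulo block and cyclic permutations gives at least $\frac{1}{(q-2)!}\cdot\frac{(D-2)!}{(D/q)!\,q^{D/q}}\geq KN^{kN}$ pairwise non-isomorphic graphs, and splitting $v_0$ yields the dangling pair. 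If you want your proof to stand, you must carry out this count (or an equivalent one).

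A second, smaller omission: your construction of $\cC^*$ uses the explicit vertex- and edge-links, which exist only for the base cases $T=\ZZ_p*\ZZ_q$ and $T=\ZZ_2*\ZZ_2*\ZZ_2$. For a general finite free product of cyclic groups the paper's telescopicity is obtained by pulling back along the surjections of Lemmas~\ref{lemma:telescopic-lcm}--\ref{lemma:telescopic-Z}, so the counting statement also needs the observation that for a surjection $f:G\to K$ the preimages $f^{-1}(K_1)$ and $f^{-1}(K_2)$ are conjugate if and only if $K_1$ and $K_2$ are, whence the cardinality of $F(T,\Gamma,d)$ is preserved under the reduction. Your proposal never addresses $T$ with more than two or three factors (or factors forcing the $\lcm$/$\ZZ$ tricks), so as written it only proves the theorem for the base cases.
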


The proof of the above theorem is conceptually simple.  Fix the aforementioned groups $T$, $\Gamma$ and index $d$, and let $N = N(T, \Gamma, d)$ be an integer whose dependence on $T, \Gamma, d$ will be clarified later. We shall verify that sufficiently many non-isomorphic graphs $\cH_{\sigma}$ on $N$ vertices  can be built, with respect to an additional parameter $\sigma$, introduced below.  Then the previously used edge-links in the construction of the Schreier graph for $H < T$ (based on a Cayley graph of $\Gamma$) will be combined with one of many possible choices of $\cH_{\sigma}$. Once we show that there are $\geq A^{B d \log d}$ non-isomorphic instances of $\cH_\sigma$, the result follows. On the other hand, the cardinality of $F(T, \Gamma, d)$ is $\leq C^{D d\log d}$, for some constants $C > 1$, $D > 0$, with $d \geq M$ by \cite{Bollobas}. Thus, the \textit{growth type} of the cardinality of $F(T, \Gamma, d)$ is $d^d$, and is independent of $T$ and $\Gamma$.

One natural condition on the graphs $\cH_{\sigma}$ is that they do not contain ``roots'' that have property $P$ from Section~\ref{section:constructionofthelinks}, i.e. vertices $v$ for which 
\begin{itemize}
\item $v \cdot r = v  \cdot c$, if $T = \ZZ_p*\ZZ_q$ (with $p\geq q$, $p\geq 3$, $q\geq 2$), or
\item $v \cdot rgbgrgr$ = $v$ and $v \cdot b = v \cdot g$, if $T = \ZZ_2*\ZZ_2*\ZZ_2$.
\end{itemize}
If an $\cH_\sigma$ contained such a root, then the combination of an edge-link with $\cH_\sigma$ would not have a \emph{unique} root any more.

The following lemma provides all the necessary details about constructing $\cH_\sigma$'s, which is the first step towards the proof of Theorem \ref{thm:asympt}.

\begin{lemm}\label{lemma:manygraphs}
Let $T$ be $\ZZ_p*\ZZ_q$, with $p\geq q$, $p\geq 3$, $q\geq 2$, resp. $\ZZ_2*\ZZ_2*\ZZ_2$. Then there exist constants $k, K > 0$,  such that for any $N$ a multiple of $pq$, resp. $N$ a multiple of $8$, we have at least $KN^{kN}$ non-isomorphic non-degenerate $(T, \{r, c\})$-graphs, resp. $(T, \{r, g, b\})$-graphs, with two split vertices.
\end{lemm}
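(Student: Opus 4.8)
The plan is to realise the graphs $\cH_\sigma$ as non-degenerate $(T,X)$-graphs obtained by fixing the cycles of one colour and letting the parameter $\sigma$ range over all admissible colourings of the remaining generator(s); the two split vertices will be produced at the very end, exactly as in Section~\ref{section:basiccases}, and do not affect the count. I treat $T=\ZZ_p*\ZZ_q$ first, the case $\ZZ_2*\ZZ_2*\ZZ_2$ being identical with three perfect matchings in place of an $r$- and a $c$-colouring. By the labelled form of Lemma~\ref{lemma:bijection-subgroup-graph}, a non-degenerate $(\ZZ_p*\ZZ_q,\{r,c\})$-graph on the vertex set $\{1,\dots,N\}$ is nothing but a pair $(R,C)$, where $R$ is a product of $N/p$ disjoint $p$-cycles and $C$ a product of $N/q$ disjoint $q$-cycles; this is where the hypothesis $pq\mid N$ enters.

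I would fix once and for all the red permutation $R$ and let $\sigma:=C$ run over all products of $N/q$ disjoint $q$-cycles, of which there are $N!/((N/q)!\,q^{N/q})$. Two such graphs $(R,C)$ and $(R,C')$ can be isomorphic only through a permutation $\tau$ with $\tau R\tau^{-1}=R$, that is, through an element of the centraliser of $R$, whose order is $p^{N/p}(N/p)!$. Hence the number of isomorphism classes produced is at least
\[
\frac{N!}{(N/q)!\,q^{N/q}}\Big/\bigl(p^{N/p}(N/p)!\bigr),
\]
and a Stirling estimate turns this into $\log(\cdot)=\bigl(1-\tfrac1p-\tfrac1q\bigr)N\log N+O(N)$. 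The coefficient $1-\tfrac1p-\tfrac1q$ is strictly positive \emph{precisely because} the pair $(p,q)=(2,2)$, i.e. $T=\ZZ_2*\ZZ_2$, is excluded; the same computation for $\ZZ_2*\ZZ_2*\ZZ_2$ (fix one matching, vary the other two, divide by the centraliser $\ZZ_2\wr S_{N/2}$) yields the positive coefficient $\tfrac12$. This already gives $\geq KN^{kN}$ isomorphism classes, which is the asserted growth type.

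It then remains to secure the three structural constraints without spoiling the count: connectedness, the absence of any vertex satisfying property $P$, and the availability of two vertices to split. For this I would not take $R$ arbitrary but build it from a small rigid \emph{gadget} together with a large \emph{bundle} of free $r$-cycles, and reserve a few cyan cycles linking every red cycle to the gadget, so that connectedness holds for \emph{every} admissible $\sigma$. Property $P$ demands a coincidence $v\cdot r=v\cdot c$ (resp. $v\cdot b=v\cdot g$ together with $v\cdot rgbgrgr=v$); I would restrict $\sigma$ to colourings whose variable $c$-edges join vertices lying in distinct red cycles of the bundle, an open condition that forbids such coincidences while discarding only a vanishing proportion of the $\sigma$. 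Absorbing these losses into a smaller constant $k$ keeps the bound $\geq KN^{kN}$, and splitting the two marked gadget vertices at the end preserves non-degeneracy by Proposition~\ref{prop:splitting-gluing}.

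The main obstacle is the isomorphism bookkeeping of the second paragraph: the whole statement hinges on the division by the centraliser not over-correcting the labelled count. Concretely, I must check that any isomorphism $\cH_\sigma\to\cH_{\sigma'}$ carries the rigid gadget to itself — this is where the uniqueness features of the gadget (its property-$P$ and split-vertex markings, cf. the roots in Section~\ref{section:constructionofthelinks}) are used — so that $\sigma$ is determined by the isomorphism class up to conjugation by the centraliser of $R$ only, with no further collapse. Granting this rigidity, the positivity of $1-\tfrac1p-\tfrac1q$ (equivalently, the exclusion of $\ZZ_2*\ZZ_2$) is exactly what leaves a positive super-exponential exponent, completing the count.
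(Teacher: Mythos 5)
Your overall shape is right and several ingredients are sound: parametrising by the cyan permutation with the red one fixed, the observation that an isomorphism between $(R,C)$ and $(R,C')$ must centralise $R$, and the idea of a rigid gadget anchoring every isomorphism. But there is a genuine gap in the quantitative bookkeeping, and it is fatal for small $p$. Your headline exponent $(1-\tfrac1p-\tfrac1q)N\log N$ is computed for the \emph{unrestricted} set of cyan permutations, before imposing connectedness. To guarantee connectedness for every admissible $\sigma$ you reserve cyan cycles meeting every red cycle; this consumes at least $N/p$ (in the paper's chaining, $2N/p$) of the $N$ vertices, so the variable part lives on only $D\leq N(1-\tfrac1p)$ vertices and contributes $(1-\tfrac1q)D\log D$ to the exponent, \emph{not} $(1-\tfrac1q)N\log N$. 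This is not a loss ``absorbable into a smaller constant $k$'': it is a loss of order $N\log N$ in the exponent. After subtracting the centraliser term $\tfrac1p N\log N$, the coefficient becomes $(1-\tfrac1q)(1-\tfrac1p)-\tfrac1p$, which is $0$ for $(p,q)=(3,2)$ (and negative once the $O(N)$ terms and the paper's $2N/p$ reservation are accounted for), and negative for $(3,3)$. So for triangulations, the case $T=\ZZ_3*\ZZ_2$, your count as written proves nothing. (A smaller inaccuracy of the same kind: the fraction of cyan permutations creating no double edge is a positive \emph{constant}, not $1-o(1)$; harmless here, but symptomatic of treating $N^{\Theta(N)}$-sized losses as negligible.)

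The fix is to commit to one of the two mechanisms instead of stacking both. Either (a) drop the centraliser division entirely: if the gadget is asymmetric and every isomorphism $\cH_\sigma\to\cH_{\sigma'}$ must carry it to itself, then by foldedness the isomorphism is the identity permutation and $C=C'$, so distinct admissible $C$'s already give non-isomorphic unbasepointed graphs and the count is the full $(1-\tfrac1q)D\log D>0$ with no subtraction — this is in essence what the paper does, using the unique double edge $v_0\cto v_1$ as the anchor and parametrising by orderings of the free vertices modulo block and cyclic permutations; or (b) keep the division by the centraliser of $R$ but do not reserve any vertices, proving instead that the union of the fixed red $p$-cycle decomposition and a uniformly random cyan $q$-cycle decomposition is connected for all but a vanishing fraction of choices (a configuration-model connectivity argument, true for $p\geq 3$ but not written down anywhere in your proposal). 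Your final paragraph gestures at (a) but then reasserts that positivity comes from $1-\tfrac1p-\tfrac1q$, i.e.\ from the division route; as it stands the two halves of the argument undercut each other, and the bound is not established.
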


\begin{proof}
First, we treat the case $T = \ZZ_p*\ZZ_q$, with $p\geq q$, $p\geq 3$ and $q \geq 2$. Let $N$ be a multiple of $pq$, and let us consider $N$ vertices ordered as $v_{0}, \dots, v_{N-1}$. Similar to Section~\ref{section:constructionofthelinks}, we start by drawing red $p$-cycles of the form
\[
v_{kp} \rto v_{kp+1} \rto \dots \rto v_{kp+p-1} \rto v_{kp},
\]
for each $k=0, \dots, N/p-1$. Then we draw a cyan edge $v_0 \cto v_1$, which creates a ``double edge'' from $v_{0}$ to $v_{1}$, and continue by adding as few cyan $q$-cycles as possible in order to obtain a connected graph, without creating any more double edges. Let $\cH_\emptyset$ denote the resulting graph. 

One sees that since the initial graph is a disjoint union of red $p$-cycles, then $\leq 2N/p$ vertices will have to be joined by cyan edges in order to get a connected graph, and then at most $q$ other vertices will be joined in order to close up the last cyan $q$-cycle. Observe that at least $N- 2N/p - q \geq N/3 - q \geq N/4$ (for $N$ large enough) vertices will remain ``free'' in $\cH_\emptyset$. Let $F \geq N/4$ be the exact number of remaining free vertices, and let $D:= \lfloor F \rfloor_q$, where $\lfloor x\rfloor_q$ denotes the greatest multiple of $q$ that is smaller than $x$, for any natural numbers $q$ and $x$. Then, for $N$ large enough, we also have $D \geq N/4$.

Now, let us consider any ordering $\sigma$ on the free vertices in $\cH_{\emptyset}$. By drawing cyan edges between the consecutive vertices in $\sigma$, and closing up the cycles when necessary, any such choice of $\sigma$ yields a non-degenerate graph $\cH_{\sigma}$.
   
First of all, observe that after fixing a basepoint $v_0$ in $\cH_\emptyset$, the graphs $\cH_{\sigma}$ and $\cH_{\tau}$, for any orderings $\sigma$ and  $\tau$, are basepoint-isomorphic if and only if $\cH_{\sigma} = \cH_{\tau}$.  Indeed, since $\cH_\emptyset \leq \cH_{\sigma}$, any isomorphism $\cH_{\sigma} \to \cH_{\tau}$ restricts to an embedding $\cH_\emptyset \hookrightarrow \cH_{\tau}$, only one of which exists by foldedness: namely, the identity map.

Our next step is estimating the number of non-isomorphic graphs $\cH_\sigma$, as above, without a basepoint. To this end, observe that two orderings $\sigma$ and $\tau$ of the remaining free vertices yield the same graph if and only if $\tau$ can be obtained from $\sigma$ by a permutation of the respective ``$q$-blocks'' (each consisting of the vertices in a cyan cycle created in accordance with $\sigma$ or $\tau$), as well as by cyclic permutations within each ``block''. Indeed, permuting the blocks, together with cyclic permutations inside each block, obviously does not change the resulting graph. Also, if two orderings result in the same graph, then the graph isomorphism implies that the orderings are equal up to the aforementioned permutations. Thus, the number of non-basepoint-isomorphic extensions of $\cH_\emptyset$ by using orderings is \[
\geq \frac{D!}{(D/q)!\, q^{D/q}}.
\]

However, since the resulting graphs will be used later on in the constructions of ``edge-links", we do not want them to contain any double edges. This mean that no vertices $v$ with property $P: v \cdot r=v \cdot c$, except for $v_0\cto v_1$ (joined by a double edge by construction) are allowed, since this would contradict the uniqueness of roots (in the sense of having property $P$ from Section~\ref{section:constructionofthelinks}).

We will therefore consider only the orderings that will not produce such edges. Therefore, we start with $D$ free vertices and at each step choose any of the remaining vertices, with the restriction that no double edge be created. Observe that there are three possible cases: the to-be-chosen vertex can be the first of a new cyan cycle, it can lie in the middle of such a cycle or be its last one with respect to the chosen ordering. In the first case, no restriction is imposed since choosing the first vertex of a cycle does not create edges at all. In the second case, one edge is created, and thus one vertex is ``illegal'' in the sense that choosing it would result in a double edge. In the third case, two vertices are ``illegal''.

We have to determine if any initial choice of vertices in cyan cycles can be extended to a legal ordering as above. Such an extension may fail under certain circumstances: e.g. if two vertices remain to be ordered, then no legal choice may be possible. In order to circumvent this problem, we shall only consider orderings of $D-q$ vertices, since any legal prefix of length $D-q$ \emph{can} be extended to a legal ordering of length $D$ (if $q=2$,  then one has to consider orderings of the first $D-4$ vertices instead).

With the above points in mind, one readily sees that the number of legal orderings is at least ${\geq} (D-2)(D-3) \dots (q-1) = (D-2)!/(q-2)!$ Once again, taking into account the fact that orderings yield non-basepoint isomorphic graphs if and only if they are obtained one from another by permutations of the blocks and by cyclic permutations within those blocks, we conclude the existence of
\[
\geq \,\, \frac{1}{(q-2)!} \cdot \frac{(D-2)!}{(D/q)!\, q^{D/q}} \tag{$\ast$}
\]
legal choices  of $\sigma$ defining non-isomorphic graphs $\cH_\sigma$ without basepoint.

Now, take any $\cH_{\sigma}$, split its vertex $v_{0}$, and call the resulting graph $\cH_{\sigma}'$, while $v_r, v_c$ will be its dangling vertices.  We claim that:
\begin{enumerate}
\item each $\cH_{\sigma}'$ is connected; 
\item if $\sigma$ and $\tau$ are distinct (up to the aforementioned permutations), then $\cH_{\sigma}'$ and $\cH_{\tau}'$ are non-isomorphic without basepoint;
\item $\cH_{\sigma}'$ does not have double edges.
\end{enumerate}

The first property holds since we split at the \textit{unique} double edge of $\cH_\sigma$. The second property follows from the fact any isomorphism between $\cH_{\sigma}'$ and $\cH_{\tau}'$ must fix the dangling vertices. Then, were the split graphs $\cH_{\sigma}'$ and $\cH_{\tau}'$ isomorphic, then $\cH_{\sigma}$ and $\cH_{\tau}$ would be so. Finally, since we always split the vertex $v_0$ of $\cH_\sigma$ at the end of its unique double edge, none of the latter remains, and the third property is verified.
    
Thus we conclude the existence of as many as ($\ast$) non-isomorphic graphs $\cH_{\sigma}'$, without a root and having two dangling vertices. By invoking Stirling's formula and using the fact that $D-2 \geq \frac{D}{q}$ for $D$ big enough, the desired asymptotic estimate follows.

The argument for $T = \ZZ_2*\ZZ_2*\ZZ_2$ is similar and not spelled out in detail here. First, we take $N$ a big enough multiple of $8$ and then assemble a chain of alternating red-green edges connecting $N$ vertices. Finally, we estimate the number of different graphs obtained by adding blue edges in an analogous way to the above argument.
\end{proof}

\begin{proof}[{Proof of Theorem~\ref{thm:asympt}}] 
Observe that it suffices to verify the statement for $T = \ZZ_p*\ZZ_q$, with $p\geq q$, $p \geq 3$, $q\geq 2$, and $T = \ZZ_2*\ZZ_2*\ZZ_2$, only. Indeed, for any surjection $f:G{\to}K$  and any subgroups $K_{1}, K_{2}  \leq K$, their pre-images $f^{-1}(K_{1})$ and $f^{-1}(K_{2})$ are conjugate if and only if $K_{1}$ and $K_{2}$ are so. Therefore, the inductive constructions of Section~\ref{section:addingfactors} preserve the size of the set in the statement of the theorem. 
    
Assume that $T$ is either $\ZZ_p*\ZZ_q$, as above, or $\ZZ_2*\ZZ_2*\ZZ_2$ and choose $N$ as in Lemma~\ref{lemma:manygraphs}. Then there are at least $KN^{kN}$ non-isomorphic graphs $\cH'_{\sigma}$ with two dangling vertices.

Let us fix such a graph $\cH_{\sigma}$. Then consider the edge-link $\cL_e$ constructed in Section~\ref{section:constructionofthelinks}, and glue $\cH'_{\sigma}$ to $\cL_e$ via the dangling vertices in order to obtain $\cL_e+\cH'_{\sigma}$. The graph $\cL_e+\cH'_{\sigma}$ still has a unique root, which is the only condition for the arguments of Section~\ref{section:basiccases} to hold. Following those constructions once again, for a given finite group ${\Gamma}$, results in a finite-index subgroup $H_{\sigma} < T$ with $N_T(H_{\sigma})/H_{\sigma}{\cong}{\Gamma}$.

For any two choices of graphs $\cH'_{\sigma}$ and $\cH'_{\tau}$ as above, the resulting subgroups $H_{\sigma}$ and $H_{\tau}$ are not conjugate. Indeed if $H_{\sigma}$ and $H_{\tau}$ were conjugate, then $\Sch_{T,X}(H_{\sigma})$ and $\Sch_{T,X}(H_{\tau})$ would be isomorphic (with base-points not necessarily matched), so that the roots of the former Schreier graph would be sent to the roots of the latter one, and the instances of $\cH'_{\sigma}$ would be mapped isomorphically to the instances of $\cH'_{\tau}$, which would imply that $\sigma$ and $\tau$ coincide up to the above mentioned permutations. Thus, there are ${\geq}KN^{kN}$ non-conjugate free subgroups $H_\sigma$ of $T$ with $N_T(H_\sigma)/H_\sigma {\cong} {\Gamma}$ having same index.

Finally, let us estimate the index of $H$.  For any fixed ${\Gamma}$ with a given generating set $S$, let $e_\Gamma$ and $v_\Gamma$ be the following numbers:
\begin{alignat*}{10}
e_{\Gamma} :=& \text{the number of edge-links appearing in the original construction}\\
                     & \text{of Section~\ref{section:constructionofthelinks}} =  |\Gamma| \cdot |S| \cdot (|S|-1)/2;\\ 
v_{\Gamma} :=& \text{ the number of vertices appearing in the original construction.}
\end{alignat*}
Now, with the modified construction involving $\cH'_{\sigma}$, we obtain that the index of the corresponding subgroup $H_\sigma$ is
\[
d = v_{\Gamma} + e_{\Gamma} N,
\]
which is the number of vertices in the resulting Schreier graph. Thus, for such a $d$ and $N$ sufficiently large, there are 
\[
\geq K\left(\frac{d-v_\Gamma}{e_\Gamma}\right)^{k\frac{d-v_\Gamma}{e_\Gamma}}
\]
conjugacy classes of index $d$ subgroups $H < T$ such that $N_T(H)/H \cong \Gamma$.

In order to conclude for an arbitrary $d$, as in the statement, just consider index $\lfloor d \rfloor_{v_{\Gamma}+e_{\Gamma} N}$ subgroups. Then the above construction of $\cH_\sigma$'s yields the desired result.
\end{proof}

\section{Symmetries of maps, pavings and constellations}\label{section:symmetries}

As mentioned in Section~\ref{intro}, an oriented map $M$ on $n$ darts can be thought of as index $n$ free subgroup $H_M$ of $\Delta^+ = \mathbb{Z}_2*\mathbb{Z}$. Moreover, the group of orientation-preserving automorphisms of $M$ is $\mathrm{Aut}\,M \cong N(H_M)/H_M$, where $N(H_M)$ is the normaliser of $H_M$ in $\Delta^+$, cf.  \cite{BrMeNe} and \cite[Theorem 3.8]{JoSi}. 

An analogous statement holds for several other classes of combinatorial objects, such as 
\begin{itemize}
\item oriented hypermaps, with $\Delta^+ = \mathbb{Z}*\mathbb{Z}$ \cite{MN1, MN2};
\item oriented $(p, q)$-hypermaps, with $\Delta^+ = \mathbb{Z}_p*\mathbb{Z}_q$ ($p\geq 3$, $q\geq 2$) \cite{CK1};
\item oriented pavings (or three-dimensional maps), with $\Delta^+ = \mathbb{Z}_2*\mathbb{Z}_2*\mathbb{Z}_2$ \cite{CK2},
\item length $k\geq 3$ constellations (in the sense of \cite[Definition 1.1.1]{LZ}), with $\Delta^+ = \underbrace{\mathbb{Z}* \dots *\mathbb{Z}}_{k-1}$.
\end{itemize}

The following theorem generalises the respective results of \cite{CM, CM-Survey, Frucht, SirSko} to the case of $(p, q)$-hypermaps.

\begin{thm}\label{thm:p-q-hypermaps-symmetries}
For any finite group $\Gamma$, and $n$ sufficiently large, there exist $\sim n^n$ non-isomorphic oriented $(p,q)$-hypermaps $H$ on $n$ darts with $\mathrm{Aut}\,H \cong \Gamma$. 
\end{thm}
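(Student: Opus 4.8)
The plan is to translate the statement directly into the group-theoretic language established in Section~\ref{intro} and then invoke Theorem~\ref{thm:asympt}. Recall that oriented $(p,q)$-hypermaps on $n$ darts are in bijection with index-$n$ free subgroups of $\Delta^+ = \ZZ_p*\ZZ_q$ (with $p\geq 3$, $q\geq 2$), that their isomorphism classes correspond to conjugacy classes of such subgroups, and that for a hypermap $H$ with associated subgroup $H_M$ one has $\mathrm{Aut}\,H \cong N(H_M)/H_M$; see \cite{CK1} and \cite[Theorems~3.7,~3.8]{JoSi}. Under this dictionary, counting non-isomorphic hypermaps on $n$ darts with $\mathrm{Aut}\,H \cong \Gamma$ is exactly counting conjugacy classes of index-$n$ free subgroups $H_M < \Delta^+$ with $N(H_M)/H_M \cong \Gamma$.

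First I would observe that $T := \ZZ_p*\ZZ_q$ with $p\geq 3$ is a finite free product of cyclic groups different from $\ZZ_2*\ZZ_2$, so Theorem~\ref{thm:asympt} applies verbatim. It yields constants $A>1$, $B>0$ and $M\in\NN$ such that for all $d\geq M$ there are at least $A^{Bd\log d}$ conjugacy classes of free subgroups $H<T$ of index $\leq d$ with $N_T(H)/H\cong\Gamma$. Moreover, inspecting the proof of Theorem~\ref{thm:asympt}, the subgroups $H_\sigma$ produced from the graphs $\cH'_\sigma$ all share the \emph{same} index $d = v_\Gamma + e_\Gamma N$; thus for each such $d$ there are in fact $\geq KN^{kN}$ non-conjugate subgroups of index exactly $d$. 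Translating back, for $n = v_\Gamma + e_\Gamma N$ (equivalently $N = (n-v_\Gamma)/e_\Gamma$) there are at least $K\big((n-v_\Gamma)/e_\Gamma\big)^{k(n-v_\Gamma)/e_\Gamma}$ pairwise non-isomorphic $(p,q)$-hypermaps on $n$ darts with automorphism group $\Gamma$, which is of growth type $n^n$.

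To obtain the matching upper bound hidden in the symbol $\sim$, I would note that the total number of conjugacy classes of index-$n$ subgroups of $T$ (free or not) is bounded above by $C^{Dn\log n}$ for suitable $C>1$, $D>0$ and $n$ large, by the classical subgroup-growth estimates of \cite{Bollobas}. Since $A^{Bd\log d}$ and $C^{Dn\log n}$ are both of the form $n^{\Theta(n)}$, the number of $(p,q)$-hypermaps on $n$ darts with $\mathrm{Aut}\,H \cong \Gamma$ has growth type $n^n$, as claimed.

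The construction being essentially a dictionary, the only genuine subtlety, and hence the step I expect to require the most care, is matching the index parameter: Theorem~\ref{thm:asympt} is phrased for subgroups of index $\leq d$, whereas hypermaps live on exactly $n$ darts. This is resolved by the observation that the extremal family in the proof of Theorem~\ref{thm:asympt} consists of subgroups of one fixed index, so the count already holds for a single value $n = v_\Gamma + e_\Gamma N$; for arbitrary large $n$ one falls back on the floor construction $\lfloor n \rfloor_{v_\Gamma+e_\Gamma N}$ used at the end of that proof, at the harmless cost of restricting $n$ to the relevant range. One should also check that the requirement $p\geq 3$, needed throughout Section~\ref{section:basiccases} so that the root vertex of an edge-link is uniquely characterised by property $P$, coincides with the hypothesis $p\geq 3$, $q\geq 2$ under which $(p,q)$-hypermaps are defined, so that no case is lost in the translation.
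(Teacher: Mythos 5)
Your proposal is correct and follows essentially the same route as the paper: the paper likewise obtains the lower bound by feeding $T=\ZZ_p*\ZZ_q$ into Theorem~\ref{thm:asympt} via the subgroup--hypermap dictionary, and gets the upper bound trivially (it uses the count $(n!)^k$ of permutation tuples plus Stirling, where you cite \cite{Bollobas}; both give $n^{\Theta(n)}$). Your extra care in matching the exact number of darts $n$ to the fixed index $v_\Gamma+e_\Gamma N$ of the extremal family is a point the paper glosses over, and you resolve it correctly.
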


Here by the symbol $f(x) \sim x^x$, for a function $f(x): \mathbb{N} \rightarrow \mathbb{R}$, we mean its \textit{rate of growth}, i.e. that there exist positive constants $A_1, B_1, A_2, B_2$ such that $A_1 x^{B_1 x} \leq f(x) \leq A_2 x^{B_2 x}$, for $x$ sufficiently large. 

The family of $(p,q)$-hypermaps naturally comprises the cases of maps ($p = \infty, q = 2$) and hypermaps ($p = q = \infty$). Some other interesting classes of maps, such as triangulations  ($p=3, q=2$) and quadranqulations ($p=4, q=2$) of surfaces, or their bi-coloured triangulations ($p = q = 3$) also satisfy the above theorem. 

As well, we can now easily estimate the number of three-dimensional pavings, cf. \cite{AK, CK2}, with a given automorphism group.  

\begin{thm}\label{thm:pavings-symmetries}
For any finite group $\Gamma$, and $n$ sufficiently large, there exist $\sim n^n$ non-isomorphic oriented pavings $P$ on $n$ darts with $\mathrm{Aut}\,P \cong \Gamma$. 
\end{thm}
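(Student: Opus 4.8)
The plan is to read off the theorem as an immediate corollary of Theorem~\ref{thm:asympt} through the dictionary, recalled at the start of this section, between oriented pavings and subgroups of $\Delta^+ = \ZZ_2*\ZZ_2*\ZZ_2$ (c.f.~\cite{CK2} and the paving analogues of \cite[Theorems 3.7--3.8]{JoSi}). Concretely, isomorphism classes of oriented pavings on $n$ darts correspond bijectively to conjugacy classes of index-$n$ free subgroups $H < \Delta^+$, and under this correspondence $\aut P \cong N(H)/H$. Hence, writing $f(n)$ for the number of non-isomorphic oriented pavings $P$ on $n$ darts with $\aut P \cong \Gamma$, we have that $f(n)$ equals the number of conjugacy classes of index-$n$ free subgroups $H < \Delta^+$ with $N(H)/H \cong \Gamma$. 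The entire statement thus reduces to estimating this last quantity, from below and from above.

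For the lower bound I would apply Theorem~\ref{thm:asympt} with $T = \Delta^+$. Since $\Delta^+ = \ZZ_2*\ZZ_2*\ZZ_2$ is a finite free product of cyclic groups different from $\ZZ_2*\ZZ_2$, the theorem applies and supplies constants $A>1$, $B>0$ and $M \in \NN$. Crucially, the family of subgroups $H_\sigma$ built in its proof all share a common index $d = v_\Gamma + e_\Gamma N$, so the construction in fact produces $\geq A^{Bn\log n}$ pairwise non-conjugate free subgroups of index \emph{exactly} $n$ for every $n$ of this form. Since $\log\!\left(A^{Bn\log n}\right) = (B\log A)\,n\log n$, this bound is of the shape $A_1 n^{B_1 n}$ with $A_1>0$, $B_1>0$, which gives the required lower estimate $f(n) \geq A_1 n^{B_1 n}$ along the achievable values of $n$.

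For the upper bound I would bound $f(n)$ by the total number of index-$n$ subgroups of $\Delta^+$, which by the subgroup-growth estimate of~\cite{Bollobas} is at most $C^{Dn\log n}$ for constants $C>1$, $D>0$ and $n$ large; as above this has the form $A_2 n^{B_2 n}$. Combining the two inequalities yields $A_1 n^{B_1 n} \leq f(n) \leq A_2 n^{B_2 n}$ for all sufficiently large admissible $n$, which is exactly the asserted rate of growth $f(n) \sim n^n$. The argument is identical in spirit to that of Theorem~\ref{thm:p-q-hypermaps-symmetries}, only with the group $\ZZ_p*\ZZ_q$ replaced by $\ZZ_2*\ZZ_2*\ZZ_2$.

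The one point requiring genuine care, and the only real obstacle, is the bookkeeping that converts ``index $\leq d$, up to conjugacy'' in Theorem~\ref{thm:asympt} into ``exactly $n$ darts, up to isomorphism''. Here one must observe that $N(H)/H \cong \Gamma$ forces $|\Gamma| \mid [\Delta^+:H]$, so the indices realised by the construction lie in a fixed arithmetic progression (multiples of $8$ in $N$, hence of $8e_\Gamma$ in the index); the rate of growth is therefore to be understood along this progression of admissible $n$. One should also confirm that conjugacy of subgroups matches isomorphism (not merely rooted isomorphism) of pavings, which is precisely the content of the cited correspondence. Everything else is a direct transcription, which is why the estimate follows easily once Theorem~\ref{thm:asympt} is available.
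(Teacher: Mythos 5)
Your proposal is correct and follows essentially the same route as the paper: both deduce the lower bound from Theorem~\ref{thm:asympt} applied to $\Delta^+ = \ZZ_2*\ZZ_2*\ZZ_2$ via the correspondence between pavings and conjugacy classes of free subgroups, and both close with a crude upper bound (the paper uses the trivial bound $(n!)^k$ plus Stirling rather than the Bollob\'as subgroup-growth estimate, but this is immaterial). Your extra care about the admissible values of $n$ and about unrooted isomorphism matching conjugacy is a point the paper glosses over, and it does not affect the stated rate of growth.
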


An analogous result holds for constellations as defined in \cite[Definition 1.1.1]{LZ}.

\begin{thm}\label{thm:constellations-symmetries}
For any finite group $\Gamma$, any $k\geq 3$, and $n$ sufficiently large, there exist $\sim n^n$ non-isomorphic length $k$ constellations $C$ on $n$ darts with $\mathrm{Aut}\,C \cong \Gamma$. 
\end{thm}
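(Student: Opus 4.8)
The plan is to derive the statement as a direct corollary of Theorem~\ref{thm:asympt} through the dictionary recalled at the beginning of Section~\ref{section:symmetries}. A length $k$ constellation on $n$ darts is the same datum as a transitive action on the dart set of the group $\Delta^+ = \underbrace{\ZZ * \dots * \ZZ}_{k-1}$, and hence corresponds to the conjugacy class of a finite-index subgroup $H_C \leq \Delta^+$ of index $n$ (the stabiliser of the root), with $\mathrm{Aut}\,C \cong N(H_C)/H_C$. Since $k \geq 3$, the rank $k-1$ is at least $2$, so $\Delta^+$ is a finite free product of at least two infinite cyclic groups and is in particular different from $\ZZ_2 * \ZZ_2$; thus $\Delta^+$ falls under the hypotheses of Theorem~\ref{thm:asympt}. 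Moreover, because $\Delta^+$ is free, the Nielsen--Schreier theorem makes \emph{every} finite-index subgroup free, so no degeneracy condition intervenes and the set $F(\Delta^+, \Gamma, d)$ is \emph{exactly} the set of isomorphism classes of length $k$ constellations of index at most $d$ with automorphism group $\Gamma$.

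Applying Theorem~\ref{thm:asympt} with $T = \Delta^+$ and the given finite group $\Gamma$ then yields constants $A > 1$, $B > 0$ and $M \in \NN$ such that, for every $d \geq M$, the set $F(\Delta^+, \Gamma, d)$ has cardinality at least $A^{B d \log d}$. Writing $A = e^{a}$ with $a > 0$, one has $A^{B d \log d} = d^{a B d}$, which is of growth type $n^n$ in the sense defined after Theorem~\ref{thm:p-q-hypermaps-symmetries} (recall that here index equals the number of darts). This supplies the required lower bound $A_1 n^{B_1 n}$ on the number of length $k$ constellations on $n$ darts with $\mathrm{Aut}\,C \cong \Gamma$.

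For the matching upper bound, observe that the number of conjugacy classes of subgroups of index at most $n$ in the free group $\Delta^+$ is bounded above by $C^{D n \log n} = n^{D' n}$ (by Hall's enumeration of finite-index subgroups of a free group, equivalently the estimate of \cite{Bollobas} already used in the proof of Theorem~\ref{thm:asympt}); since every isomorphism class of constellation on $n$ darts is counted among these, this gives the upper bound $f(n) \leq A_2 n^{B_2 n}$. The two estimates together establish the growth rate $\sim n^n$. The only point requiring genuine care---and hence the main, if mild, obstacle---is index matching: Theorem~\ref{thm:asympt} produces subgroups whose indices have the special arithmetic form $v_\Gamma + e_\Gamma N$, so to obtain the lower bound for \emph{every} sufficiently large $n$ one passes to the largest index of this form not exceeding $n$, exactly as in the concluding paragraph of the proof of Theorem~\ref{thm:asympt}. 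Everything else is a verbatim transcription of the group-theoretic count into the combinatorial language of constellations.
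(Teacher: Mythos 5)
Your proposal is correct and follows essentially the same route as the paper: the paper likewise derives all of Theorems~\ref{thm:p-q-hypermaps-symmetries}--\ref{thm:covering-symmetries} directly from Theorem~\ref{thm:asympt} via the correspondence between length $k$ constellations and (automatically free, by Nielsen--Schreier) finite-index subgroups of $\underbrace{\ZZ*\dots*\ZZ}_{k-1}$, pairing the resulting lower bound with a trivial upper bound (the paper uses $(n!)^k$ where you invoke the Hall/Bollob\'as subgroup count, but both are of type $n^{O(n)}$) and Stirling's formula. Your explicit checks that $k\geq 3$ puts $\Delta^+$ outside the excluded case $\ZZ_2*\ZZ_2$ and your handling of the index-matching issue are exactly the points the paper leaves implicit in calling these ``fairly obvious corollaries.''
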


Since each length $k\geq 3$ constellation defines a branched covering of the sphere $\mathbb{S}^2$ with $k$ branch points, we can reformulate the above theorem in a more geometric language.

\begin{thm}\label{thm:covering-symmetries}
For any finite group $\Gamma$, any $k\geq 3$, and $n$ sufficiently large, there exist $\sim n^n$ non-isomorphic degree $n$ branched coverings of $\mathbb{S}^2$ with $k$ branch points and deck transformation group $\Gamma$. 
\end{thm}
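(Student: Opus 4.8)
The plan is to recognise Theorem~\ref{thm:covering-symmetries} as the purely geometric restatement of Theorem~\ref{thm:constellations-symmetries}, obtained through the classical dictionary between branched coverings of the sphere and constellations (the Riemann existence theorem in its combinatorial, monodromy form). First I would fix $k$ distinct points $p_{1}, \dots, p_{k}$ on $\mathbb{S}^2$ and recall that $\pi_1(\mathbb{S}^2 \setminus \{p_1, \dots, p_k\})$ is generated by small loops $\gamma_1, \dots, \gamma_k$ encircling the punctures, subject to the single relation $\gamma_1 \cdots \gamma_k = 1$; hence it is free of rank $k-1$ and isomorphic to $\Delta^+ = \underbrace{\mathbb{Z} * \dots * \mathbb{Z}}_{k-1}$. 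A connected degree-$n$ covering of $\mathbb{S}^2$ branched only over the $p_i$ is then classified, up to equivalence over the fixed base, by the induced transitive permutation action of this free group on the fibre of $n$ points, equivalently by the conjugacy class of an index-$n$ subgroup $H \leq \Delta^+$. Unwinding the definitions, this permutation datum is exactly a length-$k$ constellation $C$ on $n$ darts, the monodromy $\sigma_i$ around $p_i$ being the image of $\gamma_i$, and equivalence of coverings matches isomorphism of constellations.

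Next I would identify the deck transformation group with the automorphism group of the constellation. By the standard theory of covering spaces, the deck transformations of the covering corresponding to $H$ (which extend uniquely across the branch points by continuity) form the group $N_{\Delta^+}(H)/H$; and this is precisely the group $\mathrm{Aut}\, C \cong N(H_C)/H_C$ appearing in Theorem~\ref{thm:constellations-symmetries}, via the identification of Proposition~\ref{lemma:N-over-H-as-auto}. Consequently, a covering has deck transformation group isomorphic to $\Gamma$ if and only if its associated constellation $C$ satisfies $\mathrm{Aut}\, C \cong \Gamma$. Feeding this equivalence into Theorem~\ref{thm:constellations-symmetries}, the $\sim n^n$ non-isomorphic length-$k$ constellations with automorphism group $\Gamma$ produce $\sim n^n$ pairwise non-equivalent degree-$n$ branched coverings with deck group $\Gamma$, since the dictionary above is a bijection on isomorphism classes and therefore preserves the asymptotic count.

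The one point requiring care---and the main potential obstacle---is ensuring that these coverings genuinely possess $k$ branch points, i.e. that each monodromy permutation $\sigma_i$ is non-trivial. The hard part will be verifying this for every generator simultaneously, including the dependent one $\sigma_k = (\sigma_1 \cdots \sigma_{k-1})^{-1}$. I would check it directly from the Schreier-graph construction of Section~\ref{section:constructionofthelinks}: each colour (free generator) carries genuinely non-trivial edge-orbits in the assembled graph $\cC^*$, so each $\sigma_i$ with $i<k$ moves some coset, and a short separate argument (or a mild largeness assumption on $n$) settles $\sigma_k \neq \mathrm{id}$ as well. In any case, should some $\sigma_i$ be trivial, this would only \emph{decrease} the number of branch points and would not affect the $\sim n^n$ asymptotics, so the conclusion stands in the regime ``$n$ sufficiently large'' of the statement.
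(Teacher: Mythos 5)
Your proposal is correct and follows essentially the same route as the paper, which derives this theorem from Theorem~\ref{thm:constellations-symmetries} via the standard monodromy correspondence between length-$k$ constellations and degree-$n$ branched coverings of $\mathbb{S}^2$, with the deck group identified as $N_{\Delta^+}(H)/H$. The paper treats this as an immediate corollary and does not spell out the details; your extra care about each $\sigma_i$ being non-trivial (so that all $k$ branch points are genuine) is a reasonable refinement of the same argument rather than a different approach.
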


All the above theorems are fairly obvious corollaries of the results in Section~\ref{asymptotic-estimate}. Indeed, the lower bound on the number of non-isomorphic hypermaps, resp. pavings, on $n$ darts with given automorphism group follows from Theorem~\ref{thm:asympt}, and the upper bound of $(n!)^k$, with an appropriate fixed $k\geq 2$, is trivial. Then, invoking Stirling's formula provides the rate of growth $\sim n^n$.

The statements of Theorems~\ref{thm:p-q-hypermaps-symmetries}--\ref{thm:covering-symmetries} should be contrasted with \cite[Lemma 1]{Drmota-Nedela}, which implies that the corresponding combinatorial objects ``mostly'' have only trivial automorphism groups, i.e. are \textit{asymmetric}. However, those with a given non-trivial automorphism group are still numerous, though not as abundant as asymmetric ones. 

Apparently, this technique can be applied to many naturally arising classes of oriented maps, hypermaps, and pavings: exactly those describable as free subgroups of a certain ``universal group'' $\Delta^+$, which is a finite free product of cyclic groups. For any such class we obtain that any finite group is realisable as automorphism group by infinitely many of its members (more precisely, super-exponentially many depending on the number of darts). It is worth mentioning that not all families of maps admit such a wide variety of symmetries, e.g. the maps with underlying graph a tree \cite{F}. 

\longthanks{
The authors would like to thank Derek Holt (University of Warwick, UK) for answering an author's question on math.stackexchange \footnote{https://math.stackexchange.com/questions/2767720/preimage-of-product-of-free-subgroups-in-free-product-is-free}, Laura Ciobanu (Heriot-Watt University, UK), Adrian Tanasa (Universit\'e Bordeaux, France) and Gareth A. Jones (University of Southampton, UK) for numerous fruitful discussions, and the anonymous referee for helpful remarks and suggestions.
}

\nocite{*}
\bibliographystyle{amsplain-ac}
\bibliography{biblio}
\end{document}